\providecommand{\U}[1]{\protect\rule{.1in}{.1in}}
\newtheorem{theorem}{Theorem}
\newtheorem{corollary}{Corollary}
\newtheorem{lemma}{Lemma}
\newtheorem{proposition}{Proposition}
\newtheorem{remark}{Remark}
\begin{document}
\title[Complements of Unions: Insights on Spaceability and Applications]{Complements of Unions: Insights on Spaceability and Applications}
\author[Ara\'{u}jo]{G. Ara\'{u}jo}
\address[G. Ara\'{u}jo]{Departamento de Matem\'{a}tica \\
Universidade Estadual da Para\'{\i}ba \\
58.429-500 Campina Grande, Brazil.}
\email{gustavoaraujo@servidor.uepb.edu.br}
\author[Barbosa]{A. Barbosa}
\address[A. Barbosa]{Departamento de Matem\'atica \\
Universidade Federal da Para\'iba \\
58.051-900 Jo\~ao Pessoa, Brazil.}
\email{afsb@academico.ufpb.br}
\author[Raposo Jr.]{A. Raposo Jr.}
\address[A. Raposo Jr.]{Departamento de Matem\'atica \\
Universidade Federal do Maranh\~ao \\
65.085-580 S\~ao Lu\'is, Brazil.}
\email{anselmo.junior@ufma.br}
\author[G. Ribeiro]{G. Ribeiro$^{\ast}$}
\address[G. Ribeiro]{Departamento de Matem\'{a}tica \\
Universidade Federal da Para\'iba \\
58.051-900 Jo\~ao Pessoa, Brazil.}
\email{geivison.ribeiro@academico.ufpb.br}
\thanks{$^{*}$Corresponding author.}
\thanks{The first author acknowledges partial support from Grant 3024/2021, provided
by the Para\'{\i}ba State Research Foundation (FAPESQ). The third author
acknowledges support from Grant 406457/2023-9, funded by CNPq. The fourth
author acknowledges support from Grant 2022/1962, also provided by the
Para\'{\i}ba State Research Foundation (FAPESQ). This study received financial
support from the Coordena\c{c}\~{a}o de Aperfei\c{c}oamento de Pessoal de
N\'{\i}vel Superior - Brasil (CAPES) - Finance Code 001.}
\keywords{Lineability, spaceability, complements of vector spaces, Lebesgue spaces}
\subjclass{15A03, 46B87, 28A99}

\begin{abstract}
This paper presents two general criteria to determine spaceability results in
the complements of unions of subspaces. The first criterion applies to countable
unions of subspaces under specific conditions and is closely related to the
results of Kitson and Timoney in [J. Math. Anal. Appl. \textbf{378} (2011),
680-686]. This criterion extends and recovers some classical results in this
theory. The second criterion establishes sufficient conditions for the
complement of a union of Lebesgue spaces to be $\left(  \alpha,\beta\right)
$-spaceable, or not, even when they are not locally convex. We use this result
to characterize the measurable subsets having positive measure. Armed with these results, we have improved
existing results in environments such as: Lebesgue measurable function sets,
spaces of continuous functions, sequence spaces, nowhere H\"{o}lder function
sets, Sobolev spaces, non-absolutely summing operator spaces, and even sets of
functions of bounded variation.
\end{abstract}

\maketitle

\tableofcontents


\section{Preliminaries and background}

Since its emergence in 2005, the concepts of lineability and spaceability have
captured the interest of numerous researchers. Recently, the American
Mathematical Society incorporated these terms into its 2020 Mathematical
Subject Classification, citing them as 15A03 and 46B87. In essence, this idea
revolves around identifying significant algebraic frameworks within non-linear
subsets of a topological vector space, whenever it is possible. The purpose of this
article is to contribute to the elucidation of this contemporary trend by
presenting several general criteria that ensure the existence of these
aforementioned structures, with a particular focus on spaceability and
$(\alpha,\beta)$-spaceability. The definitions for these concepts are provided
below. For a comprehensive overview on lineability, we refer to the survey
\cite{aron}.

Some early instances of findings in this field can be attributed to V. I.
Gurariy (1935-2005), who established that the collection of continuous nowhere
differentiable functions on $\mathbb{R}$ contains (except for the null vector)
infinite-dimensional linear spaces. Moreover, Gurariy \textit{et al.} \cite{AGSS} demonstrated that the set of differentiable nowhere
monotone functions also contains (except for $0$) infinite-dimensional
linear spaces. Subsequently to these groundbreaking contributions, significant
progress has been made in establishing connections between various branches of
mathematics, including Linear Dynamics \cite{CSF2022}, Real and Complex
Analysis \cites{Bernal,AGSS,enflogurariyseoane2014}, Linear and Multilinear
Algebra, or even Operator Theory \cites{bertolotobotelhofavarojatoba2013},
Topology, Measure Theory \cite{Bernal}, Algebraic Geometry \cite{TAMS2020},
and Statistics \cite{stat}.

For instance, in \cite{Bernal,bc2} the authors obtained general results about
the spaceability of sets of the type $L_{p}(\Omega)\smallsetminus\bigcup
_{q\in\Gamma}L_{q}\left(  \Omega\right)  $, where $\Gamma$ is a certain subset
of $(0,\infty]$, $(\Omega,\mathcal{M},\mu)$ is an arbitrary measure space and
$L_{p}(\Omega)$ is the Lebesgue space of all measurable functions
$f:\Omega\rightarrow\mathbb{K}$ ($=\mathbb{R}$ or $\mathbb{C}$) such that
$|f|^{p}$ is $\mu$-integrable ($\mu$-essentially bounded if $p=\infty$) on
$\Omega$. In \cite{bbfp,BCP,BDFP,FPT,a2} the authors use different techniques
for checking spaceability into different \textcolor{black}{kinds} of sequence
sets. More precisely,
if $X$ is an infinite dimensional Banach space over $\mathbb{K}$ and
$X^{\mathbb{N}}$ denotes the space of all $X$-valued sequences, the authors of
\cite{bbfp,BCP,BDFP,FPT,a2} also studied, among others, the spaceability of the
following sets:

\begin{itemize}
\item $c_{0}(X)\smallsetminus\bigcup_{p\in[1,\infty)}\ell_{p}^{w}(X)$;
\item $\ell_{p}(X)\smallsetminus\bigcup_{q\in(1,p)}\ell_{q}^{w}(X)$;
\item $\ell_{p}^{u}(X)\smallsetminus\bigcup_{q\in(1,p)}\ell_{q}^{w}(X)$;
\item $\ell_{p}\left\langle X\right\rangle \smallsetminus\bigcup_{q\in
(1,p)}\ell_{q}^{w}(X)$;
\item $\ell_{p}^{w}(X)\smallsetminus\bigcup_{q\in(1,p)}\ell_{q}^{w}(X)$.
\end{itemize}

Above $\ell_{p}\left(  X\right)  $ is the space of all $p$-summable sequences
in $X^{\mathbb{N}}$, $\ell_{\infty}\left(  X\right)  $ is the space of all
bounded sequences in $X^{\mathbb{N}}$, $c(X)$ is the space of all convergent
sequences in $X^{\mathbb{N}}$, $c_{0}\left(  X\right)  $ is the space of all
sequences $\left(  x_{n}\right)  _{n=1}^{\infty}\in X^{\mathbb{N}}$ such that
$\lim x_{n}=0$, $c_{00}\left(  X\right)  $ is the space of all eventually null
sequences in $X^{\mathbb{N}}$, $\ell_{p}^{w}\left(  X\right)  $ is the space
of all weakly $p$-summable sequences in $X^{\mathbb{N}}$, $\ell_{p}^{u}\left(
X\right)  $ is the space of all unconditionally $p$-summable sequences in
$X^{\mathbb{N}}$, and $\ell_{p}\left\langle X\right\rangle $ is the space of all
Cohen $p$-summable sequences in $X^{\mathbb{N}}$.

We also highlight that very recently, Carmona Tapia \textit{et al.} \cite{a1} presented
results that represent a novelty in this area of research, which is the study
of lineability within the setting of Sobolev spaces. More precisely, they
proved that, for $1\leq p<\infty$, the set
\[
W^{m,p}(a,b)\smallsetminus\bigcup_{q\in(p,\infty)}W^{m,q}(a,b)
\]
is $\mathfrak{c}$-spaceable, where, for $1\leq p<\infty$ and $m\in\mathbb{N}$,
$W^{m,p}(a,b)$ represents the Sobolev space in the open interval $(a,b)$.

As a consequence of our main results (Theorems \ref{Theorem 2.4} and
\ref{t29}), we can extend and verify several classical results of this theory,
as well as we can ensure the existence, except for zero, of large algebraic
structures within, for example, spaces of Lebesgue measurable or continuous
functions, of several sequence spaces, of Holder or Sobolev spaces, spaces of
non-absolutely summing operators or even in the space of functions of bounded
variation. Among several other results, we can highlight that our theorems and
corollaries recover, extend or improve all the results already mentioned above.

Let us recall below some notions of lineability and spaceability that will be
useful for our purpose in this work. For any set $X$ we shall denote by
$\mathrm{card}(X)$ the cardinality of $X$; we also denote $\mathfrak{c} =
\mathrm{card}(\mathbb{R})$ and $\aleph_{0} = \mathrm{card} (\mathbb{N})$.
Assume that $E$ is a vector space and $\beta\leq\dim(E)$ is a cardinal number.
Then a subset $A \subset E$, is said to be:

\begin{itemize}
\item \textit{lineable} if there is an infinite dimensional vector subspace
$F$ of $E$ such that $F \smallsetminus\{0\} \subset A$, and

\item \textit{$\beta$-lineable} if there exists a vector subspace $F_{\beta}$
of $E$ with $\dim(F_{\beta}) = \beta$ and $F_{\beta}\smallsetminus\{0\}
\subset A$ (hence lineability means $\aleph_{0}$-lineability).
\end{itemize}

Also, if $\alpha$ is another cardinal number, with $\alpha<\beta$, then $A$ is
said to be:

\begin{itemize}
\item \textit{$(\alpha,\beta)$-lineable} if it is $\alpha$-lineable and for
every subspace $F_{\alpha}\subset E$ with $F_{\alpha}\subset A\cup\{0\}$ and
$\dim(F_{\alpha})=\alpha$, there is a subspace $F_{\beta}\subset E$ with
$\dim(F_{\beta})=\beta$ and
\begin{equation}
\label{ab}F_{\alpha}\subset F_{\beta}\subset A\cup\{0\}
\end{equation}
(hence $(0,\beta)$-lineability $\Leftrightarrow\ \beta$-lineability).
\end{itemize}

If $E$ is, in addition, a topological vector space, then $A$ is called

\begin{itemize}
\item \textit{$\beta$-spaceable} if $A\cup\{0\}$ contains a closed $\beta
$-dimensional linear subspace of $E$.

\item Moreover, if the subspace $F_{\beta}$ satisfying \eqref{ab} can always
be chosen closed, we say that $A$ is \textit{$(\alpha,\beta)$-spaceable} (see
\cite{FPT}).
\end{itemize}

The classical concept of lineability was coined by V. I. Gurariy in the early
2000's and it first appeared in print in \cite{AGSS,GQ}. The notion of
$(\alpha,\beta)$-lineability was introduced in \cite{FPT} (see
\cite{DR,DFPR,FPR}).

The paper is organized as follows. In Section \ref{sec2} we present our first
main result (Theorem \ref{Theorem 2.4}), a very generic criterion of
$(\alpha,\beta)$-spaceability in the context of complements of vector spaces,
and as a consequence, we present several corollaries that can be applied in
the most varied contexts. In Section \ref{sec3} we extend a result of
\cite{ABRR}, related to the Lebesgue spaces $L_{p}$ in $[0,1]$, to arbitrary
measure spaces. However, our extension is not valid for the non-locally convex
case ($0<p<1$), and this, in a way, tells us that Theorem \ref{Theorem 2.4}
needs a complementary result, which possibly can solve the cases not
contemplated by Theorem \ref{Theorem 2.4}. Therefore, in the same section, as
a complementary result of Theorem \ref{Theorem 2.4}, we prove Theorem
\ref{t29}, our other main result of this paper. In the subsections of Section
\ref{sec4}, we present several applications of the main results of the paper.
There are more than twenty applications, in several specific topological
vector spaces.

Just to give some concrete examples, we mention below three of the various
applications that we obtained as a consequence of our main results:

\begin{itemize}
\item[(i)] We prove a characterization of a measurable subset of positive
measure by means of $(\alpha,\beta)$-spaceability. More precisely, for
$m\in\mathbb{N}$, $p\in(0,\infty)$ and $X\subset\mathbb{R}^{m}$, we prove that
\[
\lambda(X)>0\text{ if and only if }L_{p}(X)\smallsetminus\textstyle\bigcup
_{q\in(p,\infty]}L_{q}(X)\text{ is }(\alpha,\mathfrak{c})\text{-spaceable for
all }\alpha<\aleph_{0},
\]
where $\lambda$ is the Lebesgue measure;

\item[(ii)] Let $\mathcal{C}_{0}\left[  0,\infty\right)  $ be the linear space
of the continuous functions $f: \left[  0,\infty\right)  \to\mathbb{R}$ such
that $\lim_{x\rightarrow\infty}f\left(  x\right)  =0$ endowed with the uniform
convergence norm (i.e., the supremum norm), and let $R\left[  0,\infty\right)  $ be the linear space of
the Riemann-integrable functions $f: \left[  0,\infty\right)  \to\mathbb{R}$
endowed with the norm $\sup_{x\geq0}\left\vert \int_{0}^{x}f\left(  t\right)
dt\right\vert $. In \cite{Bernal} the authors showed that the set
\begin{equation}
\label{bebebe}\left(  \mathcal{C}_{0}\left[  0,\infty\right)  \cap R\left[
0,\infty\right)  \right)  \smallsetminus{\bigcup_{p\in(0,\infty)}}L_{p}\left[
0,\infty\right)
\end{equation}
is spaceable in $\mathcal{C}_{0}\left[  0,\infty\right)  \cap R\left[
0,\infty\right)  $. As a consequence of our results, using a completely
different technique, we improve and complement the above result, showing that
the set in \eqref{bebebe} is $(\alpha,\mathfrak{c})$-spaceable for all
$\alpha<\aleph_{0}$.

\item[(iii)] Let $\mathcal{BVC}[0,1]$ and $\mathcal{AC}[0,1]$ be,
respectively, the set of all continuous functions $[0,1]\to\mathbb{R}$ of
bounded variation and the set of all absolutely continuous functions
$[0,1]\to\mathbb{R}$. As a consequence of our results, we can prove that the
set $\mathcal{BVC}[0,1]\smallsetminus\mathcal{AC}[0,1]$ is $(\alpha
,\mathfrak{c})$-spaceable in $\mathcal{BVC}[0,1]$ for all $\alpha<\aleph_{0}$.
If $\mathcal{C}[0,1]$ is the set of all continuous functions $[0,1]\to
\mathbb{R}$, then we can also prove that
$\mathcal{C}[0,1]\smallsetminus\mathcal{BVC}[0,1]$ is $(\alpha,\mathfrak{c}%
)$-spaceable in $\mathcal{C}[0,1]$ for all $\alpha<\aleph_{0}$.
\end{itemize}

\section{Spaceability in Complement of Unions}
\label{sec2}

In this section, we delve into a comprehensive analysis of spaceability in the
complements of unions of subspaces, aiming to answer a fundamental question:
\emph{what conditions are necessary and sufficient to ensure that the
complement of a union, not necessarily countable, is spaceable}?

So far, we do not have a complete set of conditions that characterize
spaceability in this complex context. However, works like that of Kitson and
Timoney in \cite{Kitson} offer intriguing insights, highlighting that under
certain conditions, the complement of a countable union of subspaces is
spaceable. Furthermore, we aim to understand more about the relationship
between the notions of \emph{spaceability} and $\left(  \alpha,\beta\right)
$-\emph{spaceability}. For example, the set of functions nowhere
differentiable in $C[0,1]$, denoted by $\mathcal{N}\mathcal{D}[0,1]$, is one
of those sets that can be described as the complement of a union of dense
subspaces. According to the literature (see \cite{aron, Fonf}), it is
spaceable, but it has not yet been determined whether it is specifically
$\left(  1,\mathfrak{c}\right)  $-spaceable or not.

At this point, we present the theorem below, which will be applied to countable
unions of subspaces under certain conditions and is closely related to results
verified in \cite{Kitson}.

\begin{theorem}
\label{Theorem 2.4}Let $V$ be a Banach space and $\left\{  W_{r}\right\}
_{r\in\mathbb{N}}$ be a family of subspaces of $V$. Assume that $%
{\textstyle\bigcup\nolimits_{r=1}^{\infty}}
W_{r}$ is a subspace of $V$ with infinite codimension. If for each
$r\in\mathbb{N}$, there exist a norm $\Vert\cdot\Vert_{W_{r}}$ for $W_{r}$ and
a constant $C_{r}>0$ such that $\left(  W_{r},\left\Vert \cdot\right\Vert
_{W_{r}}\right)  $ is Banach and $\left\Vert \cdot\right\Vert _{V}\leq
C_{r}\left\Vert \cdot\right\Vert _{W_{r}}$, then the complement
$V\smallsetminus%
{\textstyle\bigcup\nolimits_{r=1}^{\infty}}
W_{r}$ is
\color{black}%
$\left(  \alpha,\mathfrak{c}\right)  $-spaceable for all $\alpha<\aleph_{0}$.%
\color{black}%
\end{theorem}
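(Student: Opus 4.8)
Write $F=\bigcup_{r=1}^{\infty}W_{r}$ and fix a finite $\alpha<\aleph_{0}$. The plan is to isolate the analytic core of the statement---that $V\smallsetminus F$ contains a closed subspace of dimension $\mathfrak{c}$---and then deduce the full $(\alpha,\mathfrak{c})$-spaceability by a soft perturbation. For the reduction, let $F_{\alpha}\subseteq(V\smallsetminus F)\cup\{0\}$ be any subspace with $\dim F_{\alpha}=\alpha$, so that $F_{\alpha}\cap F=\{0\}$. I would apply the core statement not to $\{W_{r}\}$ but to the enlarged family $\{W_{r}+F_{\alpha}\}$: each $W_{r}+F_{\alpha}$ is again a subspace, is Banach under $\Vert w+f\Vert:=\Vert w\Vert_{W_{r}}+\Vert f\Vert_{V}$ (a finite-dimensional enlargement of a Banach space) with $\Vert\cdot\Vert_{V}\le\max\{C_{r},1\}\Vert\cdot\Vert$, and $\bigcup_{r}(W_{r}+F_{\alpha})=F+F_{\alpha}$ is a subspace of infinite codimension, since a finite-dimensional bump does not change the codimension. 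The core then yields a closed $\mathfrak{c}$-dimensional subspace $M_{0}$ with $M_{0}\cap(F+F_{\alpha})=\{0\}$, and I set $F_{\beta}:=F_{\alpha}+M_{0}$. This $F_{\beta}$ is closed (finite-dimensional plus closed); moreover $F_{\alpha}\cap M_{0}\subseteq(F+F_{\alpha})\cap M_{0}=\{0\}$, so $\dim F_{\beta}=\alpha+\mathfrak{c}=\mathfrak{c}$; and if $f+m\in F$ with $f\in F_{\alpha}$, $m\in M_{0}$, then $m=(f+m)-f\in F+F_{\alpha}$ forces $m=0$ and then $f\in F\cap F_{\alpha}=\{0\}$. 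Hence $F_{\alpha}\subseteq F_{\beta}\subseteq(V\smallsetminus F)\cup\{0\}$, which is exactly the $(\alpha,\mathfrak{c})$-spaceability condition, the required $\alpha$-lineability being the case $\alpha=0$ of the core.

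It remains to establish the core: \emph{under the hypotheses, $V\smallsetminus F$ contains a closed, infinite-dimensional subspace} (which, being separable, has Hamel dimension $\mathfrak{c}$). This is precisely the Kitson--Timoney setting \cite{Kitson}, the continuous inclusions $\iota_{r}\colon(W_{r},\Vert\cdot\Vert_{W_{r}})\hookrightarrow V$ playing the role of their operators, so one may either quote their theorem or argue directly. First I would record that, since $\iota_{r}$ is a continuous injection of a Banach space with infinite-codimensional range, the open mapping theorem shows that the $V$-closure of each ball $B_{r}(R):=\{x\in W_{r}:\Vert x\Vert_{W_{r}}\le R\}$ has empty interior; thus $W_{r}=\bigcup_{R}B_{r}(R)$, and therefore $F$, is meager in $V$. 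Next I would build, by a Mazur-type gliding-hump induction, a normalized basic sequence $(v_{n})$ of bounded basis constant, started from a (normalized) basis $w_{1},\dots,w_{\alpha}$ of $F_{\alpha}$ and subject at the $n$-th step to $v_{n}\notin F+G_{n-1}$, where $G_{n-1}=\mathrm{span}(v_{1},\dots,v_{n-1})$; this is possible because $F+G_{n-1}$ is a proper subspace. This single requirement already forces $G_{n}\cap F=\{0\}$ for every $n$, so that every nonzero \emph{finitely supported} vector of $M:=\overline{\mathrm{span}}\,(v_{n})$ lies outside $F$.

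The hard part will be to rule out infinitely supported $x=\sum_{n}a_{n}v_{n}\in M$ lying in some $W_{r}$, and it is here that completeness of $(W_{r},\Vert\cdot\Vert_{W_{r}})$---not mere topology---must enter, since the $W_{r}$ may be dense in $V$. I would refine the induction so that $v_{n}$ is kept a definite distance $\delta_{n}>0$ from the nowhere-dense sets $\overline{B_{r}(R)}^{\,V}+G_{n-1}$ for the finitely many pairs with $r,R\le n$; each such set is closed and nowhere dense because $B_{r}(R)$ is $V$-bounded and its closure has empty interior. If $x=\sum_{n}a_{n}v_{n}$ were a nonzero element of $W_{r}$, then, writing $s_{N}=\sum_{n\le N}a_{n}v_{n}\to x$ and using $x\in\overline{B_{r}(R)}^{\,V}$ for a suitable $R$, one extracts $\mathrm{dist}_{V}\!\big(a_{N}v_{N},\ \overline{B_{r}(R)}^{\,V}+G_{N-1}\big)\le\Vert x-s_{N}\Vert_{V}\to 0$. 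The genuine obstacle is that normalizing by the vanishing coefficients $a_{N}$ rescales the radius $R$, so the margins $\delta_{n}$, the basis constant, the tail decay $\Vert x-s_{N}\Vert_{V}$, and the admissible radii must be balanced along a diagonal subsequence to force every $a_{n}=0$; this balancing is exactly the technical heart of the Kitson--Timoney argument. Granting it, $M\cap W_{r}=\{0\}$ for all $r$, whence $M\smallsetminus\{0\}\subseteq V\smallsetminus F$; and since $M$ is the closed span of a sequence it is separable and infinite-dimensional, so $\dim M=\mathfrak{c}$. This proves the core and, via the reduction above, the theorem.
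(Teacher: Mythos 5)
Your proposal is correct and takes essentially the same route as the paper: the paper likewise enlarges the union by the given finite-dimensional $F_{\alpha}$ (realizing $\operatorname{span}(\mathcal{C})$ as one extra operator range rather than bundling it into each $W_{r}$), invokes Kitson--Timoney to obtain a closed infinite-dimensional subspace $M_{0}$ avoiding $\bigl(\bigcup_{r=1}^{\infty}W_{r}\bigr)+F_{\alpha}$, and sets $F_{\beta}=F_{\alpha}+M_{0}$, proving closedness via the quotient map $V\to V/M_{0}$, which is equivalent to your ``finite-dimensional plus closed'' remark. The only caveat is that your alternative direct proof of the core is left incomplete at its acknowledged technical heart, so your argument stands on the option you also offer---quoting Kitson--Timoney---which is exactly what the paper does, citing their Theorems 2.1 and 2.3 for the closed and non-closed cases respectively.
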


\begin{proof}
Fix
\color{black}
$\alpha<\aleph_{0}$
\color{black}
and let $\mathcal{C}$ be a linearly independent subset of $V$ such that
\[
\mathrm{card}(\mathcal{C})=\alpha\text{
\color{black}%
and }\operatorname*{span}\left(  \mathcal{C}\right)  \cap\left(  \bigcup
_{r=1}^{\infty}W_{r}\right)  =\left\{  0\right\}  \text{.}%
\]
Consider the inclusion mapping $T_{1}\colon\left(  \operatorname*{span}\left(
\mathcal{C}\right)  ,\left\Vert \cdot\right\Vert _{V}\right)  \longrightarrow
\left(  V,\left\Vert \cdot\right\Vert _{V}\right)  $. Clearly, $T_{1}$ is
linear and continuous. For $r>1$, also consider the inclusion mapping
$T_{r}\colon\left(  W_{r},\left\Vert \cdot\right\Vert _{W_{r}}\right)
\longrightarrow\left(  V,\left\Vert \cdot\right\Vert _{V}\right)  $. It is
plain that $T_{r}$ is linear. Furthermore, $T_{r}$ is also continuous.
Indeed,
\[
\left\Vert T_{r}\left(  x\right)  \right\Vert _{V}=\left\Vert x\right\Vert
_{V}\leq C_{r}\left\Vert x\right\Vert _{W_{r}}\text{ for every }x\in\left(
W_{r},\left\Vert \cdot\right\Vert _{W_{r}}\right)  \text{.}%
\]
Now, let us turn our attention to the subspace $\operatorname*{span}\left[
\left(  \bigcup_{r=1}^{\infty}W_{r}\right)  \cup\operatorname*{span}\left(
\mathcal{C}\right)  \right]  $. If it is closed in $V$, then by \cite[Theorem
2.1]{Kitson} and the fact that $\operatorname*{span}\left[  \left(
\bigcup_{r=1}^{\infty}W_{r}\right)  \cup\operatorname*{span}\left(
\mathcal{C}\right)  \right]  =\left(
{\textstyle\bigcup\nolimits_{r=1}^{\infty}}
W_{r}\right)  +\operatorname*{span}\left(  \mathcal{C}\right)  $, we can
conclude that the complement $V\smallsetminus\left[  \left(
{\textstyle\bigcup\nolimits_{r=1}^{\infty}}
W_{r}\right)  +\operatorname*{span}\left(  \mathcal{C}\right)  \right]  $ is
spaceable. On the other hand, if $\operatorname*{span}\left[  \left(
\bigcup_{r=1}^{\infty}W_{r}\right)  \cup\operatorname*{span}\left(
\mathcal{C}\right)  \right]  $ is not closed in $V$, then by (see
\cite[Theorem 2.3]{Kitson}), the complement
\[
V\smallsetminus\left[  \left(  \bigcup_{r=1}^{\infty}W_{r}\right)
+\operatorname*{span}\left(  \mathcal{C}\right)  \right]
\]
is spaceable. In either case, there exists a closed infinite dimensional
subspace $F$ in $V$ such that
\[
F\cap\left[  \left(  \bigcup_{r=1}^{\infty}W_{r}\right)  +\operatorname*{span}%
\left(  \mathcal{C}\right)  \right]  =\left\{  0\right\}  \text{.}%
\]
Denoting $E:=F+\operatorname{span}(\mathcal{C})$, we now show that $(%
{\textstyle\bigcup\nolimits_{r=1}^{\infty}}
W_{r})\cap E=\left\{  0\right\}  $. Indeed, if $x\in(%
{\textstyle\bigcup\nolimits_{r=1}^{\infty}}
W_{r})\cap E$, then there exist $w\in F$ and $v\in\operatorname{span}%
(\mathcal{C})$ such that $x=w+v$. Hence, we have
\[
w=x-v\in\left[  (%
{\textstyle\bigcup\nolimits_{r=1}^{\infty}}
W_{r})+\operatorname{span}(\mathcal{C})\right]  \cap F=\{0\}\text{,}%
\]
which implies that $x=v\in(%
{\textstyle\bigcup\nolimits_{r=1}^{\infty}}
W_{r})\cap\operatorname{span}(\mathcal{C})=\{0\}$. Therefore, $(%
{\textstyle\bigcup\nolimits_{r=1}^{\infty}}
W_{r})\cap E=\{0\}$, that is,
\[
E\smallsetminus\{0\}\subset V\smallsetminus%
{\textstyle\bigcup\nolimits_{r=1}^{\infty}}
W_{r}\text{.}%
\]
Now, we claim that $E$ is closed in $V$. In fact, let $Q\colon
V\longrightarrow V/F$ be the quotient map of $V$ onto $V/F$. Since
$\operatorname{span}(\mathcal{C})$ is finite dimensional, the subspace
$Q\left(  \operatorname{span}(\mathcal{C})\right)  $ has finite dimension.
This implies that $Q\left(  \operatorname{span}(\mathcal{C})\right)  $ is
closed in $V/F$, and since $F+\operatorname{span}(\mathcal{C})=Q^{-1}\left(
Q\left(  \operatorname{span}(\mathcal{C})\right)  \right)  $ and $Q$ is
continuous, we conclude that $E=F+\operatorname{span}(\mathcal{C})$ is closed
in $V$. Therefore, due to the fact that $\dim\left(  E\right)  \geq
\mathfrak{c}$, the proof is complete.
\end{proof}

If $V$ is a Banach space and $W$ is a subspace of $V$, with $\dim(W)=\aleph_{0} $,
then the main result of \cite{AB} proves that $V\smallsetminus W$ is
$(\alpha,\dim(V))$-lineable for all $\alpha<\dim(V)$. However, from \cite{AB}
we cannot guarantee anything about the spaceability of this complement. As a
first application of Theorem \ref{Theorem 2.4}, we have the following result:

\begin{corollary}
\label{coro1}Let $V$ be a Banach space and let $W$ be a subspace of $V$ with
$\dim(W)=\aleph_{0}$. Then $V\smallsetminus W$ is $(\alpha,\mathfrak{c}
)$-spaceable for all $\alpha<\aleph_{0}$.
\end{corollary}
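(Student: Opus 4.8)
The plan is to deduce Corollary~\ref{coro1} directly from Theorem~\ref{Theorem 2.4} by recognizing the single subspace $W$ as a countable union of finite-dimensional subspaces, each of which trivially satisfies the Banach and norm-domination hypotheses. Since $\dim(W)=\aleph_0$, I would fix an algebraic (Hamel) basis $\{e_n\}_{n\in\mathbb{N}}$ of $W$ and set $W_r:=\operatorname{span}\{e_1,\dots,e_r\}$ for each $r\in\mathbb{N}$. Then $\{W_r\}_{r\in\mathbb{N}}$ is an increasing chain of finite-dimensional subspaces whose union is exactly $W$, so in particular $\bigcup_{r=1}^\infty W_r=W$ is a subspace of $V$, as required by the theorem.

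Next I would verify the quantitative hypotheses of Theorem~\ref{Theorem 2.4} for this family. For each $r$, since $W_r$ is finite-dimensional, all norms on it are equivalent; in particular the restriction $\Vert\cdot\Vert_V$ already makes $(W_r,\Vert\cdot\Vert_V)$ a Banach space (finite-dimensional normed spaces being complete). I would therefore simply take $\Vert\cdot\Vert_{W_r}:=\Vert\cdot\Vert_V$ and $C_r:=1$, so that $\Vert\cdot\Vert_V\leq C_r\Vert\cdot\Vert_{W_r}$ holds trivially with equality. This discharges the domination condition without any real computation.

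The remaining hypothesis of Theorem~\ref{Theorem 2.4} is that $W=\bigcup_{r=1}^\infty W_r$ has infinite codimension in $V$. This is the one point requiring justification rather than a one-line verification, and it is where I would expect the only genuine (if mild) friction. Because $V$ is a Banach space and $W$ is countable-dimensional, $W$ cannot have finite codimension: if it did, then $\dim(V)=\dim(W)+\operatorname{codim}(W)=\aleph_0+(\text{finite})=\aleph_0$, forcing $V$ to be an infinite-dimensional Banach space with a countable Hamel basis, which contradicts the Baire category theorem (an infinite-dimensional Banach space has Hamel dimension at least $\mathfrak{c}$). Hence $\operatorname{codim}(W)$ must be infinite. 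I would state this explicitly so the reader sees that the hypothesis is automatic here.

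With all three hypotheses of Theorem~\ref{Theorem 2.4} confirmed, the conclusion is immediate: $V\smallsetminus W = V\smallsetminus\bigcup_{r=1}^\infty W_r$ is $(\alpha,\mathfrak{c})$-spaceable for all $\alpha<\aleph_0$, which is precisely the claim. The overall strategy is thus a reduction: the corollary is the special case of the theorem in which the countable union degenerates to a single subspace, exhibited as an exhaustion by finite-dimensional pieces so that the structural requirements become trivially satisfiable.
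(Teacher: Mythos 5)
Your proposal is correct and is precisely the argument the paper intends: Corollary~\ref{coro1} is stated as a direct application of Theorem~\ref{Theorem 2.4}, with the exhaustion $W=\bigcup_{r}\operatorname{span}\{e_{1},\dots,e_{r}\}$ by finite-dimensional (hence complete under $\Vert\cdot\Vert_{V}$, with $C_{r}=1$) subspaces being the evident reduction. Your Baire-category justification that $W$ has infinite codimension (no infinite-dimensional Banach space has Hamel dimension $\aleph_{0}$) is exactly the point the paper leaves implicit, and you are right to spell it out.
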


In the Corollary \ref{Corollary 2.7} bellow, the $(\alpha,\mathfrak{c}
)$-spaceability of $V\smallsetminus%
{\textstyle\bigcup\nolimits_{r=1}^{\infty}}
W_{r}$, $\alpha<\aleph_{0}$, it is a direct consequence of Theorem
\ref{Theorem 2.4}. If $\alpha\geq\aleph_{0}$, from \cite[Corollary 2.3]{fprs}
we can conclude that this complement is not $(\alpha,\mathfrak{c})$-spaceable.
So we have the following result:


\begin{corollary}
\label{Corollary 2.7} Let $V$ be a Banach space and $\left\{  W_{r}\right\}
_{r\in\mathbb{N}}$ be a family of subspaces of $V$. Assume that $%
{\textstyle\bigcup\nolimits_{r=1}^{\infty}}
W_{r}$ is an infinite codimensional subspace of $V$. If for each
$r\in\mathbb{N}$, there exists a norm $\Vert\cdot\Vert_{W_{r}}$ for $W_{r}$ and
a constant $C_{r}>0$ such that $\left(  W_{r},\left\Vert \cdot\right\Vert
_{W_{r}}\right)  $ is Banach and $\left\Vert \cdot\right\Vert _{V}\leq
C_{r}\left\Vert \cdot\right\Vert _{W_{r}}$, then
\[
V\smallsetminus\bigcup_{r=1}^{\infty}W_{r}\text{ is }\left(  \alpha
,\mathfrak{c}\right)  \text{-spaceable if and only if }\alpha<\aleph
_{0}\text{.}%
\]

\end{corollary}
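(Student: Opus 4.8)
The plan is to treat the two implications separately; the genuine content lies entirely in the \emph{necessity} (``only if'') direction. For \emph{sufficiency}, note that the hypotheses of this corollary are word-for-word those of Theorem \ref{Theorem 2.4}, so if $\alpha<\aleph_{0}$ that theorem already gives that $V\smallsetminus\bigcup_{r=1}^{\infty}W_{r}$ is $(\alpha,\mathfrak{c})$-spaceable. Write $\mathcal{W}:=\bigcup_{r=1}^{\infty}W_{r}$, which by hypothesis is a subspace of $V$. Since the very definition of $(\alpha,\mathfrak{c})$-spaceability presupposes $\alpha<\mathfrak{c}$, what remains is to show that for every $\alpha$ with $\aleph_{0}\le\alpha<\mathfrak{c}$ the set $V\smallsetminus\mathcal{W}$ fails to be $(\alpha,\mathfrak{c})$-spaceable. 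This is exactly the assertion of \cite[Corollary 2.3]{fprs}, and the remaining paragraphs sketch how to obtain it directly in the present setting.

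The guiding principle is that $(\alpha,\mathfrak{c})$-spaceability requires the extension property for \emph{every} admissible $F_{\alpha}$; hence to refute it I need only exhibit a \emph{single} subspace $F_{\alpha}\subset(V\smallsetminus\mathcal{W})\cup\{0\}$ with $\dim(F_{\alpha})=\alpha$ that admits no closed extension inside $(V\smallsetminus\mathcal{W})\cup\{0\}$. The mechanism I would use to block every extension is topological: I will arrange that $F_{\alpha}\cap\mathcal{W}=\{0\}$ while $\overline{F_{\alpha}}\cap\mathcal{W}\neq\{0\}$. Indeed, any closed subspace $F$ with $F_{\alpha}\subset F$ must contain $\overline{F_{\alpha}}$, hence a nonzero vector of $\mathcal{W}$, so $F\not\subset(V\smallsetminus\mathcal{W})\cup\{0\}$.

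To build such an $F_{\alpha}$, fix a nonzero $w\in\mathcal{W}$. Since Theorem \ref{Theorem 2.4} already produces a $\mathfrak{c}$-dimensional subspace of $V$ meeting $\mathcal{W}$ only at $0$, the quotient $V/\mathcal{W}$ has dimension at least $\mathfrak{c}$; in particular I may choose a family $(u_{n})_{n\in\mathbb{N}}$ in $V$ with $\Vert u_{n}\Vert_{V}=1$ whose classes are linearly independent in $V/\mathcal{W}$, and set $z_{n}:=w+\tfrac{1}{n}u_{n}$. Then $z_{n}\to w$ in $V$, each $z_{n}\notin\mathcal{W}$, and a short computation using the independence of the $u_{n}$ modulo $\mathcal{W}$ shows $\operatorname*{span}\{z_{n}:n\in\mathbb{N}\}\cap\mathcal{W}=\{0\}$; thus this countable-dimensional space $Z_{0}$ sits inside $(V\smallsetminus\mathcal{W})\cup\{0\}$ yet satisfies $w\in\overline{Z_{0}}$. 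Finally, using $\dim(V/\mathcal{W})\ge\mathfrak{c}>\alpha$, I would enlarge $Z_{0}$ to a subspace $F_{\alpha}\supset Z_{0}$ of dimension exactly $\alpha$ whose image under the quotient map $V\to V/\mathcal{W}$ remains linearly independent; this keeps $F_{\alpha}\cap\mathcal{W}=\{0\}$ while preserving $w\in\overline{Z_{0}}\subset\overline{F_{\alpha}}$, so $F_{\alpha}$ is the required obstruction.

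The step demanding the most care is the bookkeeping in this enlargement: I must raise the dimension to the prescribed $\alpha$ while simultaneously keeping the intersection with $\mathcal{W}$ trivial and not disturbing the fact that $w$ lies in the closure. Both are handled by working in $V/\mathcal{W}$ and selecting the new generators to have linearly independent classes there, which is possible precisely because Theorem \ref{Theorem 2.4} guarantees $\dim(V/\mathcal{W})\ge\mathfrak{c}>\alpha$ for every $\alpha<\mathfrak{c}$. Once $F_{\alpha}$ is in hand the conclusion is immediate, and combined with the sufficiency direction it yields the stated equivalence.
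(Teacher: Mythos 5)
Your proof is correct, and it diverges from the paper only in the necessity direction. The paper disposes of the corollary in two lines: for $\alpha<\aleph_{0}$ it invokes Theorem \ref{Theorem 2.4} (exactly as you do), and for $\alpha\geq\aleph_{0}$ it simply cites \cite[Corollary 2.3]{fprs} with no further argument. You instead reprove the cited fact from scratch in this setting, and your construction checks out: since $\mathcal{W}=\bigcup_{r}W_{r}$ is a subspace, $A+\mathcal{W}\subset A$ for $A=V\smallsetminus\mathcal{W}$, so the perturbations $z_{n}=w+\frac{1}{n}u_{n}$ lie in $A$; the independence of the classes $[u_{n}]$ (equivalently of $[z_{n}]=\frac{1}{n}[u_{n}]$) gives $\operatorname{span}\{z_{n}\}\cap\mathcal{W}=\{0\}$ and $\dim Z_{0}=\aleph_{0}$; the enlargement to dimension exactly $\alpha$ works because $\aleph_{0}+\alpha=\alpha<\mathfrak{c}\leq\dim(V/\mathcal{W})$, the last inequality being correctly extracted from the $\alpha=0$ case of Theorem \ref{Theorem 2.4} together with the standard fact that a closed infinite-dimensional subspace of a Banach space has dimension at least $\mathfrak{c}$; and $w\in\overline{F_{\alpha}}\cap\mathcal{W}\smallsetminus\{0\}$ indeed blocks every closed extension, of any dimension. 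Conceptually this is the same translation-invariance mechanism that underlies the fprs criterion (a nonzero subspace disjoint from $A$ along which $A$ is invariant forces closures of infinite-dimensional subspaces of $A\cup\{0\}$ into the forbidden set), so the routes coincide in spirit; what your version buys is self-containedness and a transparent display of exactly where $\alpha\geq\aleph_{0}$ is used, namely to absorb the countably many $z_{n}$ into $F_{\alpha}$.

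One caveat worth recording, which affects the paper's formulation as much as yours: your argument opens with ``fix a nonzero $w\in\mathcal{W}$'', so it needs $\bigcup_{r}W_{r}\neq\{0\}$, which the stated hypotheses do not force. In the degenerate case $W_{r}=\{0\}$ for all $r$ the ``only if'' direction actually fails: for $\aleph_{0}\leq\alpha<\mathfrak{c}$, any $\alpha$-dimensional subspace $F_{\alpha}\subset V$ has closure of dimension exactly $\mathfrak{c}$ (at least $\mathfrak{c}$ because it is a closed infinite-dimensional subspace of a Banach space, at most $\mathfrak{c}$ because its cardinality is bounded by $\alpha^{\aleph_{0}}\leq\mathfrak{c}$), so $V\smallsetminus\{0\}$ \emph{is} $(\alpha,\mathfrak{c})$-spaceable. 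The nontriviality of the union is likewise implicit in the hypotheses of \cite[Corollary 2.3]{fprs}, so this is a shared, minor omission of the statement rather than a gap specific to your argument.
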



\section{A first application and the need for a complementary result}

\label{sec3}

Let $(\Omega,\mathcal{M},\mu)$ be a measure space, with $\mu$ a positive
measure, and let $p\in(0,\infty]$. As usual, $L_{p}(\Omega)$ will denote the
vector space of all (Lebesgue classes of) measurable functions $f:\Omega
\rightarrow\mathbb{K}$ such that%
\[%
\begin{cases}
|f|^{p}\text{ is }\mu\text{-integrable on }\Omega\text{,} & \text{for
}0<p<\infty\text{,}\\
f\text{ is }\mu\text{-essentially bounded in }\Omega\text{,} & \text{for
}p=\infty\text{.}%
\end{cases}
\]
Recall that if $1\leq p<\infty$ (if $0<p<1$) then $L_{p}(\Omega)$ becomes a
Banach (quasi-Banach, resp.) space under the norm ($p$-norm, resp.)
\[
\Vert f\Vert_{p}=\left(  \int_{\Omega}|f|^{p}d\mu\right)  ^{\frac{1}{p}%
}\text{.}%
\]
If $p=\infty$, $L_{\infty}(\Omega)$ becomes a Banach space under the norm
\[
\Vert f\Vert=\inf\{M>0:|f|\leq M\ \mu\text{-almost everywhere in
\color{black}
}\Omega\text{%
\color{black}%
}\}\text{.}%
\]

F\'{a}varo \textit{et al.} \cite{fprs} proved that the set $L_{p}\left[  0,1\right]
\smallsetminus\bigcup_{q\in\left(  p,\infty\right)  }L_{q}\left[  0,1\right]
$, $p>0$, is not $(\alpha,\beta)$-spaceable for $\alpha\geq\aleph_{0}$,
regardless of the cardinal number $\beta$. In Corollary \ref{Corollary 2.7},
let $V=L_{p}[0,1]$ and $W_{r}:=L_{p+\frac{1}{r}}[0,1]$. Observe that $\left(
L_{p+\frac{1}{r}}[0,1],\left\Vert \cdot\right\Vert _{p+\frac{1}{r}}\right)  $
is Banach and that there is $C_{r}>0$ such that $\left\Vert \cdot\right\Vert
_{p}\leq C_{r}\left\Vert \cdot\right\Vert _{p+\frac{1}{r}}$ for every
$r\in\mathbb{N}$. Moreover note that
\[
\bigcup_{q\in(p,\infty)}L_{q}[0,1]=\bigcup_{r=1}^{\infty}L_{p+\frac{1}{r}%
}[0,1]\text{.}%
\]
Therefore, as a consequence of Corollary \ref{Corollary 2.7}, we can obtain that,
for $p\geq1$, the set
\begin{equation}
L_{p}[0,1]\smallsetminus\bigcup_{q\in(p,\infty)}L_{q}[0,1]\text{ is }\left(
\alpha,\mathfrak{c}\right)  \text{-spaceable if and only if }\alpha<\aleph
_{0}\text{.} \label{Lp-Lq}%
\end{equation}

It is already known (see \cite{ABRR}) that the above result is also valid for
$0<p<1$ (This discussion started in \cite{BFPS}). This is the first evidence
that, despite having many applications in the most varied contexts, Theorem
\ref{Theorem 2.4} needs a complementary result, which possibly can solve the
cases not contemplated by Theorem \ref{Theorem 2.4}. This will be done in
Theorem \ref{t29}, our other main result of this paper.

Despite not solving the case $0<p<1$, Theorem \ref{Theorem 2.4} can be used to
extend \eqref{Lp-Lq} to arbitrary measure spaces (see Corollary \ref{c5}). For
this, we will need the following results:

In what follows $(\Omega,\mathcal{M},\mu)$ denotes an arbitrary measure space.

\begin{proposition}
\label{p1}Let $(\Omega,\mathcal{M},\mu)$ be a measure space. If $f\in
L^{p}(\Omega)\cap L^{q}(\Omega)$, with $0<p\leq q\leq\infty$, then $f\in
L^{r}(\Omega)$ for each $p\leq r\leq q$ and
\[
\Vert f\Vert_{r}\leq\Vert f\Vert_{p}^{\theta}\cdot\Vert f\Vert_{q}^{1-\theta
}\text{,}
\]
where $0\leq\theta\leq1$ satisfies $\frac{1}{r}=\frac{\theta}{p}
+\frac{1-\theta}{q}$.
\end{proposition}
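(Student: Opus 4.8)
The plan is to reduce the whole statement to a single application of H\"older's inequality with a carefully chosen pair of conjugate exponents, treating the endpoints and the case $q=\infty$ separately. First I would dispose of the trivial situations: if $\theta=1$ then $r=p$, if $\theta=0$ then $r=q$, and if $p=q$ then $r=p=q$, so in each of these there is nothing to prove. Hence I may assume $0<\theta<1$ and $p<r<q$. I would also record the elementary consequence of the relation $\frac{1}{r}=\frac{\theta}{p}+\frac{1-\theta}{q}$ that $p\leq r\leq q$ (discard the nonnegative term $\frac{1-\theta}{q}$ to get $r\geq p$, discard $\frac{\theta}{p}$ to get $r\leq q$); this is exactly what makes the auxiliary exponents below admissible.

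Assume first $q<\infty$. Multiplying the defining relation by $r$ gives $1=\frac{r\theta}{p}+\frac{r(1-\theta)}{q}$, which suggests setting $s=\frac{p}{r\theta}$ and $s'=\frac{q}{r(1-\theta)}$. These are conjugate, since $\frac{1}{s}+\frac{1}{s'}=1$, and both are at least $1$: from $\frac{1-\theta}{q}\geq 0$ we get $r\theta\leq p$, i.e. $s\geq 1$, and symmetrically $s'\geq 1$. Writing $|f|^{r}=|f|^{r\theta}\cdot|f|^{r(1-\theta)}$ and applying H\"older's inequality with exponents $s,s'$ to the nonnegative measurable functions $|f|^{r\theta}$ and $|f|^{r(1-\theta)}$ yields
\[
\int_{\Omega}|f|^{r}\,d\mu\leq\left(\int_{\Omega}|f|^{r\theta s}\,d\mu\right)^{1/s}\left(\int_{\Omega}|f|^{r(1-\theta)s'}\,d\mu\right)^{1/s'}.
\]
By the choice of $s,s'$ we have $r\theta s=p$ and $r(1-\theta)s'=q$, so the right-hand side equals $\Vert f\Vert_{p}^{p/s}\,\Vert f\Vert_{q}^{q/s'}=\Vert f\Vert_{p}^{r\theta}\,\Vert f\Vert_{q}^{r(1-\theta)}$, which is finite because $f\in L^{p}(\Omega)\cap L^{q}(\Omega)$; in particular $f\in L^{r}(\Omega)$. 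Extracting the $r$-th root gives $\Vert f\Vert_{r}\leq\Vert f\Vert_{p}^{\theta}\,\Vert f\Vert_{q}^{1-\theta}$, as claimed.

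For the case $q=\infty$ the relation collapses to $\theta=p/r$, and I would argue directly using the essential-sup bound: $\int_{\Omega}|f|^{r}\,d\mu=\int_{\Omega}|f|^{r-p}|f|^{p}\,d\mu\leq\Vert f\Vert_{\infty}^{r-p}\,\Vert f\Vert_{p}^{p}$, whence the $r$-th root yields $\Vert f\Vert_{r}\leq\Vert f\Vert_{p}^{p/r}\,\Vert f\Vert_{\infty}^{(r-p)/r}=\Vert f\Vert_{p}^{\theta}\,\Vert f\Vert_{\infty}^{1-\theta}$. I do not anticipate a genuine obstacle: the only points demanding care are the admissibility checks $s,s'\geq 1$, which rest solely on $p\leq r\leq q$, and the observation that even when $0<p<1$ (so that $\Vert\cdot\Vert_{p}$ is only a quasi-norm) H\"older's inequality still applies, precisely because the auxiliary exponents $s,s'$ are genuinely $\geq 1$. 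The endpoint $q=\infty$ must simply be separated out, since the finite-exponent form of H\"older does not cover it directly.
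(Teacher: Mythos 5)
Your proof is correct. The paper itself states Proposition \ref{p1} without proof, treating it as the classical interpolation (log-convexity) inequality for Lebesgue norms, and your argument is precisely the standard one: the splitting $|f|^{r}=|f|^{r\theta}\cdot|f|^{r(1-\theta)}$ with conjugate exponents $s=\frac{p}{r\theta}$, $s'=\frac{q}{r(1-\theta)}$, together with the separate essential-sup argument for $q=\infty$. All the delicate points are handled: the verification $\frac{1}{s}+\frac{1}{s'}=1$ and $s,s'\geq 1$ from $p\leq r\leq q$, the trivial cases $\theta\in\{0,1\}$ and $p=q$, and -- importantly for this paper, which uses the proposition in the quasi-Banach range -- the observation that the argument survives $0<p<1$ because H\"older is only ever invoked with the auxiliary exponents $s,s'\geq 1$, never with $p$ itself.
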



\begin{proposition}
\label{p2}Let $(\Omega,\mathcal{M},\mu)$ be a measure space. Given
$p\in(0,\infty]$ and $\emptyset\neq\Gamma\subset(0,\infty]$, with
$\Gamma\subset\lbrack p,\infty]$ or $\Gamma\subset(0,p]$, the subset $\left(
\bigcup_{q\in\Gamma}L_{q}(\Omega)\right)  \cap L_{p}(\Omega)$ is a subspace of
$L_{p}(\Omega)$.
\end{proposition}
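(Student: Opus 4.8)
The plan is to verify directly that the set
\[
A:=\left(  \bigcup_{q\in\Gamma}L_{q}(\Omega)\right)  \cap L_{p}(\Omega)
\]
satisfies the vector subspace axioms inside $L_{p}(\Omega)$. Since $0$ belongs to every $L_{q}(\Omega)$ and to $L_{p}(\Omega)$, we have $0\in A$. Closure under scalar multiplication is immediate: if $f\in A$, then $f\in L_{q}(\Omega)$ for some $q\in\Gamma$ and $f\in L_{p}(\Omega)$, and because each $L_{q}(\Omega)$ and $L_{p}(\Omega)$ is a genuine vector space (true even in the non-locally convex range $0<q<1$), any scalar multiple $\lambda f$ lies in both, hence in $A$. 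The entire content of the statement therefore lies in closure under addition, and this is where the dichotomy hypothesis on $\Gamma$ enters in an essential way.

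First I would take $f,g\in A$, so that $f\in L_{q_{1}}(\Omega)$ and $g\in L_{q_{2}}(\Omega)$ for some $q_{1},q_{2}\in\Gamma$, with both $f,g\in L_{p}(\Omega)$. Since $f+g\in L_{p}(\Omega)$ trivially (as $L_{p}(\Omega)$ is a vector space), it suffices to exhibit a single index $q^{\ast}\in\Gamma$ with $f+g\in L_{q^{\ast}}(\Omega)$; this reduces to showing that both $f$ and $g$ lie in a common $L_{q^{\ast}}(\Omega)$ with $q^{\ast}\in\{q_{1},q_{2}\}$, after which closure of $L_{q^{\ast}}(\Omega)$ under addition finishes the argument. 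In the case $\Gamma\subset\lbrack p,\infty]$ I would set $q^{\ast}:=\min\{q_{1},q_{2}\}$, say $q^{\ast}=q_{1}\leq q_{2}$. Then $q^{\ast}$ is already the index of $f$, while for $g$ we have $g\in L_{q_{2}}(\Omega)\cap L_{p}(\Omega)$ with $p\leq q^{\ast}\leq q_{2}$, so Proposition \ref{p1} (interpolation) gives $g\in L_{q^{\ast}}(\Omega)$. Thus $f,g\in L_{q^{\ast}}(\Omega)$, whence $f+g\in L_{q^{\ast}}(\Omega)\cap L_{p}(\Omega)\subset A$.

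The case $\Gamma\subset(0,p]$ is entirely symmetric: I would instead take $q^{\ast}:=\max\{q_{1},q_{2}\}$, apply Proposition \ref{p1} to the function carrying the smaller index (now using $q_{\min}\leq q^{\ast}\leq p$) to place it in $L_{q^{\ast}}(\Omega)$, and conclude as before. The step I expect to be the crux is precisely the selection of the common index $q^{\ast}$, because this is the only place the hypothesis is used: the assumption that $\Gamma$ lies entirely on one side of $p$ forces $p$ to be an extreme point relative to $\Gamma$, guaranteeing that the interpolation interval determined by $p$ and the remoter index always contains $q^{\ast}$. I would emphasize in a closing remark that if $\Gamma$ straddled $p$ this mechanism breaks down and the set need not be a subspace, which clarifies why the one-sided condition cannot be dropped.
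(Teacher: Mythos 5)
Your proof is correct and is essentially the paper's argument: both rest on the same decomposition $\bigcup_{q\in\Gamma}\left(L_{q}(\Omega)\cap L_{p}(\Omega)\right)$ and use the interpolation result (Proposition \ref{p1}) together with the one-sidedness of $\Gamma$ relative to $p$ to place two given functions in a common $L_{q^{\ast}}(\Omega)\cap L_{p}(\Omega)$. The paper merely packages this same step abstractly, observing that the family $\left\{L_{q}(\Omega)\cap L_{p}(\Omega)\right\}_{q\in\Gamma}$ is totally ordered by inclusion so that the union of the chain is automatically a subspace, whereas you unfold it into a direct verification of closure under addition --- a cosmetic, not substantive, difference.
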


\begin{proof}
Obviously,
\[
\left(  \bigcup_{q\in\Gamma}L_{q}(\Omega)\right)  \cap L_{p}(\Omega
)=\bigcup_{q\in\Gamma}\left(  L_{q}(\Omega)\cap L_{p}(\Omega)\right)
\text{.}
\]
Let $q_{1},q_{2}\in\Gamma$, with $q_{1}<q_{2}$. If $\Gamma\subset\left[
p,\infty\right]  $, it follows from Proposition \ref{p1} that $L_{q_{2}
}(\Omega)\cap L_{p}(\Omega)\subset L_{q_{1}}(\Omega)$, that is,
\[
L_{q_{2}}(\Omega)\cap L_{p}(\Omega)\subset L_{q_{1}}(\Omega)\cap L_{p}
(\Omega)\text{.}
\]
Analogously, if $\Gamma\subset(0,p]$, we conclude that
\[
L_{q_{1}}(\Omega)\cap L_{p}(\Omega)\subset L_{q_{2}}(\Omega)\cap L_{p}
(\Omega)\text{.}
\]
This shows that, in any case, $\{L_{q_{2}}(\Omega)\cap L_{p}(\Omega
)\}_{q\in\Gamma}$ is a well ordered family with respect to the inclusion
relation and, therefore, the union of its members is a subspace.
\end{proof}

\begin{remark}
\label{r2}If $p\geq1$ and
\[
\emptyset\neq\Gamma\subset%
\begin{cases}
(1,\infty]\text{,} & \text{if }p=1\text{,}\\[1pt]%
\left[  1,p\right)  \text{ or }\left(  p,\infty\right]  \text{,} & \text{if
}p>1\text{,}%
\end{cases}
\]
notice that
\[
L_{p}(\Omega)\smallsetminus\bigcup_{q\in\Gamma}L_{q}(\Omega)=L_{p}
(\Omega)\smallsetminus\bigcup_{q\in\Gamma}\left(  L_{q}(\Omega)\cap
L_{p}(\Omega)\right)  \text{,}
\]
and $L_{q}(\Omega)\cap L_{p}(\Omega)$ is a Banach space when endowed with the
norm $\Vert\cdot\Vert_{p,q}=\Vert\cdot\Vert_{p}+\Vert\cdot\Vert_{q}$. As we
saw in Proposition \ref{p2}, the collection $\{L_{q}(\Omega)\cap L_{p}
(\Omega)\}_{q\in\Gamma}$ is well ordered and this allows us to consider a
subset $\{q_{r}\}_{r\in\mathbb{N}}\subset\Gamma$ such that
\[
\bigcup_{q\in\Gamma}\left(  L_{q}(\Omega)\cap L_{p}(\Omega)\right)
=\bigcup_{r\in\mathbb{N}}\left(  L_{q_{r}}(\Omega)\cap L_{p}(\Omega)\right)
\text{.}
\]

\end{remark}

From Remark \ref{r2} we have the following result:

\begin{corollary}
\label{c5}Let $(\Omega,\mathcal{M},\mu)$ be a measure space. Let $p\geq1$ and
\[
\emptyset\neq\Gamma\subset%
\begin{cases}
(1,\infty]\text{,} & \text{if }p=1\text{,}\\[1pt]%
\left[  1,p\right)  \text{ or }\left(  p,\infty\right]  \text{,} & \text{if
}p>1\text{.}%
\end{cases}
\]
If $L_{p}(\Omega)\smallsetminus\bigcup_{q\in\Gamma}L_{q}(\Omega)$ is lineable,
then
\[
L_{p}(\Omega)\smallsetminus\bigcup_{q\in\Gamma}L_{q}(\Omega)
\]
is $(\alpha,\mathfrak{c})$-spaceable if and only if $\alpha<\aleph_{0}$.
\end{corollary}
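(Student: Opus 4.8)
The plan is to reduce the statement to a direct application of Corollary~\ref{Corollary 2.7}. I would set $V=L_p(\Omega)$, which is a Banach space since $p\geq 1$, and write $U:=\bigcup_{q\in\Gamma}\bigl(L_q(\Omega)\cap L_p(\Omega)\bigr)$. By Proposition~\ref{p2}, $U$ is a subspace of $V$, and by Remark~\ref{r2} I may fix a countable subset $\{q_r\}_{r\in\mathbb{N}}\subset\Gamma$ and set $W_r:=L_{q_r}(\Omega)\cap L_p(\Omega)$ so that $U=\bigcup_{r=1}^{\infty}W_r$; the same remark records the crucial rewriting
\[
L_p(\Omega)\smallsetminus\bigcup_{q\in\Gamma}L_q(\Omega)=V\smallsetminus U.
\]
Thus it suffices to prove that $V\smallsetminus\bigcup_{r=1}^{\infty}W_r$ is $(\alpha,\mathfrak{c})$-spaceable if and only if $\alpha<\aleph_0$, which is exactly the conclusion of Corollary~\ref{Corollary 2.7}, provided its hypotheses hold for the family $\{W_r\}$.

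Next I would verify the two structural hypotheses of Corollary~\ref{Corollary 2.7}. The norm requirement is immediate: by Remark~\ref{r2} each $W_r=L_{q_r}(\Omega)\cap L_p(\Omega)$ is Banach under $\Vert\cdot\Vert_{p,q_r}=\Vert\cdot\Vert_p+\Vert\cdot\Vert_{q_r}$, and since $\Vert f\Vert_p\leq\Vert f\Vert_p+\Vert f\Vert_{q_r}=\Vert f\Vert_{p,q_r}$ for every $f\in W_r$, the domination $\Vert\cdot\Vert_V\leq C_r\Vert\cdot\Vert_{W_r}$ holds with $C_r=1$. The only remaining hypothesis is that $U=\bigcup_{r=1}^{\infty}W_r$ has infinite codimension in $V$, and this is precisely the point at which the lineability assumption is used.

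The main (and only substantive) step is therefore to extract infinite codimension from lineability. Since $V\smallsetminus U$ is lineable, there is an infinite-dimensional subspace $M\subset V$ with $M\smallsetminus\{0\}\subset V\smallsetminus U$, equivalently $M\cap U=\{0\}$. Letting $\pi\colon V\to V/U$ be the quotient map, the fact that $\ker\pi=U$ meets $M$ only at $0$ makes the restriction $\pi|_M$ injective, so $V/U$ contains a copy of $M$ and hence $\dim(V/U)\geq\dim(M)\geq\aleph_0$. This gives the infinite codimension of $U$, completing the verification of the hypotheses of Corollary~\ref{Corollary 2.7}; applying that corollary to $\{W_r\}$ and recalling $V\smallsetminus\bigcup_{r=1}^{\infty}W_r=L_p(\Omega)\smallsetminus\bigcup_{q\in\Gamma}L_q(\Omega)$ yields the claim. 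I do not anticipate a genuine obstacle: once the reduction via Proposition~\ref{p2} and Remark~\ref{r2} is in place, the argument is routine, the lineability-to-infinite-codimension implication being the single idea that turns the hypothesis into something usable.
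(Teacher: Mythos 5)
Your proposal is correct and follows essentially the same route as the paper: both reduce the statement to Corollary~\ref{Corollary 2.7} via Proposition~\ref{p2} and Remark~\ref{r2}, using the rewriting $L_p(\Omega)\smallsetminus\bigcup_{q\in\Gamma}L_q(\Omega)=L_p(\Omega)\smallsetminus\bigcup_{r\in\mathbb{N}}\bigl(L_{q_r}(\Omega)\cap L_p(\Omega)\bigr)$ and the norms $\Vert\cdot\Vert_{p,q_r}$. The only difference is that you spell out the step the paper leaves implicit --- that lineability of the complement forces $\bigcup_{q\in\Gamma}\bigl(L_q(\Omega)\cap L_p(\Omega)\bigr)$ to have infinite codimension, via injectivity of the quotient map on the lineability witness --- which is a correct and welcome clarification rather than a deviation.
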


\begin{proof}
From Remark \ref{r2} we know that:

\begin{enumerate}
\item[$\left(  i\right)  $] $(W_{q},\Vert\cdot\Vert_{p,q})=(L_{q}(\Omega)\cap
L_{p}(\Omega),\Vert\cdot\Vert_{p,q})$ is a Banach space for each $q\in\Gamma$;

\item[$\left(  ii\right)  $] $\Vert\cdot\Vert_{p}\leq\Vert\cdot\Vert_{p,q}$;

\item[$\left(  iii\right)  $] There is a subset $\{q_{r}\}_{r\in\mathbb{N}
}\subset\Gamma$ such that $\bigcup_{q\in\Gamma}W_{q}=\bigcup_{r\in\mathbb{N}
}W_{q_{r}}$.
\end{enumerate}

Hence, by Corollary \ref{Corollary 2.7} we can conclude that
\[
L_{p}(\Omega)\smallsetminus\bigcup_{q\in\Gamma}L_{q}(\Omega)=L_{p}
(\Omega)\smallsetminus\bigcup_{q\in\Gamma}W_{q}=L_{p}(\Omega)\smallsetminus
\bigcup_{r\in\mathbb{N}}W_{q_{r}}
\]
is $(\alpha,\mathfrak{c})$-spaceable for all $\alpha<\aleph_{0}$.
\end{proof}

Theorem \ref{t29} below plays the role of a first complementary result of
Theorem \ref{Theorem 2.4}, or, more precisely, a complement of Corollary
\ref{c5}. The proof of Theorem \ref{t29} is very similar to the proof of
the main result of \cite{ABRR}, where it is verified that \eqref{Lp-Lq} is
also valid for $0<p<1$. We recently realized that the main result of
\cite{ABRR} could be generalized and that one could extract from this
generalization several interesting applications. For the sake of completeness,
we will present the proof of Theorem \ref{t29}.

\begin{theorem}
\label{t29}Let $p\in(0,\infty)$ and $\emptyset\neq\Gamma\subset(0,\infty
]\smallsetminus\{p\}$. If $(\Omega,\mathcal{M},\mu)$ is a measure space such
that there exists $\{\Omega_{i}\}_{i\in\mathbb{N}}\subset\mathcal{M}$ with
$\Omega_{i}\cap\Omega_{j}=\emptyset$ to $i\neq j$, $\Omega=\bigcup
_{i\in\mathbb{N}}\Omega_{i}$ and $L_{p}(\Omega_{i})\smallsetminus
\operatorname{span}\left\{  \bigcup_{q\in\Gamma}L_{q}(\Omega_{i})\cap
L_{p}(\Omega_{i})\right\}  $ lineable for each $i\in\mathbb{N}$, then
$L_{p}(\Omega)\smallsetminus\bigcup_{q\in\Gamma}L_{q}(\Omega)$ is $(\alpha,\mathfrak{c})$-spaceable if and only if $\alpha<\aleph_{0}$.
\end{theorem}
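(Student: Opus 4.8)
The plan is to prove the two implications separately, the reverse one ($\alpha\ge\aleph_{0}\Rightarrow$ not $(\alpha,\mathfrak{c})$-spaceable) being routine and the forward one ($\alpha<\aleph_{0}\Rightarrow(\alpha,\mathfrak{c})$-spaceable) carrying all the content. For the reverse implication I would argue exactly as in the paragraph following \eqref{Lp-Lq}: writing the removed set as a countable union of subspaces (split $\Gamma$ into $\Gamma\cap(p,\infty]$ and $\Gamma\cap(0,p)$ and use the well-ordering of Remark \ref{r2} on each side to extract sequences $\{q_{r}\}$), one gets $\bigcup_{q\in\Gamma}L_{q}(\Omega)\cap L_{p}(\Omega)=\bigcup_{r}W_{r}$ with $W_{r}$ subspaces, and then \cite[Corollary 2.3]{fprs} yields that the complement is not $(\alpha,\mathfrak{c})$-spaceable whenever $\alpha\ge\aleph_{0}$.

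For the forward implication, fix $\alpha=n<\aleph_{0}$ and write $A:=L_{p}(\Omega)\smallsetminus\bigcup_{q\in\Gamma}L_{q}(\Omega)$ and $M_{i}:=\operatorname{span}\{\bigcup_{q\in\Gamma}L_{q}(\Omega_{i})\cap L_{p}(\Omega_{i})\}$. The one fact I would use repeatedly is that restriction is bounded, so $f\in L_{q}(\Omega)$ forces $f|_{\Omega_{i}}\in L_{q}(\Omega_{i})$; hence to certify $f\notin L_{q}(\Omega)$ it suffices to exhibit a single block $\Omega_{i_{0}}$ with $f|_{\Omega_{i_{0}}}\notin L_{q}(\Omega_{i_{0}})$. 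The backbone is a disjoint-support embedding: the hypothesis gives, for each $i$, an infinite-dimensional $Z_{i}\subset L_{p}(\Omega_{i})$ with $Z_{i}\cap M_{i}=\{0\}$; I would pick $\psi_{i}\in Z_{i}$ with $\int_{\Omega_{i}}|\psi_{i}|^{p}\,d\mu=1$, extend it by $0$ to $\widetilde\psi_{i}\in L_{p}(\Omega)$, and check that $a\mapsto g_{a}:=\sum_{i}a_{i}\widetilde\psi_{i}$ is an isometry of $\ell_{p}$ (of the translation-invariant metric $\sum|a_{i}|^{p}$ when $0<p<1$) onto a closed $\mathfrak{c}$-dimensional subspace $G$ of $L_{p}(\Omega)$, disjointness giving $\int_{\Omega}|g_{a}|^{p}=\sum_{i}|a_{i}|^{p}$. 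Since $\psi_{i}\in L_{p}(\Omega_{i})\smallsetminus M_{i}$ forces $\psi_{i}\notin L_{q}(\Omega_{i})$ for every $q\in\Gamma$, every nonzero $g_{a}$ already lies in $A$; this alone gives $\mathfrak{c}$-spaceability.

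To upgrade to $(\alpha,\mathfrak{c})$-spaceability I must absorb an arbitrary $F_{\alpha}=\operatorname{span}\{h_{1},\dots,h_{n}\}\subset A\cup\{0\}$, and here lies the main obstacle: for $g_{a}\ne0$ and $h\in F_{\alpha}$ I need $h+g_{a}\notin L_{q}(\Omega)$, but on the block $\Omega_{i_{0}}$ where $a_{i_{0}}\ne0$ the restriction $h|_{\Omega_{i_{0}}}+a_{i_{0}}\psi_{i_{0}}$ is a sum of two functions that may both fail to be in $L_{q}(\Omega_{i_{0}})$, so cancellation must be ruled out. The fix is to choose $\psi_{i}$ more carefully: since $Z_{i}$ is infinite-dimensional while $M_{i}+\operatorname{span}\{h_{1}|_{\Omega_{i}},\dots,h_{n}|_{\Omega_{i}}\}$ meets $Z_{i}$ in a subspace of dimension at most $n$ (project modulo $M_{i}$), I can take $\psi_{i}\in Z_{i}$ outside $M_{i}+\operatorname{span}\{h_{j}|_{\Omega_{i}}\}$ and normalized. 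Then $a_{i_{0}}\psi_{i_{0}}\notin L_{q}(\Omega_{i_{0}})+\operatorname{span}\{h_{j}|_{\Omega_{i_{0}}}\}$ for every $q\in\Gamma$, so $h|_{\Omega_{i_{0}}}+a_{i_{0}}\psi_{i_{0}}\notin L_{q}(\Omega_{i_{0}})$ and hence $h+g_{a}\notin L_{q}(\Omega)$; in particular $h+g_{a}\ne0$, which simultaneously yields $F_{\alpha}\cap G=\{0\}$.

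Finally I would set $F_{\beta}:=F_{\alpha}+G$ and verify the requirements. Every nonzero element is either some $h\in F_{\alpha}\smallsetminus\{0\}$ (in $A$ by hypothesis) or has $g_{a}\ne0$ (in $A$ by the previous step), so $F_{\beta}\smallsetminus\{0\}\subset A$; the dimension is $n+\mathfrak{c}=\mathfrak{c}$; and $F_{\beta}$ is closed because $G$ is closed and $F_{\alpha}$ is finite-dimensional, so the quotient map $V\to V/G$ argument used in the proof of Theorem \ref{Theorem 2.4} shows the sum is closed. The argument is uniform in $q$ (including $q=\infty$, where $\psi_{i}\notin M_{i}$ gives essential unboundedness on $\Omega_{i}$) and in $p\in(0,\infty)$, the only adjustment for $0<p<1$ being to read $\Vert\cdot\Vert_{p}$ as the quasi-norm; this mirrors the structure of the proof of the main result of \cite{ABRR}.
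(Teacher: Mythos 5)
Your proof is correct, and on the hard direction it takes a genuinely shorter route than the paper. Both arguments share the same skeleton: blockwise, one chooses a normalized $\psi_{i}$ (the paper's $\tilde{f}_{i}$) in the lineability subspace avoiding $M_{i}+\operatorname{span}\{h_{1}|_{\Omega_{i}},\ldots,h_{n}|_{\Omega_{i}}\}$ --- your condition $\psi_{i}\notin M_{i}+\operatorname{span}\{h_{j}|_{\Omega_{i}}\}$ is literally equivalent to the paper's $\tilde{f}_{k}\in V_{k}\smallsetminus P_{k}\bigl(\operatorname{span}\{g_{j}|_{\Omega_{k}}\}\bigr)$, since $V_{k}\cap\bigl(M_{k}+\operatorname{span}\{g_{j}|_{\Omega_{k}}\}\bigr)=P_{k}\bigl(\operatorname{span}\{g_{j}|_{\Omega_{k}}\}\bigr)$ --- and then one certifies $h+g_{a}\notin L_{q}(\Omega)$ by restricting to a block where $a_{i_{0}}\neq0$, exactly the paper's \eqref{eq}. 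The divergence is in how the closed $\mathfrak{c}$-dimensional space is produced. The paper defines $T$ on $\ell_{\tilde{p}}$ with the $g$-coordinates built in, takes $F_{\beta}=\overline{T(\ell_{\tilde{p}})}$, and must then characterize the closure: this is the bulk of its proof (Lemma \ref{lema} on eventual linear independence of restrictions, the pigeonhole over minimal dependent subsets, and the stabilization $a_{j}(m)=a_{j}(\tilde{m})$ showing every limit has the form $\sum a_{i}g_{i}+\sum a_{n+i}f_{i}$ with a priori arbitrary scalar coefficients). You observe instead that the disjointly supported normalized $\psi_{i}$ make $a\mapsto\sum_{i}a_{i}\widetilde{\psi}_{i}$ an isometric embedding of $\ell_{\tilde{p}}$, so $G$ is complete, hence closed, and $F_{\beta}=F_{\alpha}+G$ is closed by the finite-dimensional-plus-closed quotient argument (which works verbatim in the $F$-space $L_{p}$, $0<p<1$). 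In effect you show that $T(\ell_{\tilde{p}})=F_{\alpha}+G$ was already closed, so the paper's entire closure-characterization machinery is avoidable; your version is shorter and, to my eye, more transparent, at no loss of generality.

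Two small points to make explicit if you write this up. First, for the reverse implication you should note (as the paper's remark after the theorem does) that the lineability hypothesis forces $\Gamma\subset(0,p)$ or $\Gamma\subset(p,\infty]$, so the family $\{L_{q}(\Omega)\cap L_{p}(\Omega)\}_{q\in\Gamma}$ is nested and the countable extraction comes from Proposition \ref{p2}, which holds for all $p>0$; Remark \ref{r2} as stated is restricted to $p\geq1$, and the applicability of \cite[Corollary 2.3]{fprs} in the quasi-Banach range $0<p<1$ is being taken on the same footing as the paper itself takes it (the paper's proof likewise leaves the only-if direction implicit). Second, in the case distinction for nonzero $x=h+g_{a}\in F_{\beta}$, the representation need not be assumed unique: if $g_{a}\neq0$ your blockwise argument puts $x$ in the complement (in particular $x\neq0$, which also yields $F_{\alpha}\cap G=\{0\}$), and if $g_{a}=0$ then $x=h\in F_{\alpha}\smallsetminus\{0\}$; you say this correctly, but it is worth keeping in that order so the disjointness of $F_{\alpha}$ and $G$ is a consequence rather than an assumption.
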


\begin{remark}
It is worth mentioning that a set $\Gamma$ of indexes as in Theorem \ref{t29}
must, necessarily, be contained in $(0,p)$ or $(p,\infty]$, because if we have
$q,r\in\Gamma$, with $q<p<r$, then for $f\in L_{p}(\Omega_{i})$ we can
consider $g=f\cdot\mathcal{X}_{E}$ and $h=f\cdot\mathcal{X}_{\Omega
_{i}\smallsetminus E}$ with $E=\{x\in\Omega_{i}:|f(x)|>1\}$ \textrm{(}where
$\mathcal{X}_{A}$ denotes the characteristic function of $A$\textrm{)}.
Obviously $g\in L_{q}(\Omega_{i})\cap L_{p}(\Omega_{i})$ and $h\in
L_{r}(\Omega_{i})\cap L_{p}(\Omega_{i})$ with $f=g+h$. This shows us that
\begin{align*}
L_{p}(\Omega_{i})  &  =\left(  \bigcup_{q\in\Gamma\cap(0,p)}L_{q}(\Omega
_{i})\cap L_{p}(\Omega_{i})\right)  +\left(  \bigcup_{r\in\Gamma\cap
(p,\infty]}L_{r}(\Omega_{i})\cap L_{p}(\Omega_{i})\right) \\
&  =\operatorname{span}\left\{  \bigcup_{q\in\Gamma}L_{q}(\Omega_{i})\cap
L_{p}(\Omega_{i})\right\}  \text{.}%
\end{align*}

\end{remark}

\begin{proof}
[Proof of Theorem \ref{t29}]Let $g_{1},\ldots,g_{n}\in L_{p}(\Omega)$ be
linearly independent normalized vectors so that $\operatorname{span}%
\{g_{1},\ldots,g_{n}\}\smallsetminus\{0\}\subset L_{p}(\Omega)\smallsetminus
\bigcup_{q\in\Gamma}L_{q}(\Omega)$. For every $k\in\mathbb{N}$ consider a subspace 
$V_{k}$ of $L_{p}(\Omega_{k})$ so that
\[
L_{p}(\Omega_{k})=V_{k}\oplus\operatorname{span}\left\{  \bigcup_{q\in\Gamma
}L_{q}(\Omega_{k})\cap L_{p}(\Omega_{k})\right\}  \text{,}%
\]
and $P_{k}\colon L_{p}(\Omega_{k})\rightarrow V_{k}$ the projection of
$L_{p}(\Omega_{k})$ over $V_{k}$. Since \( P_{k}\left(\operatorname{span}\{g_{1}|_{\Omega{k}}, \ldots, g_{n}|_{\Omega{k}}\}\right) \) is a subspace of \( L_{p}(\Omega_{k}) \) of dimension at most \( n \), by the hypothesis that \( L_{p}(\Omega_{i}) \smallsetminus \operatorname{span}\left\{ \bigcup_{q\in\Gamma} L_{q}(\Omega_{i}) \cap L_{p}(\Omega_{i})\right\} \) is lineable, i.e.,
\[
\mathrm{codim}\left(\operatorname*{span}\{g_{1}|_{\Omega{k}}, \ldots, g_{n}|_{\Omega{k}}\}\right) = \mathrm{dim}\left(\mathrm{V}_k\right) \geq \aleph_{0},
\]
we can choose  a unit vector $\overset{\sim}{f_{k}}$ in
\[
V_{k}\smallsetminus P_{k}\left(  \operatorname{span}\{g_{1}|_{\Omega_{k}%
},\ldots,g_{n}|_{\Omega_{k}}\}\right).
\]
Let us prove that, for all $a_{1},\ldots,a_{n}\in\mathbb{K}$, we have
\begin{equation}
\tilde{f}_{k}+\sum_{i=1}^{n}a_{i}g_{i}|_{\Omega_{k}}\notin\bigcup_{q\in\Gamma
}L_{q}(\Omega_{k})\text{.} \label{eq}%
\end{equation}%
\color{black}%
Indeed, if there exist
\color{black}
$a_{1},\ldots,a_{n}\in\mathbb{K}$ such that $\tilde{f}_{k}+\sum_{i=1}^{n}%
a_{i}g_{i}|_{\Omega_{k}}\in\bigcup_{q\in\Gamma}L_{q}(\Omega_{k})$, since
\[
\tilde{f}_{k}+\sum_{i=1}^{n}a_{i}g_{i}|_{\Omega_{k}}=\tilde{f}_{k}%
+P_{k}\left(  \sum_{i=1}^{n}a_{i}g_{i}|_{\Omega_{k}}\right)  +\left(
-P_{k}\left(  \sum_{i=1}^{n}a_{i}g_{i}|_{\Omega_{k}}\right)  +\sum_{i=1}%
^{n}a_{i}g_{i}|_{\Omega_{k}}\right)  \text{,}%
\]
we would conclude that $\tilde{f}_{k}+P_{k}\left(  \sum_{i=1}^{n}a_{i}%
g_{i}|_{\Omega_{k}}\right)  =0$ and, hence, $\tilde{f}_{k}\in P_{k}\left(
\operatorname*{span}\{g_{1}|_{\Omega_{k}},\ldots,g_{n}|_{\Omega_{k}}\}\right)
$, which we know
\color{black}
does not
\color{black}
happen.

Define $\tilde{p}=1$ if $p\geq1$ and $\tilde{p}=p$ if $0<p<1$. Furthermore,
consider $f_{k}\in L_{p}(\Omega)\smallsetminus\bigcup_{q\in\Gamma}L_{q}%
(\Omega)$ where%
\[
\text{%
\color{black}%
}f_{k}:=\left\{
\begin{array}
[c]{ccc}%
0\text{,} & \text{in} & \Omega\smallsetminus\Omega_{k}\text{,}\\
\tilde{f}_{k}, & \text{in} & \Omega_{k}\text{.}%
\end{array}
\right.  \text{
\color{black}%
}%
\]
For $\left(  a_{i}\right)  _{i=1}^{\infty}\in\ell_{\tilde{p}}$,
\color{black}%
we have
\color{black}
\[
\Vert a_{1}g_{1}\Vert_{p}^{\tilde{p}}+\cdots+\Vert a_{n}g_{n}\Vert_{p}%
^{\tilde{p}}+\sum_{i=n+1}^{\infty}\Vert a_{i}f_{i-n}\Vert_{p}^{\tilde{p}}%
=\sum_{i=1}^{\infty}|a_{i}|^{\tilde{p}}<\infty\text{.}%
\]
Since $L_{p}(\Omega)$ is a Banach space for $p\geq1$ and a quasi-Banach space
for $0<p<1$, it follows that $a_{1}g_{1}+\cdots+a_{n}g_{n}+\sum_{i=n+1}%
^{\infty}a_{i}f_{i-n}\in L_{p}(\Omega)$ and, thus, the operator
\[
T\colon\ell_{\tilde{p}}\rightarrow L_{p}(\Omega)\text{, \ \ }\ \ T\left(
\left(  a_{i}\right)  _{i=1}^{\infty}\right)  =a_{1}g_{1}+\cdots+a_{n}%
g_{n}+\sum_{i=n+1}^{\infty}a_{i}f_{i-n}%
\]
is well defined. Note that its
linearity is evident, while its continuity follows from the closed graph theorem for F-spaces. For an arbitrary function $f\colon X\rightarrow\mathbb{K}$
whose domain is an arbitrary set $X$, let $\operatorname{supp}\left(
f\right)  =\{x\in X:f(x)\neq0\}$. Since $\operatorname{supp}(f_{i}%
)\cap\operatorname{supp}(f_{j})=\varnothing$ for $i\neq j$, we can conclude
that $T(\ell_{\tilde{p}})$ has infinite dimension.

Below we will show that there exist%
\color{black}%
s
\color{black}
a positive integer $m_{0}$ such that
\[
\left\{  g_{1}|_{%
{\textstyle\bigcup\nolimits_{i=1}^{m_{0}}}
\Omega_{i}},\ldots,g_{n}|_{%
{\textstyle\bigcup_{i=1}^{m_{0}}}
\Omega_{i}},f_{1}|_{%
{\textstyle\bigcup_{i=1}^{m_{0}}}
\Omega_{i}},\ldots,f_{m_{0}}|_{%
{\textstyle\bigcup_{i=1}^{m_{0}}}
\Omega_{i}}\right\}
\]
is a linearly independent set in $L_{p}(\bigcup_{i=1}^{m_{0}}\Omega_{i})$. We
first need to prove the following lemma:

\begin{lemma}
\label{lema}There exist%
\color{black}%
s
\color{black}
a positive integer $m_{1}$ such that
\[
\left\{  g_{1}|_{%
{\textstyle\bigcup_{i=1}^{m_{1}}}
\Omega_{i}},\ldots,g_{n}|_{%
{\textstyle\bigcup_{i=1}^{m_{1}}}
\Omega_{i}}\right\}
\]
is a linearly independent set in $L_{p}(\bigcup_{i=1}^{m_{1}}\Omega_{i})$.
\end{lemma}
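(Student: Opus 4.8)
The plan is to run a stabilization argument on the finite-dimensional spaces of coefficients that annihilate the restrictions. Write $D_m := \bigcup_{i=1}^{m}\Omega_i$, so that $(D_m)_{m\in\mathbb{N}}$ is an increasing sequence of measurable sets with $\bigcup_{m}D_m = \Omega$. For each $m$, set
\[
N_m := \left\{(a_1,\ldots,a_n)\in\mathbb{K}^n : \textstyle\sum_{j=1}^n a_j\, g_j = 0 \text{ $\mu$-a.e.\ on } D_m\right\}.
\]
Each $N_m$ is a linear subspace of $\mathbb{K}^n$, and since $D_m\subset D_{m+1}$, any combination vanishing a.e.\ on $D_{m+1}$ also vanishes a.e.\ on $D_m$; hence the chain is decreasing, $N_{m+1}\subseteq N_m$. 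The crucial observation is that $\{g_1|_{D_m},\ldots,g_n|_{D_m}\}$ is linearly independent in $L_p(D_m)$ precisely when $N_m=\{0\}$, so it suffices to exhibit an index $m_1$ with $N_{m_1}=\{0\}$.

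First I would verify that $\bigcap_m N_m = \{0\}$. If $(a_1,\ldots,a_n)\in\bigcap_m N_m$, then $\sum_j a_j g_j$ vanishes $\mu$-a.e.\ on every $D_m$. Since each point of $\Omega$ lies in some $\Omega_i$, and hence in $D_m$ for all $m\geq i$, the set $\{x\in\Omega : \sum_j a_j g_j(x)\neq 0\}$ is the countable union of its intersections with the $D_m$, each of which is $\mu$-null; by countable subadditivity this set is $\mu$-null, so $\sum_j a_j g_j = 0$ $\mu$-a.e.\ on $\Omega$. The linear independence of $g_1,\ldots,g_n$ in $L_p(\Omega)$ then forces $(a_1,\ldots,a_n)=0$. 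Note that this step is purely measure-theoretic and algebraic, so it is insensitive to whether $p\geq 1$ (norm) or $0<p<1$ (quasi-norm).

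Finally, since $(N_m)_m$ is a decreasing chain of subspaces of the finite-dimensional space $\mathbb{K}^n$, the integers $\dim N_m$ form a non-increasing sequence bounded below by $0$, hence eventually constant; once $\dim N_m$ is constant the inclusions $N_{m+1}\subseteq N_m$ become equalities, so the chain stabilizes at some $m_1$ with $N_{m_1}=N_{m_1+1}=\cdots=\bigcap_m N_m=\{0\}$. For this $m_1$ the condition $N_{m_1}=\{0\}$ is exactly the asserted linear independence of $\{g_1|_{D_{m_1}},\ldots,g_n|_{D_{m_1}}\}$ in $L_p(D_{m_1})$, which completes the proof. The only delicate point is the identity $\bigcap_m N_m=\{0\}$, where one uses both that the $\Omega_i$ exhaust $\Omega$ (so the $D_m$ increase to $\Omega$) and the a.e.\ linear independence of the $g_j$ on the full space; the stabilization step is then an immediate finite-dimensional observation.
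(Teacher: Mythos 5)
Your proof is correct, and it takes a genuinely different route from the paper's. The paper argues by contradiction: assuming dependence of the restrictions for every $m$, it selects for each $m$ a \emph{minimal} linearly dependent subset of $\{g_1|_{D_m},\ldots,g_n|_{D_m}\}$, uses a pigeonhole argument on $\mathcal{P}(\{1,\ldots,n\})$ to find infinitely many $m$ sharing the same minimal subset, exploits minimality to show the dependence coefficients $b_1,\ldots,b_{r-1}$ are the same for all such $m$, and finally passes to the limit using the norm convergence $g_j|_{D_m}\to g_j$ in $L_p(\Omega)$ to contradict the independence of $\{g_1,\ldots,g_n\}$. You replace all of this bookkeeping with a single linear-algebraic observation: the coefficient null spaces $N_m\subset\mathbb{K}^n$ form a decreasing chain, so they stabilize by finite-dimensionality, and the stabilized value equals $\bigcap_m N_m$, which you show is trivial by pure measure theory (countable subadditivity over the exhaustion $D_m\uparrow\Omega$, since a combination vanishing $\mu$-a.e.\ on every $D_m$ vanishes $\mu$-a.e.\ on $\Omega$). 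Your version is shorter and strictly more robust: it never invokes convergence in the $L_p$ (quasi-)norm, only the a.e.\ meaning of equality in $L_p$, so it works uniformly for all $0<p\le\infty$ and indeed in any space of measurable functions modulo null sets, whereas the paper's limit step implicitly relies on $p<\infty$ (via dominated convergence of the restrictions). The one point worth stating explicitly, which you do handle correctly, is that once $\dim N_m$ is eventually constant the inclusions $N_{m+1}\subseteq N_m$ become equalities, so $N_{m_1}=\bigcap_m N_m=\{0\}$ for the stabilization index $m_1$; this is exactly the asserted independence.
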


\begin{proof}
[Proof of Lemma \ref{lema}]Fix $j\in\{1,\ldots,n\}$. Since $g_{j}%
|_{\textstyle\bigcup_{i=1}^{m}\Omega_{i}}\overset{m\rightarrow\infty}{\longrightarrow
}g_{j}$ in $L_{p}(\Omega)$, we have, $g_{j}|_{\textstyle\bigcup_{i=1}^{m}\Omega_{i}}%
\neq0$ for all large enough $m$. By contradiction, suppose that there is not a
positive integer $m_{1}$ such that $\left\{  g_{1}|_{\textstyle\bigcup_{i=1}^{m_{1}%
}\Omega_{i}},\ldots,g_{n}|_{\textstyle\bigcup_{i=1}^{m_{1}}\Omega_{i}}\right\}  $ is
linearly independent in $L_{p}(\bigcup_{i=1}^{m_{1}}\Omega_{i})$. Thus, the
set
\[
\left\{  g_{1}|_{%
{\textstyle\bigcup_{i=1}^{m}}
\Omega_{i}},\ldots,g_{n}|_{\textstyle\bigcup_{i=1}^{m}\Omega_{i}}\right\}
\]
is linearly dependent on $L_{p}(\bigcup_{i=1}^{m}\Omega_{i})$ for all
$m\in\mathbb{N}$. For each $m\in\mathbb{N}$, let
\[
\left\{  g_{1(m)}|_{\textstyle\bigcup_{i=1}^{m}\Omega_{i}},\ldots,g_{r(m)}%
|_{\textstyle\bigcup_{i=1}^{m}\Omega_{i}}\right\}
\]
be a smaller linearly dependent subset of $\left\{  g_{1}|_{\textstyle\bigcup_{i=1}%
^{m}\Omega_{i}},\ldots,g_{n}|_{\textstyle\bigcup_{i=1}^{m}\Omega_{i}}\right\}  $ and
define $\varphi\colon\mathbb{N}\rightarrow\mathcal{P}(\{1,\ldots,n\})$ by
$\varphi(m)=\{1(m),\ldots,r(m)\}$, where $\mathcal{P}(\{1,\ldots,n\})$ is the
set of all subsets of $\{1,\ldots,n\}$. Since $\operatorname*{card}%
(\mathcal{P}(\{1,\ldots,n\}))<\operatorname*{card}(\mathbb{N})=\aleph_{0}$,
there is $\{j_{1},\ldots,j_{r}\}\in\varphi(\mathbb{N})$ such that
$\operatorname*{card}(\varphi^{-1}(\{j_{1},\ldots,j_{r}\}))=\aleph_{0}$.
Define $\mathbb{N}^{\prime}:=\varphi^{-1}(\{j_{1},\ldots,j_{r}\})\subset
\mathbb{N}$ and note that
\[
\left\{  g_{1(m)}|_{\textstyle\bigcup_{i=1}^{m}\Omega_{i}},\ldots,g_{r(m)}%
|_{\textstyle\bigcup_{i=1}^{m}\Omega_{i}}\right\}  =\left\{  g_{j_{1}}|_{\textstyle\bigcup
_{i=1}^{m}\Omega_{i}},\ldots,g_{j_{r}}|_{\textstyle\bigcup_{i=1}^{m}\Omega_{i}}\right\}
\text{.}%
\]
Thus, if $m,\tilde{m}\in\mathbb{N}^{\prime}$ are such that $m<\tilde{m}$, then
there are $b_{1},\ldots,b_{r-1},\tilde{b}_{1},\ldots,\tilde{b}_{r-1}%
\in\mathbb{K}$ so that
\[
g_{j_{r}}|_{\textstyle\bigcup_{i=1}^{m}\Omega_{i}}=b_{1}g_{j_{1}}|_{%
{\textstyle\bigcup_{i=1}^{m}}
\Omega_{i}}+\cdots+b_{r-1}g_{j_{r-1}}|_{%
{\textstyle\bigcup_{i=1}^{m}}
\Omega_{i}}%
\]
and
\begin{equation}
g_{j_{r}}|_{%
{\textstyle\bigcup_{i=1}^{\tilde{m}}}
\Omega_{i}}=\tilde{b}_{1}g_{j_{1}}|_{%
{\textstyle\bigcup_{i=1}^{\tilde{m}}}
\Omega_{i}}+\cdots+\tilde{b}_{r-1}g_{j_{r-1}}|_{%
{\textstyle\bigcup_{i=1}^{\tilde{m}}}
\Omega_{i}}\text{.} \label{eq3}%
\end{equation}
Restricting \eqref{eq3} to $\bigcup_{i=1}^{m}\Omega_{i}$ we get
\begin{align*}
\tilde{b}_{1}g_{j_{1}}|_{%
{\textstyle\bigcup_{i=1}^{m}}
\Omega_{i}}+\cdots+\tilde{b}_{r-1}g_{j_{r-1}}|_{%
{\textstyle\bigcup_{i=1}^{m}}
\Omega_{i}}  &  =g_{j_{r}}|_{%
{\textstyle\bigcup_{i=1}^{m}}
\Omega_{i}}\\
&  =b_{1}g_{j_{1}}|_{%
{\textstyle\bigcup_{i=1}^{m}}
\Omega_{i}}+\cdots+b_{r-1}g_{j_{r-1}}|_{%
{\textstyle\bigcup_{i=1}^{m}}
\Omega_{i}}%
\end{align*}
and consequently,
\[
(\tilde{b}_{1}-b_{1})g_{j_{1}}|_{%
{\textstyle\bigcup_{i=1}^{m}}
\Omega_{i}}+\cdots+(\tilde{b}_{r-1}-b_{r-1})g_{j_{r-1}}|_{%
{\textstyle\bigcup_{i=1}^{m}}
\Omega_{i}}=0\text{.}%
\]
Since $\left\{  g_{j_{1}}|_{%
{\textstyle\bigcup_{i=1}^{m}}
\Omega_{i}},\ldots,g_{j_{r}}|_{%
{\textstyle\bigcup_{i=1}^{m}}
\Omega_{i}}\right\}  $ is a smaller linearly dependent subset of
\[
\left\{  g_{1}|_{%
{\textstyle\bigcup_{i=1}^{m}}
\Omega_{i}},\ldots,g_{n}|_{%
{\textstyle\bigcup_{i=1}^{m}}
\Omega_{i}}\right\}
\]
we can conclude that $\tilde{b}_{k}=b_{k}$, $k=1,\ldots,r-1$. Since
$m,\tilde{m}\in\mathbb{N}^{\prime}$ are arbitrary, we obtain
\[
g_{j_{r}}|_{%
{\textstyle\bigcup_{i=1}^{m}}
\Omega_{i}}=b_{1}g_{j_{1}}|_{%
{\textstyle\bigcup_{i=1}^{m}}
\Omega_{i}}+\cdots+b_{r-1}g_{j_{r-1}}|_{%
{\textstyle\bigcup_{i=1}^{m}}
\Omega_{i}}%
\]
for all $m\in\mathbb{N}^{\prime}$.

Therefore
\begin{align*}
g_{j_{r}}  &  =\lim\limits_{m\in\mathbb{N}^{\prime}}g_{j_{r}}|_{%
{\textstyle\bigcup_{i=1}^{m}}
\Omega_{i}}\\
&  =\lim\limits_{m\in\mathbb{N}^{\prime}}\left(  b_{1}g_{j_{1}}|_{%
{\textstyle\bigcup_{i=1}^{m}}
\Omega_{i}}+\cdots+b_{r-1}g_{j_{r-1}}|_{%
{\textstyle\bigcup_{i=1}^{m}}
\Omega_{i}}\right) \\
&  =b_{1}g_{j_{1}}+\cdots+b_{r-1}g_{j_{r-1}}\text{,}%
\end{align*}
which
\color{black}%
contradicts
\color{black}
the fact that $\{g_{1},\ldots,g_{n}\}$ is linearly independent.
\end{proof}

Let us return to the proof of Theorem \ref{t29}. Let us prove that the set
\[
\left\{  g_{1}|_{%
{\textstyle\bigcup_{i=1}^{m}}
\Omega_{i}},\ldots,g_{n}|_{%
{\textstyle\bigcup_{i=1}^{m}}
\Omega_{i}},f_{1}|_{%
{\textstyle\bigcup_{i=1}^{m}}
\Omega_{i}},\ldots,f_{m}|_{%
{\textstyle\bigcup_{i=1}^{m}}
\Omega_{i}}\right\}
\]
is linearly independent in $L_{p}(\bigcup_{i=1}^{m}\Omega_{i})$ for all $m\geq
m_{0}$, where
\[
m_{0}=\min\left\{  m:\left\{  g_{1}|_{{\textstyle\bigcup_{i=1}^{m}}\Omega_{i}}%
,\ldots,g_{n}|_{{\textstyle\bigcup_{i=1}^{m}}\Omega_{i}}\right\}  \ \text{is linearly
independent in}\ L_{p}\left(  \bigcup_{i=1}^{m}\Omega_{i}\right)  \right\}
\text{.}%
\]

Given $m\geq m_{0}$, let $b_{1},\ldots,b_{n},b_{n+1},\ldots,b_{n+m}%
\in\mathbb{K}$ such that
\[
b_{1}g_{1}|_{%
{\textstyle\bigcup_{i=1}^{m}}
\Omega_{i}}+\cdots+b_{n}g_{n}|_{%
{\textstyle\bigcup_{i=1}^{m}}
\Omega_{i}}+b_{n+1}f_{1}|_{%
{\textstyle\bigcup_{i=1}^{m}}
\Omega_{i}}+\cdots+b_{n+m}f_{m}|_{%
{\textstyle\bigcup_{i=1}^{m}}
\Omega_{i}}=0\text{,}%
\]
i.e.,
\begin{equation}
b_{1}g_{1}|_{%
{\textstyle\bigcup_{i=1}^{m}}
\Omega_{i}}+\cdots+b_{n}g_{n}|_{%
{\textstyle\bigcup_{i=1}^{m}}
\Omega_{i}}=-b_{n+1}f_{1}|_{%
{\textstyle\bigcup_{i=1}^{m}}
\Omega_{i}}-\cdots-b_{n+m}f_{m}|_{%
{\textstyle\bigcup_{i=1}^{m}}
\Omega_{i}}\text{.} \label{eq5}%
\end{equation}
Restricting the equality in \eqref{eq5} to $\Omega_{j}$, $j=1,\ldots,m$, we
have
\[
b_{1}g_{1}|_{\Omega_{j}}+\cdots+b_{n}g_{n}|_{\Omega_{j}}=-b_{n+j}\tilde{f}%
_{j},
\]
\color{black}%
i.e.,
\color{black}
$-b_{n+j}\tilde{f}_{j}=P_{j}\left(  b_{1}g_{1}|_{\Omega_{j}}+\cdots+b_{n}%
g_{n}|_{\Omega_{j}}\right)  \in P_{j}\left(  \operatorname*{span}%
\{g_{1}|_{\Omega_{j}},\ldots,g_{n}|_{\Omega_{j}}\}\right)  $, and we can
conclude that $b_{n+j}=0$.
\color{black}%
From
\color{black}
\eqref{eq5} we have
\[
b_{1}g_{1}|_{%
{\textstyle\bigcup_{i=1}^{m}}
\Omega_{i}}+\cdots+b_{n}g_{n}|_{%
{\textstyle\bigcup_{i=1}^{m}}
\Omega_{i}}=0\text{,}%
\]
and from the Lemma \ref{lema} we obtain $b_{1}=\cdots=b_{n}=0$.%

\color{black}%
Now, let us see that
\color{black}
\[
\overline{T(\ell_{\tilde{p}})}\smallsetminus\{0\}\subset L_{p}(\Omega
)\smallsetminus\bigcup_{q\in\Gamma}L_{q}(\Omega)\text{.}%
\]
Indeed,
\color{black}%
given
\color{black}
$h\in\overline{T(\ell_{\tilde{p}})}\smallsetminus\{0\}$,
\color{black}%
let
\color{black}%
$\left(  a_{i}^{(k)}\right)  _{i=1}^{\infty}\in\ell_{\tilde{p}}$
($k\in\mathbb{N}$) be such that
\[
T\left(  \left(  a_{i}^{(k)}\right)  _{i=1}^{\infty}\right)  \overset
{k\rightarrow\infty}{\longrightarrow}h\ \text{in }L_{p}(\Omega)\text{.}%
\]
It is not difficult to see that $T\left(  \left(  a_{i}^{(k)}\right)
_{i=1}^{\infty}\right)  |_{X}\overset{k\rightarrow\infty}{\longrightarrow
}h|_{X}$ in $L_{p}(X)$ for any measurable subset $X$ of $\Omega$. In order to
go further, the strategy
\color{black}%
will
\color{black}
be to prove that there is a sequence of scalars $\left(  a_{i}\right)
_{i=1}^{\infty}$ such that
\[
a_{1}g_{1}+\cdots+a_{n}g_{n}+\sum_{i=1}^{\infty}a_{n+i}f_{i}=h.
\]%
\color{black}%
In fact, for a fixed $m\geq m_{0}$, note that
\color{black}
\begin{align*}
a_{1}^{(k)}g_{1}|_{%
{\textstyle\bigcup_{i=1}^{m}}
\Omega_{i}}+\cdots+a_{n}^{(k)}g_{n}|_{%
{\textstyle\bigcup_{i=1}^{m}}
\Omega_{i}}  &  +a_{n+1}^{(k)}f_{1}|_{%
{\textstyle\bigcup_{i=1}^{m}}
\Omega_{i}}+\cdots+a_{n+m}^{(k)}f_{m}|_{%
{\textstyle\bigcup_{i=1}^{m}}
\Omega_{i}}\\
&  =T\left(  \left(  a_{i}^{(k)}\right)  _{i=1}^{\infty}\right)  |_{%
{\textstyle\bigcup_{i=1}^{m}}
\Omega_{i}}\overset{k\rightarrow\infty}{\longrightarrow}h|_{%
{\textstyle\bigcup_{i=1}^{m}}
\Omega_{i}}\text{.}%
\end{align*}
Moreover, note that $\operatorname{span}\left\{  g_{1}|_{%
{\textstyle\bigcup_{i=1}^{m}}
\Omega_{i}},\ldots,g_{n}|_{%
{\textstyle\bigcup_{i=1}^{m}}
\Omega_{i}},f_{1}|_{%
{\textstyle\bigcup_{i=1}^{m}}
\Omega_{i}},\ldots,f_{m}|_{%
{\textstyle\bigcup_{i=1}^{m}}
\Omega_{i}}\right\}  $ is finite dimensional in $L_{p}(\bigcup_{i=1}^{m}%
\Omega_{i})$. Since each finite dimensional subspace of a topological vector
space is closed, there are scalars $a_{1}(m),\ldots$, $a_{n+m}(m)$ such that
\begin{align*}
&  h|_{\textstyle\bigcup_{i=1}^{m}\Omega_{i}}\\
&  =a_{1}(m)g_{1}|_{\textstyle\bigcup_{i=1}^{m}\Omega_{i}}+\cdots
+a_{n}(m)g_{n}|_{\textstyle\bigcup_{i=1}^{m}\Omega_{i}}+a_{n+1}(m)f_{1}%
|_{\textstyle\bigcup_{i=1}^{m}\Omega_{i}}+\cdots+a_{n+m}(m)f_{m}%
|_{\textstyle\bigcup_{i=1}^{m}\Omega_{i}}.
\end{align*}
\color{black}%
The same reasoning can be applied to $\tilde{m}>m$, and therefore
\color{black}
\begin{align}
h|_{%
{\textstyle\bigcup_{i=1}^{m}}
\Omega_{i}}  &  =a_{1}(m)g_{1}|_{%
{\textstyle\bigcup_{i=1}^{m}}
\Omega_{i}}+\cdots+a_{n}(m)g_{n}|_{%
{\textstyle\bigcup_{i=1}^{m}}
\Omega_{i}}+a_{n+1}(m)f_{1}|_{%
{\textstyle\bigcup_{i=1}^{m}}
\Omega_{i}}\nonumber\\
&  \qquad\qquad+\cdots+a_{n+m}(m)f_{m}|_{%
{\textstyle\bigcup_{i=1}^{m}}
\Omega_{i}}\text{,} \label{beq5}%
\end{align}
and
\begin{align}
h|_{%
{\textstyle\bigcup_{i=1}^{\tilde{m}}}
\Omega_{i}}  &  =a_{1}(\tilde{m})g_{1}|_{%
{\textstyle\bigcup_{i=1}^{\tilde{m}}}
\Omega_{i}}+\cdots+a_{n}(\tilde{m})g_{n}|_{%
{\textstyle\bigcup_{i=1}^{\tilde{m}}}
\Omega_{i}}+a_{n+1}(\tilde{m})f_{1}|_{%
{\textstyle\bigcup_{i=1}^{\tilde{m}}}
\Omega_{i}}\nonumber\\
&  \qquad\qquad+\cdots+a_{n+\tilde{m}}(\tilde{m})f_{\tilde{m}}|_{%
{\textstyle\bigcup_{i=1}^{\tilde{m}}}
\Omega_{i}}\text{.} \label{beq6}%
\end{align}
Restricting (\ref{beq6}) to $\bigcup_{i=1}^{m}\Omega_{i}$ and comparing with
(\ref{beq5}) we get
\begin{align*}
&  a_{1}(\tilde{m})g_{1}|_{%
{\textstyle\bigcup_{i=1}^{m}}
\Omega_{i}}+\cdots+a_{n}(\tilde{m})g_{n}|_{%
{\textstyle\bigcup_{i=1}^{m}}
\Omega_{i}}+a_{n+1}(\tilde{m})f_{1}|_{%
{\textstyle\bigcup_{i=1}^{m}}
\Omega_{i}}+\cdots+a_{n+m}(\tilde{m})f_{m}|_{%
{\textstyle\bigcup_{i=1}^{m}}
\Omega_{i}}\\
&  =h|_{%
{\textstyle\bigcup_{i=1}^{m}}
\Omega_{i}}\\
&  =a_{1}(m)g_{1}|_{%
{\textstyle\bigcup_{i=1}^{m}}
\Omega_{i}}+\cdots+a_{n}(m)g_{n}|_{%
{\textstyle\bigcup_{i=1}^{m}}
\Omega_{i}}+a_{n+1}(m)f_{1}|_{%
{\textstyle\bigcup_{i=1}^{m}}
\Omega_{i}}+\cdots+a_{n+m}(m)f_{m}|_{%
{\textstyle\bigcup_{i=1}^{m}}
\Omega_{i}}\text{.}%
\end{align*}
Since the set $\left\{  g_{1}|_{%
{\textstyle\bigcup_{i=1}^{m}}
\Omega_{i}},\ldots,g_{n}|_{%
{\textstyle\bigcup_{i=1}^{m}}
\Omega_{i}},f_{1}|_{%
{\textstyle\bigcup_{i=1}^{m}}
\Omega_{i}},\ldots,f_{m}|_{%
{\textstyle\bigcup_{i=1}^{m}}
\Omega_{i}}\right\}  $ is linearly independent, we obtain
\[
a_{j}(m)=a_{j}(\tilde{m})\text{ for every }j=1,\ldots,n+m\text{.}%
\]
Thus we conclude that there is a sequence of scalars $(a_{i})_{i=1}^{\infty}$
\color{black}%
such that, for each $m\geq m_{0}$,
\color{black}
\begin{align*}
\left.  \left(  a_{1}g_{1}+\cdots+a_{n}g_{n}+\sum_{i=1}^{\infty}a_{n+i}%
f_{i}\right)  \right\vert _{%
{\textstyle\bigcup_{i=1}^{m}}
\Omega_{i}}  &  =(a_{1}g_{1}+\cdots+a_{n}g_{n})|_{%
{\textstyle\bigcup_{i=1}^{m}}
\Omega_{i}}+\left.  \left(  \sum_{i=1}^{m}a_{n+i}f_{i}\right)  \right\vert _{%
{\textstyle\bigcup_{i=1}^{m}}
\Omega_{i}}\\
&  =h|_{%
{\textstyle\bigcup_{i=1}^{m}}
\Omega_{i}}%
\end{align*}%
\color{black}%
and so we finally have
\color{black}
\[
a_{1}g_{1}+\cdots+a_{n}g_{n}+\sum_{i=1}^{\infty}a_{n+i}f_{i}=h\text{.}%
\]
Since $h\neq0$, it follows that $0\neq\left(  a_{i}\right)  _{i=1}^{\infty}$.
Therefore, if $a_{n+i}=0$ for all $i\in\mathbb{N}$, we have
\[
h=a_{1}g_{1}+\cdots+a_{n}g_{n}\in\operatorname*{span}\{g_{1},\ldots
,g_{n}\}\smallsetminus\{0\}\subset L_{p}(\Omega)\smallsetminus\bigcup
_{q\in\Gamma}L_{q}(\Omega)\text{.}%
\]
On the other hand, if $a_{n+i}\neq0$ for some $i\in\mathbb{N}$, from
\eqref{eq} we obtain
\[
\frac{1}{a_{n+i}}h|_{\Omega_{i}}=\tilde{f}_{i}+\frac{1}{a_{n+i}}\left(
a_{1}g_{1}+\cdots+a_{n}g_{n}\right)  |_{\Omega_{i}}\notin\bigcup_{q\in\Gamma
}L_{q}(\Omega_{i})\text{.}%
\]
Consequently, $h\notin\bigcup_{q\in\Gamma}L_{q}(\Omega)$ and the result is done.
\end{proof}

\section{Other applications}

\label{sec4}

\subsection{$L_{p}$ spaces and continuous functions}

The Corollaries \ref{co4}, \ref{co5}, \ref{co6} and \ref{co7} below are all
consequences of Theorem \ref{Theorem 2.4} (more precisely, Corollaries
\ref{co4} and \ref{co6} are consequences of Corollary \ref{coro1}, and
Corollaries \ref{co5} and \ref{co7} of Corollary \ref{Corollary 2.7}). Although they involve classical spaces that have been extensively studied within this theory, to the best of our knowledge, these results are new. From now on, for each $k=0,1,2,\ldots
$, $\mathcal{C}^{k}[0,1]$ will denote the linear space of the $\mathbb{K}
$-valued functions over $\left[  0,1\right]  $ whose derivative of order $k$
is continuous, endowed with the norm
\[
f\longmapsto\sum_{j=0}^{k}\left\Vert f^{\left(  j\right)  }\right\Vert
_{\infty}\text{,}
\]
where $\left\Vert \cdot\right\Vert _{\infty}$ denotes the uniform convergence
norm and, as usual, $f^{\left(  0\right)  }=f$ and, if $j\geq1$, $f^{\left(
j\right)  }$ is the $j$-th derivative of $f$. Also, as usual, $\mathcal{C}
^{0}\left[  0,1\right]  =\mathcal{C}\left[  0,1\right]  $ denotes the linear
space of continuous $\mathbb{K}$-valued functions over $\left[  0,1\right]  $
and $\mathcal{P}[0,1]$ will indicate the space of polynomials defined over
$[0,1]$ endowed with the uniform convergence norm. Furthermore, making
$I=\left[  0,\infty\right)  $, $\mathcal{C}_{0}\left(  I\right)  $ will denote
the linear space of the continuous functions $f\colon I\longrightarrow
\mathbb{R}$ such that
\[
\lim_{x\rightarrow\infty}f\left(  x\right)  =0
\]
endowed with the uniform convergence norm and $R\left(  I\right)  $ will
denote the linear space of the Riemann-integrable functions $f\colon
I\longrightarrow\mathbb{R}$ endowed with the norm
\[
f\longmapsto\sup_{x\geq0}\left\vert \int_{0}^{x}f\left(  t\right)
\,dt\right\vert \text{.}
\]

\begin{corollary}
\label{co4} The set $\mathcal{C}[0,1]\smallsetminus\mathcal{P}[0,1]$ is $(\alpha
,\mathfrak{c})$-spaceable in $\mathcal{C}[0,1]$ if and only if $\alpha<\aleph_{0}$.
\end{corollary}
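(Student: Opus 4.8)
The plan is to read this off as the single-subspace instance of our criterion, using Corollary \ref{coro1} for the positive part and the general non-spaceability principle of \cite[Corollary 2.3]{fprs} for the negative part. The first step is to set the stage: $V=\mathcal{C}[0,1]$ is a Banach space under the uniform norm $\left\Vert \cdot\right\Vert_{\infty}$, and $W=\mathcal{P}[0,1]$ is a linear subspace of $V$. The crucial structural fact is that $\mathcal{P}[0,1]$ carries the countable Hamel basis $\{1,x,x^{2},\ldots\}$, so that $\dim(\mathcal{P}[0,1])=\aleph_{0}$. I would also record that $\mathcal{P}[0,1]$ is automatically of infinite codimension: since an infinite-dimensional Banach space cannot have a countable Hamel basis (Baire category), $\dim(\mathcal{C}[0,1])=\mathfrak{c}$, and removing an $\aleph_{0}$-dimensional subspace leaves codimension $\mathfrak{c}$. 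This is the only verification the hypotheses of Corollary \ref{coro1} demand.

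For the implication $\alpha<\aleph_{0}\Rightarrow(\alpha,\mathfrak{c})$-spaceable, I would simply invoke Corollary \ref{coro1} with $V=\mathcal{C}[0,1]$ and $W=\mathcal{P}[0,1]$: a countable-dimensional subspace of a Banach space has $(\alpha,\mathfrak{c})$-spaceable complement for every $\alpha<\aleph_{0}$. An equivalent and self-contained route, which even delivers the full biconditional in one stroke, is to write $\mathcal{P}[0,1]=\bigcup_{r=1}^{\infty}\mathcal{P}_{r}[0,1]$, where $\mathcal{P}_{r}[0,1]$ is the space of polynomials of degree at most $r$; each $\mathcal{P}_{r}[0,1]$ is finite-dimensional, hence Banach under $\left\Vert \cdot\right\Vert_{\infty}$ with the trivial domination $\left\Vert \cdot\right\Vert_{\infty}\leq C_{r}\left\Vert \cdot\right\Vert_{\infty}$ (with $C_{r}=1$), and the union is an infinite-codimensional subspace, so Corollary \ref{Corollary 2.7} applies directly.

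The remaining implication, $(\alpha,\mathfrak{c})$-spaceable $\Rightarrow\alpha<\aleph_{0}$, is the genuine obstacle, and I expect it to rest on \cite[Corollary 2.3]{fprs} exactly as in the justification of Corollary \ref{Corollary 2.7}: for every $\alpha\geq\aleph_{0}$ the complement $\mathcal{C}[0,1]\smallsetminus\mathcal{P}[0,1]$ fails to be $(\alpha,\mathfrak{c})$-spaceable, irrespective of the target cardinal. The mechanism underlying this is the existence of a \emph{non-extendable} subspace: one produces an $\aleph_{0}$-dimensional subspace $F$ with $F\smallsetminus\{0\}\subset\mathcal{C}[0,1]\smallsetminus\mathcal{P}[0,1]$ whose closure contains a nonzero polynomial $w\in\mathcal{P}[0,1]$. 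Any closed subspace $F_{\beta}\supseteq F$ must then contain $\overline{F}$, hence $w$, and since $w\notin(\mathcal{C}[0,1]\smallsetminus\mathcal{P}[0,1])\cup\{0\}$ no closed extension of $F$ can lie inside the complement. Combining the two implications yields the asserted equivalence.
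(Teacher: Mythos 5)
Your proposal matches the paper's own justification: Corollary \ref{co4} is obtained by applying Corollary \ref{coro1} to $V=\mathcal{C}[0,1]$ and the $\aleph_{0}$-dimensional subspace $W=\mathcal{P}[0,1]$, with the failure of $(\alpha,\mathfrak{c})$-spaceability for $\alpha\geq\aleph_{0}$ supplied by \cite[Corollary 2.3]{fprs}, exactly as you argue. Your alternative route via the decomposition $\mathcal{P}[0,1]=\bigcup_{r=1}^{\infty}\mathcal{P}_{r}[0,1]$ and Corollary \ref{Corollary 2.7} is the same machinery made explicit rather than a genuinely different argument, and your auxiliary verifications (countable Hamel basis, infinite codimension via $\dim(\mathcal{C}[0,1])=\mathfrak{c}$) are correct.
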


\begin{corollary}
\label{co5}Given $k\in\mathbb{N}$, the set $\mathcal{C}[0,1]\smallsetminus
\mathcal{C}^{k}[0,1]$ is $(\alpha,\mathfrak{c})$-spaceable in $\mathcal{C}
[0,1]$ if and only if $\alpha<\aleph_{0}$.
\end{corollary}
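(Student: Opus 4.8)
The plan is to deduce the statement directly from Corollary \ref{Corollary 2.7}, applied to $V = \mathcal{C}[0,1]$ with the supremum norm $\left\Vert \cdot\right\Vert_{\infty}$ and to the constant family $W_{r} := \mathcal{C}^{k}[0,1]$ for every $r\in\mathbb{N}$, so that $\bigcup_{r=1}^{\infty}W_{r} = \mathcal{C}^{k}[0,1]$ is itself a subspace of $V$. Two of the three hypotheses of Corollary \ref{Corollary 2.7} require only routine checking. For normability, I would endow each $W_{r} = \mathcal{C}^{k}[0,1]$ with $\left\Vert f\right\Vert_{W_{r}} := \sum_{j=0}^{k}\left\Vert f^{(j)}\right\Vert_{\infty}$, under which it is a classical Banach space. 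For the norm domination, I observe that $\left\Vert f\right\Vert_{V} = \left\Vert f\right\Vert_{\infty} \le \left\Vert f\right\Vert_{W_{r}}$, so the required inequality holds with $C_{r} = 1$. The desired biconditional is then precisely the conclusion of Corollary \ref{Corollary 2.7}, once the remaining hypothesis is verified.

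That remaining hypothesis is that $\mathcal{C}^{k}[0,1]$ has infinite codimension in $\mathcal{C}[0,1]$, and this is the main obstacle. Equivalently, I must exhibit an infinite subset of $\mathcal{C}[0,1]$ that is linearly independent modulo $\mathcal{C}^{k}[0,1]$. To this end I would fix a sequence of distinct interior points $(x_{i})_{i=1}^{\infty}\subset(0,1)$ and set
\[
\phi_{i}(x) := \left\vert x - x_{i}\right\vert^{\,k-\frac{1}{2}}.
\]
Each $\phi_{i}$ is continuous on $[0,1]$ and real-analytic away from $x_{i}$, while near $x_{i}$ its $k$-th derivative behaves like $\left\vert x - x_{i}\right\vert^{-1/2}$; hence $\phi_{i}\in\mathcal{C}^{k-1}[0,1]\smallsetminus\mathcal{C}^{k}[0,1]$ (using $k\ge 1$).

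The key point is the localization of the singularities. Given any finite linear combination $\sum_{i=1}^{N}c_{i}\phi_{i}$ with some $c_{j}\neq 0$, every summand with $i\neq j$ is smooth on a neighborhood of $x_{j}$, so on that neighborhood the combination differs from $c_{j}\phi_{j}$ only by a $\mathcal{C}^{\infty}$ term; since $c_{j}\phi_{j}$ fails to be $k$-times continuously differentiable at $x_{j}$, so does the whole combination. Thus no nontrivial finite combination of the $\phi_{i}$ lies in $\mathcal{C}^{k}[0,1]$, which simultaneously establishes their linear independence and the infinite codimension of $\mathcal{C}^{k}[0,1]$ in $\mathcal{C}[0,1]$. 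With all hypotheses of Corollary \ref{Corollary 2.7} now in place, the claimed characterization follows at once.
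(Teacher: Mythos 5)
Your proposal is correct and follows exactly the route the paper intends: Corollary \ref{co5} is stated there as a direct consequence of Corollary \ref{Corollary 2.7}, applied with $V=\mathcal{C}[0,1]$, the constant family $W_{r}=\mathcal{C}^{k}[0,1]$ with its usual norm $\sum_{j=0}^{k}\Vert f^{(j)}\Vert_{\infty}$, and the domination $\Vert\cdot\Vert_{\infty}\leq\Vert\cdot\Vert_{W_{r}}$. Your verification of the infinite codimension of $\mathcal{C}^{k}[0,1]$ via the functions $\phi_{i}(x)=\vert x-x_{i}\vert^{k-\frac{1}{2}}$, whose singularities localize so that no nontrivial finite combination is $k$-times continuously differentiable, is sound and supplies the one detail the paper leaves implicit.
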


\begin{corollary}
\label{co6}Given $p\in\lbrack1,\infty]$, $L_{p}[0,1]\smallsetminus
\mathcal{P}[0,1]$ is $(\alpha,\mathfrak{c})$-spaceable in $L_{p}[0,1]$ if and only if
$\alpha<\aleph_{0}$.
\end{corollary}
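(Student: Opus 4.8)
The plan is to read Corollary \ref{co6} off Corollary \ref{coro1} for the implication $\alpha<\aleph_0 \Rightarrow$ spaceable, and to invoke the same non-spaceability tool used for Corollary \ref{Corollary 2.7} for the reverse implication. So first I would set $V=L_p[0,1]$ and $W=\mathcal{P}[0,1]$ and check the hypotheses of Corollary \ref{coro1}. For each $p\in[1,\infty]$ the space $L_p[0,1]$ is Banach, and every polynomial, being continuous and bounded on $[0,1]$, lies in $L_p[0,1]$; thus $\mathcal{P}[0,1]$ is a genuine subspace of $V$. The one point to record is that $\dim\mathcal{P}[0,1]=\aleph_0$: the monomials $1,x,x^2,\ldots$ span $\mathcal{P}[0,1]$ and are linearly independent as elements of $L_p[0,1]$, since a finite combination vanishing almost everywhere is a polynomial vanishing on a set of full measure, hence identically zero by continuity. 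This is the only place where the case $p=\infty$ needs a separate thought, and it causes no trouble.

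With the hypotheses in hand, Corollary \ref{coro1} applies directly and gives that $L_p[0,1]\smallsetminus\mathcal{P}[0,1]$ is $(\alpha,\mathfrak{c})$-spaceable for every $\alpha<\aleph_0$, which is the ``if'' part of the equivalence. No construction beyond the bookkeeping above is required, since the heavy lifting is already packaged inside Corollary \ref{coro1}.

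For the ``only if'' part I would show that $\alpha\geq\aleph_0$ is incompatible with $(\alpha,\mathfrak{c})$-spaceability. Since $\dim L_p[0,1]=\mathfrak{c}$ while $\dim\mathcal{P}[0,1]=\aleph_0$, the subspace $\mathcal{P}[0,1]$ has infinite codimension, so the configuration fits the framework of \cite[Corollary 2.3]{fprs}, exactly as in the closing step of Corollary \ref{Corollary 2.7}; that result forces non-$(\alpha,\mathfrak{c})$-spaceability for all $\alpha\geq\aleph_0$, irrespective of the second cardinal. Combining the two directions yields the asserted biconditional. I do not expect a real obstacle here, as the statement is essentially a transcription of Corollary \ref{coro1}; the only mild care needed is the independence of the monomials as $L_\infty$-classes, which is settled above.
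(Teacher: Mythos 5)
Your proposal is correct and takes essentially the same route as the paper, which derives Corollary \ref{co6} from Corollary \ref{coro1} applied to $V=L_p[0,1]$ and $W=\mathcal{P}[0,1]$ with $\dim \mathcal{P}[0,1]=\aleph_0$, the converse for $\alpha\geq\aleph_0$ following from \cite[Corollary 2.3]{fprs} exactly as in the discussion preceding Corollary \ref{Corollary 2.7}. Your explicit check that the monomials remain linearly independent as $L_p$-classes (including $p=\infty$) is the routine verification the paper leaves implicit, and it is carried out correctly.
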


\begin{corollary}
\label{co7}Given $k\in\mathbb{N}\cup\{0\}$ and $p\in\lbrack1,\infty]$, the set
$L_{p}[0,1]\smallsetminus\mathcal{C}^{k}[0,1]$ is $(\alpha,\mathfrak{c}
)$-spaceable in $L_{p}[0,1]$ if and only if $\alpha<\aleph_{0}$.
\end{corollary}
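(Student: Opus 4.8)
The plan is to apply Corollary \ref{Corollary 2.7} with $V=L_{p}[0,1]$ and with the constant family $W_{r}:=\mathcal{C}^{k}[0,1]$ for every $r\in\mathbb{N}$, so that $\bigcup_{r=1}^{\infty}W_{r}=\mathcal{C}^{k}[0,1]$ reduces to a single subspace. To invoke that corollary I must verify its three structural hypotheses: that $V$ is a Banach space, that $\mathcal{C}^{k}[0,1]$ is an infinite codimensional subspace of $L_{p}[0,1]$, and that $\mathcal{C}^{k}[0,1]$ carries a Banach norm dominating $\Vert\cdot\Vert_{L_{p}}$. The first is standard since $p\in[1,\infty]$. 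For the third, I would endow $\mathcal{C}^{k}[0,1]$ with the norm $\Vert f\Vert_{\mathcal{C}^{k}}=\sum_{j=0}^{k}\Vert f^{(j)}\Vert_{\infty}$, under which it is classically complete, and observe that for every $f\in\mathcal{C}^{k}[0,1]$ one has $\Vert f\Vert_{L_{p}}\leq\Vert f\Vert_{\infty}\leq\Vert f\Vert_{\mathcal{C}^{k}}$ (the first inequality being the essential-sup bound when $p=\infty$), so the domination holds with $C_{r}=1$.

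The heart of the argument, and the only nontrivial step, is showing that $\mathcal{C}^{k}[0,1]$ has infinite codimension in $L_{p}[0,1]$. I would establish this by exhibiting an infinite set whose image in the quotient $L_{p}[0,1]/\mathcal{C}^{k}[0,1]$ is linearly independent. A convenient choice is the family of characteristic functions $h_{n}=\mathcal{X}_{I_{n}}$ of a sequence of pairwise disjoint, mutually separated subintervals $I_{n}\subset[0,1]$ (chosen with genuine gaps between consecutive intervals). Any nontrivial finite linear combination $\sum_{n}c_{n}h_{n}$ is a step function that equals $0$ almost everywhere on each separating gap, and on each $I_{n}$ with $c_{n}\neq0$ it is forced to take the value $c_{n}$; a continuous representative would therefore have to vanish on a gap yet approach a nonzero constant from the adjacent interval, which is impossible. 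Hence the $L_{p}$-class of $\sum_{n}c_{n}h_{n}$ contains no continuous representative and in particular cannot lie in $\mathcal{C}^{k}[0,1]$. Thus the classes $[h_{n}]$ are linearly independent in the quotient, which is consequently infinite dimensional.

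With all hypotheses of Corollary \ref{Corollary 2.7} in place, that corollary delivers precisely the stated equivalence: $L_{p}[0,1]\smallsetminus\mathcal{C}^{k}[0,1]$ is $(\alpha,\mathfrak{c})$-spaceable if and only if $\alpha<\aleph_{0}$. I expect the main obstacle to be the careful justification that a nonzero step function is not almost everywhere equal to any $\mathcal{C}^{k}$ function; this is where one must reason at the level of $L_{p}$ equivalence classes rather than pointwise, and it is exactly to make this step clean that the intervals $I_{n}$ are taken to be separated rather than merely disjoint.
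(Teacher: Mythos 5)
Your proposal is correct and follows essentially the same route as the paper, which obtains Corollary \ref{co7} precisely by applying Corollary \ref{Corollary 2.7} with $V=L_{p}[0,1]$ and $W_{r}=\mathcal{C}^{k}[0,1]$ under the norm $\sum_{j=0}^{k}\Vert f^{(j)}\Vert_{\infty}$. Your explicit step-function argument for the infinite codimension of $\mathcal{C}^{k}[0,1]$ in $L_{p}[0,1]$ (using separated intervals so that no nontrivial combination of the $\mathcal{X}_{I_{n}}$ admits a continuous representative) is sound and fills in a detail the paper leaves implicit.
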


Let
$(\Omega,\mathcal{M},\mu)$ be a measure space. In \cite{bc2} the authors obtained general results about the spaceability of sets of the type $L_{p}(\Omega)\smallsetminus\bigcup_{q\in\Gamma}L_{q}\left(\Omega\right)$, where $\Gamma$ is a certain subset of $[1,\infty]$ that does not contain $p$.

\begin{theorem}[Bernal-Gonz\'{a}lez, Ord\'{o}\~{n}ez Cabrera \cite{bc2}]\label{t31}
Let $(\Omega,\mathcal{M},\mu)$ be a measure space, and consider the conditions
\begin{itemize}
\item[($\alpha$)] $\inf\{\mu(A);\ A\in\mathcal{M},\ \mu(A)>0\}=0$;
\item[($\beta$)] $\sup\{\mu(A);\ A\in\mathcal{M},\ \mu(A)<\infty\}=\infty$.
\end{itemize}
Then the following assertions hold:
\begin{itemize}
\item[(1)] if $1\leq p<\infty$, then $L_{p}(\Omega)\smallsetminus\bigcup
_{q\in(p,\infty]}L_{q}(\Omega)$ is spaceable if and only if $\left(
\alpha\right)  $ holds;
\item[(2)] if $1<p\leq\infty$, then $L_{p}(\Omega)\smallsetminus\bigcup
_{q\in\lbrack1,p)}L_{q}(\Omega)$ is spaceable if and only if $\left(
\beta\right)  $ holds;
\item[(3)] if $1<p<\infty$, then $L_{p}(\Omega)\smallsetminus\bigcup
_{q\in\lbrack1,\infty]\smallsetminus\{p\}}L_{q}(\Omega)$ is spaceable if and
only if both $\left(  \alpha\right)  $ and $\left(  \beta\right)  $ holds.
\end{itemize}
\end{theorem}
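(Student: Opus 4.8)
The plan is to prove each of the three equivalences by splitting it into \emph{necessity} (spaceability $\Rightarrow$ condition) and \emph{sufficiency} (condition $\Rightarrow$ spaceability). For necessity I would argue contrapositively via two elementary inclusion facts. If $(\alpha)$ fails, then $\delta:=\inf\{\mu(A):\mu(A)>0\}>0$, and a Chebyshev estimate ($\mu(\{|f|>t\})\le\|f\|_p^p/t^p<\delta$ for large $t$, hence $=0$) shows every $f\in L_p(\Omega)$ with $p<\infty$ is essentially bounded; therefore $L_p(\Omega)\subseteq L_q(\Omega)$ for all $q\ge p$, and the complements in (1) and (3) collapse to $\{0\}$. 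Dually, if $(\beta)$ fails, then $M:=\sup\{\mu(A):\mu(A)<\infty\}<\infty$ forces $\mu(\operatorname{supp}f)\le M$ whenever $f\in L_p(\Omega)$ with $p<\infty$, and H\"older gives $L_p(\Omega)\subseteq L_q(\Omega)$ for $q<p$, so the complements in (2) and (3) also collapse to $\{0\}$. A set reduced to $\{0\}$ is not spaceable, which settles necessity except at the endpoint $p=\infty$ of (2), where $L_\infty$-functions may have infinite-measure support and a separate argument (classifying the obstructing ``infinite atoms'') is needed.

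For sufficiency my main tool is a direct construction of a closed subspace. The aim is to produce pairwise-disjointly supported, normalized functions $h_n\in L_p(\Omega)\smallsetminus\bigcup_{q\in\Gamma}L_q(\Omega)$ and to define $T((a_n))=\sum_n a_n h_n$, mapping $\ell_p$ (or $c_0$ when $p=\infty$) into $L_p(\Omega)$. Disjointness of the normalized supports makes $T$ an isometry, so its range is closed and infinite-dimensional; and if $(a_n)\ne0$ with $a_{n_0}\ne0$, then restricting $T((a_n))$ to $\operatorname{supp}(h_{n_0})$ returns $a_{n_0}h_{n_0}\notin\bigcup_{q\in\Gamma}L_q$, so the entire range minus $\{0\}$ lands in the complement. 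Thus the whole problem reduces to building infinitely many disjointly supported $h_n$, each lying in $L_p(\Omega)\smallsetminus\bigcup_{q\in\Gamma}L_q(\Omega)$. For the one-sided finite-$p$ cases --- all of (1) and (2) with $1<p<\infty$ --- I would instead take the shorter route through Theorem \ref{t29}: partition $\Omega=\bigsqcup_i\Omega_i$ so that each $\Omega_i$ still satisfies the relevant condition, note that $\Gamma\subset(p,\infty]$ or $\Gamma\subset(0,p)$ makes the union a subspace by Proposition \ref{p2}, and deduce the required lineability on each piece; Theorem \ref{t29} then yields even $(\alpha,\mathfrak{c})$-spaceability.

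To manufacture a single bad $h$ I would exploit the conditions as follows. Using $(\alpha)$ I take disjoint sets $A_k$ with $\mu(A_k)\to0$ and form $u=\sum_k c_k\mathcal{X}_{A_k}$ with $c_k\uparrow\infty$ tuned so that $\sum_k c_k^p\mu(A_k)<\infty$ but $\sum_k c_k^q\mu(A_k)=\infty$ for every $q>p$; this places $u$ in $L_p\smallsetminus\bigcup_{q>p}L_q$. Using $(\beta)$ I take disjoint sets $B_k$ with $\mu(B_k)\to\infty$ (finite) and form $v=\sum_k d_k\mathcal{X}_{B_k}$ with $d_k\downarrow0$ so that $\sum_k d_k^p\mu(B_k)<\infty$ but $\sum_k d_k^q\mu(B_k)=\infty$ for every $q<p$, placing $v$ in $L_p\smallsetminus\bigcup_{q<p}L_q$. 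For (1) I use $u$, for (2) with $p<\infty$ I use $v$, and for the two-sided case (3) I set $h=u+v$ with $\operatorname{supp}(u)\cap\operatorname{supp}(v)=\emptyset$, so that $\int|h|^q=\int|u|^q+\int|v|^q$ is infinite for every $q\ne p$ at once; the endpoint $p=\infty$ of (2) is handled by $h=\mathcal{X}_{\bigcup_kB_k}$ with $\sum_k\mu(B_k)=\infty$.

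The main obstacle is the purely measure-theoretic extraction underpinning every sufficiency argument: from $(\alpha)$ [resp. $(\beta)$] alone, in an arbitrary measure space --- possibly non-$\sigma$-finite and mixing atomic and non-atomic parts --- I must produce not just one but infinitely many pairwise disjoint sets of arbitrarily small positive [resp. large finite] measure, and then organize them into countably many infinite blocks so that infinitely many independent $h_n$ can coexist on disjoint supports. I would isolate this as a structural lemma, proved by decomposing $\Omega$ into its atomic and non-atomic parts and disjointifying greedily: a non-atomic region of positive (resp. infinite) measure subdivides into sets of arbitrarily small positive (resp. arbitrarily large finite) measure, while an atomic part directly furnishes atoms of the required size. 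The remaining delicate points are the simultaneous exclusion of all $q\ne p$ in (3), which the disjoint-support device resolves cleanly, and the $p=\infty$ endpoint of (2), where $\ell_p$ must be replaced by $c_0$ and the small-set mechanism is unavailable.
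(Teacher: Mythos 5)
You should first note a structural point: the paper contains no proof of Theorem \ref{t31} to compare against --- it is imported verbatim from \cite{bc2} and used as a black box (e.g.\ in Corollary \ref{c38}, through its non-locally-convex companion Theorem \ref{t9}). Measured against the standard argument of \cite{bc2} and against the technique this paper itself uses for Theorem \ref{t29}, your sufficiency scheme is exactly the classical one: disjointly supported normalized witnesses $h_{n}$, the isometry $T\left(  (a_{n})\right)  =\sum_{n}a_{n}h_{n}$ from $\ell_{p}$ (or $c_{0}$ when $p=\infty$) onto a closed subspace, and the restriction trick expelling every nonzero element of the range from $\bigcup_{q\in\Gamma}L_{q}$. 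Your necessity collapses for finite $p$ are also correct (Chebyshev plus Proposition \ref{p1} when $(\alpha)$ fails; support of measure at most $M$ by continuity from below, plus H\"{o}lder, when $(\beta)$ fails), with the trivial remark that the complements collapse to $\emptyset$ rather than $\{0\}$. Moreover, the ``main obstacle'' you isolate is lighter than you fear: no atomic/non-atomic decomposition is needed. From $(\alpha)$ choose $A_{k}$ with $\mu(A_{k+1})<\mu(A_{k})/4$ and set $B_{k}=A_{k}\smallsetminus\bigcup_{j>k}A_{j}$; these are pairwise disjoint with $0<\tfrac{2}{3}\mu(A_{k})\leq\mu(B_{k})\rightarrow0$, and dually for $(\beta)$; then distribute the $B_{k}$ into infinitely many infinite blocks. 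Routing (1) and the finite-$p$ part of (2) through Theorem \ref{t29} is legitimate within this paper (it even overshoots, giving $(\alpha,\mathfrak{c})$-spaceability), and you are right to keep (3) out of it, since a two-sided $\Gamma$ destroys the span hypothesis of Theorem \ref{t29} (see the Remark following it).

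The genuine gap is the one you flagged and then left open: necessity at the endpoint $p=\infty$ of (2). There the contrapositive collapse is unavailable in principle, not merely technically: if $(\beta)$ fails, the set $L_{\infty}(\Omega)\smallsetminus\bigcup_{q\in\lbrack1,\infty)}L_{q}(\Omega)$ can be nonempty (take $\mathcal{X}_{\Omega}$ when $\mu(\Omega)=\infty$ while every finite-measure set has measure at most $M$), so one must genuinely exclude closed infinite-dimensional subspaces rather than show the set is void. Worse, for a completely arbitrary measure space this direction is delicate: on $\Omega=\mathbb{N}$ with $\mathcal{M}=\mathcal{P}(\mathbb{N})$ and $\mu(A)=\infty$ for every $A\neq\emptyset$, one has $L_{q}(\Omega)=\{0\}$ for all finite $q$ and $L_{\infty}(\Omega)=\ell_{\infty}$, so the complement is $\ell_{\infty}\smallsetminus\{0\}$, which \emph{is} spaceable although $(\beta)$ fails. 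This shows that the $p=\infty$ ``only if'' cannot follow from any soft argument; it depends on the structural treatment of purely infinite pieces (``infinite atoms'') and the implicit conventions on the measure space under which \cite{bc2} operate. Since your proposal supplies neither that analysis nor hypotheses neutralizing such degenerate examples, it is incomplete precisely at the single non-routine point of the theorem; the rest of your outline is sound.
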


As a consequence of Corollary \ref{c5}, we can say that the sets in $(1)$ and
$(2)$ of Theorem \ref{t31} are $(\alpha,\mathfrak{c})$-spaceable for every
$\alpha<\aleph_{0}$.

In \cite{Bernal} the authors mention that the arguments in \cite{bc2} can be
mimicked to provide the non locally convex version of Theorem \ref{t31}.

\begin{theorem}[Bernal-Gonz\'{a}lez, Ord\'{o}\~{n}ez Cabrera \cite{Bernal}]\label{t9} 
Let $(\Omega,\mathcal{M},\mu)$ be a measure space, and let conditions ($\alpha$) and ($\beta$) be the same as in Theorem \ref{t31}. If $p\in(0,\infty)$, then the following statements hold:
\begin{itemize}
\item[(1')] $L_{p}(\Omega)\smallsetminus%
{\textstyle\bigcup_{q\in(p,\infty]}}
L_{q}(\Omega)$ is spaceable if and only if $\left(  \alpha\right)  $ holds;
\item[(2')] $L_{p}(\Omega)\smallsetminus%
{\textstyle\bigcup_{q\in(0,p)}}
L_{q}(\Omega)$ is spaceable if and only if $\left(  \beta\right)  $ holds;
\item[(3')] $L_{p}(\Omega)\smallsetminus%
{\textstyle\bigcup_{q\in(0,\infty]\smallsetminus\{p\}}}
L_{q}(\Omega)$ is spaceable if and only if both $\left(  \alpha\right)  $ and
$\left(  \beta\right)  $ holds.
\end{itemize}
\end{theorem}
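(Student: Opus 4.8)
The plan is to treat each of the three equivalences by splitting it into necessity (the stated condition is forced by spaceability) and sufficiency (the condition produces a closed $\mathfrak{c}$-dimensional subspace sitting, except for the origin, inside the complement). For the sufficiency the unifying engine will be the disjointly supported block construction together with the ``embedding with closed range'' argument already carried out in the proof of Theorem \ref{t29} (which itself mimics \cite{ABRR,bc2}); the three items will differ only in how the blocks are manufactured.

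For necessity I would argue the contrapositive, showing that if the relevant condition fails then the complement is \emph{empty} and hence not spaceable. If $(\alpha)$ fails there is $\delta>0$ with $\mu(A)\ge\delta$ for every positive-measure $A$; then each $f\in L_p(\Omega)$ is essentially bounded, since $\{|f|>N\}$ has measure $\le\|f\|_p^p/N^p<\delta$, hence is null, for large $N$, and consequently $f\in L_r(\Omega)$ for every $r>p$. Thus $L_p(\Omega)\subseteq\bigcup_{q\in(p,\infty]}L_q(\Omega)$, emptying the complement in (1'). Symmetrically, if $(\beta)$ fails then every $f\in L_p(\Omega)$ is supported on a set of measure $\le M<\infty$ (each $\{|f|>1/n\}$ has finite, hence $\le M$, measure), and on a finite-measure set $L_p\subseteq L_q$ for $q<p$, killing (2'); for (3') the failure of \emph{either} condition already empties the complement.

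For sufficiency I would extract from the hypothesis a disjoint family of measurable sets and build disjointly supported $f_k\in L_p(\Omega)$ lying in the complement. Under $(\alpha)$ one obtains (splitting $\Omega$ into its atomic and atomless parts) a disjoint sequence $A_n$ with $\mu(A_n)\downarrow0$ and step functions $f=\sum_n c_n\mathcal{X}_{A_n}$ with $\sum_n c_n^p\mu(A_n)<\infty$ but $\int|f|^q=\infty$ for every $q>p$; these give lineability of $L_p(\Omega_i)\smallsetminus\bigcup_{q>p}L_q(\Omega_i)$ on each block. Partitioning $\Omega$ into countably many measurable blocks, each containing infinitely many of the $A_n$, one applies Theorem \ref{t29} with $\Gamma=(p,\infty]$ (since $\Gamma\subset[p,\infty]$, Proposition \ref{p2} identifies the span in its hypothesis with $\bigcup_{q>p}(L_q\cap L_p)$) to get $(\alpha,\mathfrak{c})$-spaceability, in particular spaceability. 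Item (2') is dual: $(\beta)$ yields disjoint $T_n$ with $\mu(T_n)\uparrow\infty$ finite and functions spread thinly over them, lying in $L_p\smallsetminus\bigcup_{q<p}L_q$, and Theorem \ref{t29} applies with $\Gamma=(0,p)\subset(0,p]$.

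The hard part will be item (3'), together with the construction of the escaping functions themselves. Because $\Gamma=(0,\infty]\smallsetminus\{p\}$ is two-sided, the Remark following Theorem \ref{t29} shows that its lineability hypothesis fails (one has $L_p(\Omega_i)=\operatorname{span}\{\bigcup_{q}L_q(\Omega_i)\cap L_p(\Omega_i)\}$), so Theorem \ref{t29} cannot be invoked and I would re-run its proof by hand: using both $(\alpha)$ and $(\beta)$, each $f_k$ is built as a sum of a small-set part (large values on sets of tiny measure, escaping every $L_q$ with $q>p$) and a disjoint large-set part (small values spread over sets of huge measure, escaping every $L_q$ with $q<p$), so that $f_k\in L_p(\Omega)\smallsetminus\bigcup_{q\ne p}L_q(\Omega)$; the map $T((a_i))=\sum_i a_i f_i$ on $\ell_{\tilde{p}}$ (with $\tilde p$ as in the proof of Theorem \ref{t29}) is then a continuous embedding, continuity following from the closed graph theorem for $F$-spaces, whose closed range meets $\bigcup_{q\ne p}L_q$ only at $0$, since disjointness of supports forces any nonzero limit to reproduce some $f_k$-component on $\operatorname{supp}(f_k)$. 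The genuinely delicate point throughout is producing step functions whose $L_q$-norm blows up for \emph{every} $q$ on the relevant side: the regime $q\downarrow p$ (resp.\ $q\uparrow p$) is not reached by a single geometric rate and needs a logarithmically corrected profile for the pair $(c_n,\mu(A_n))$. This, and the measure-theoretic extraction of the disjoint families from $(\alpha)$ and $(\beta)$, are where the real work lies.
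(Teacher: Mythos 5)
First, a structural point: the paper does not actually prove Theorem \ref{t9} --- it is imported from \cite{Bernal}, where the authors remark that the arguments of \cite{bc2} can be mimicked --- so your proposal can only be measured against that literature proof and the paper's surrounding machinery. Your necessity halves are correct and complete: if $(\alpha)$ fails, Chebyshev forces every $f\in L_{p}(\Omega)$ into $L_{\infty}(\Omega)$ and hence (Proposition \ref{p1}) into every $L_{q}$, $q>p$; if $(\beta)$ fails, every $f\in L_{p}(\Omega)$ has support of finite measure and H\"older puts it in every $L_{q}$, $q<p$; in all three items the complement is empty, which is exactly how necessity goes in \cite{bc2}. Your sufficiency route for (1') and (2') is genuinely different from the original: instead of building a closed copy of $\ell_{p}$ by hand as \cite{bc2} does, you feed hand-built, disjointly supported escaping functions into the paper's Theorem \ref{t29} (with $\Gamma=(p,\infty]$, resp. $\Gamma=(0,p)$, the span in its hypothesis collapsing to the union by Proposition \ref{p2}). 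This is not circular --- the proof of Theorem \ref{t29} nowhere uses Theorem \ref{t9} --- and it buys strictly more than spaceability, namely $(\alpha,\mathfrak{c})$-spaceability for every finite $\alpha$, in effect anticipating what the paper only extracts later in Corollaries \ref{c38} and \ref{c39}. Your handling of (3') is the right call: the Remark after Theorem \ref{t29} does block its application for two-sided $\Gamma$, and the direct construction you sketch (each $f_{k}$ a sum of a small-measure part with unbounded heights killing all $q\in(p,\infty]$ and a disjoint large-measure part killing all $q\in(0,p)$, then $T\colon\ell_{\tilde{p}}\rightarrow L_{p}(\Omega)$ with the disjoint-support closure argument) is precisely the \cite{bc2}/\cite{BFPS} method, valid in the $F$-space setting. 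Two minor quibbles: no atomic/atomless splitting is needed, since $(\alpha)$, $(\beta)$ and the standard disjointification $A_{n}=B_{n}\smallsetminus\bigcup_{k>n}B_{k}$ with $\sum_{k>n}\mu(B_{k})<\tfrac{1}{2}\mu(B_{n})$ already produce the disjoint families; and the profile issue you flag is easier than you suggest: taking $c_{n}=(n^{2}\mu(A_{n}))^{-1/p}$ along a subsequence with $\mu(A_{n})\leq n^{-n}$ gives $\sum_{n}c_{n}^{p}\mu(A_{n})<\infty$ while $c_{n}^{q}\mu(A_{n})=n^{-2q/p}\mu(A_{n})^{1-q/p}\rightarrow\infty$ simultaneously for all $q>p$ (dually for $q<p$), so a rapidly decaying subsequence replaces any delicate logarithmic tuning.
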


Notice that if $\Omega$ is some of the sets $(0,\infty)$, $\mathbb{R}^{m}$,
$m\in\mathbb{N}$, $\mathcal{L}$ is the Lebesgue $\sigma$-algebra and $\lambda$
is the Lebesgue measure, then the measure space $\left(  \Omega,\mathcal{L}
,\lambda\right)  $ satisfies the conditions $(\alpha)$ and $(\beta)$ of
Theorems \ref{t31} and \ref{t9}.

\begin{corollary}
Given $p\in(0,\infty)$ and $m\in\mathbb{N}$, the set
\[
L_{p}(\mathbb{R}^{m})\smallsetminus\bigcup_{q\in(0,p)}L_{q}(\mathbb{R}^{m})
\]
is $(\alpha,\mathfrak{c})$-spaceable in $L_{p}(\mathbb{R}^{m})$ if and only if
$\alpha<\aleph_{0}$.
\end{corollary}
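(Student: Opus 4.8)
The plan is to apply Theorem~\ref{t29} with $\Omega=\mathbb{R}^{m}$ (equipped with Lebesgue measure $\lambda$) and $\Gamma=(0,p)$, which is admissible since $\emptyset\neq(0,p)\subset(0,\infty]\smallsetminus\{p\}$ and, moreover, $(0,p)\subset(0,p]$. The only hypothesis of Theorem~\ref{t29} that genuinely requires work is the construction of a pairwise disjoint measurable decomposition $\mathbb{R}^{m}=\bigcup_{i\in\mathbb{N}}\Omega_{i}$ for which the per-piece lineability holds; once this is in place, the full statement, including the ``only if'' direction, follows verbatim from the conclusion of Theorem~\ref{t29}.

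First I would build the decomposition so that each block carries infinite measure. Writing $\mathbb{R}=\bigcup_{k\in\mathbb{Z}}[k,k+1)$ and partitioning $\mathbb{Z}$ into countably many pairwise disjoint infinite sets $\{S_{i}\}_{i\in\mathbb{N}}$, set $B_{i}:=\bigcup_{k\in S_{i}}[k,k+1)$, so that the $B_{i}$ are pairwise disjoint, $\bigcup_{i}B_{i}=\mathbb{R}$, and $\lambda(B_{i})=\infty$ for every $i$. Then I would put $\Omega_{i}:=B_{i}\times\mathbb{R}^{m-1}$ (interpreting $\Omega_{1}=B_{1}$ when $m=1$), which yields a pairwise disjoint measurable partition $\mathbb{R}^{m}=\bigcup_{i\in\mathbb{N}}\Omega_{i}$ with $\lambda(\Omega_{i})=\infty$ for all $i$.

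Next I would verify the per-piece lineability hypothesis. Since $\Gamma=(0,p)\subset(0,p]$, Proposition~\ref{p2} shows that $\bigcup_{q\in(0,p)}\big(L_{q}(\Omega_{i})\cap L_{p}(\Omega_{i})\big)$ is already a subspace, hence coincides with its span; consequently
\[
L_{p}(\Omega_{i})\smallsetminus\operatorname{span}\Big\{\textstyle\bigcup_{q\in(0,p)}L_{q}(\Omega_{i})\cap L_{p}(\Omega_{i})\Big\}=L_{p}(\Omega_{i})\smallsetminus\bigcup_{q\in(0,p)}L_{q}(\Omega_{i}).
\]
Because each $\Omega_{i}$ has infinite Lebesgue measure, the restricted measure space satisfies condition~$(\beta)$ (it contains measurable subsets of arbitrarily large finite measure), so Theorem~\ref{t9}$(2')$ guarantees that $L_{p}(\Omega_{i})\smallsetminus\bigcup_{q\in(0,p)}L_{q}(\Omega_{i})$ is spaceable, and \emph{a fortiori} lineable, for each $i\in\mathbb{N}$. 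With every hypothesis of Theorem~\ref{t29} now satisfied, that theorem delivers at once that $L_{p}(\mathbb{R}^{m})\smallsetminus\bigcup_{q\in(0,p)}L_{q}(\mathbb{R}^{m})$ is $(\alpha,\mathfrak{c})$-spaceable if and only if $\alpha<\aleph_{0}$.

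The main obstacle, and the point deserving the most care, is the requirement that each block $\Omega_{i}$ have infinite measure: on a finite-measure piece the inclusion $L_{p}\subset L_{q}$ for $q<p$ would make the relevant complement empty, so the per-piece hypothesis of Theorem~\ref{t29} would fail outright. This is precisely why the decomposition must split $\mathbb{R}^{m}$ into infinitely many infinite-measure parts rather than into bounded cells, and it is what links the applicability of Theorem~\ref{t29} here to the condition~$(\beta)$ that $\mathbb{R}^{m}$ enjoys.
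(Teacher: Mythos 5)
Your proposal is correct and takes essentially the same route as the paper's proof: decompose $\mathbb{R}^{m}$ into countably many pairwise disjoint measurable pieces of infinite Lebesgue measure, use Proposition~\ref{p2} to identify $\operatorname{span}\bigl\{\bigcup_{q\in(0,p)}L_{q}(\Omega_{i})\cap L_{p}(\Omega_{i})\bigr\}$ with the union itself, obtain per-piece lineability from condition~$(\beta)$ via Theorem~\ref{t9}, and conclude (in both directions) from Theorem~\ref{t29}. The only difference is cosmetic: you construct the infinite-measure decomposition explicitly, whereas the paper simply asserts that such a decomposition exists.
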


\begin{proof}
It is sufficient to consider a decomposition $\mathbb{R}^{m}=%
{\textstyle\bigcup_{i\in\mathbb{N}}}
X_{i}$ of pairwise disjoint sets such that $\lambda(X_{i})=\infty$ for every
$i\in\mathbb{N}$, since $(X_{i},\mathcal{L}_{X_{i}},\lambda_{X_{i}})$
satisfies the condition $\left(  \beta\right)  $ of Theorems \ref{t31} and \ref{t9}. Since by
the Proposition \ref{p2} we have $\operatorname{span}\left\{
{\textstyle\bigcup_{q\in(0,p)}}
L_{q}(X_{i})\cap L_{p}(X_{i})\right\}  =%
{\textstyle\bigcup_{q\in(0,p)}}
L_{q}(X_{i})\cap L_{p}(X_{i})$, the result follows from Theorem \ref{t29}.
\end{proof}

The following lemma gives us sufficient conditions for a measure space to
satisfy condition $\left(  \alpha\right)  $ of Theorems \ref{t31} and \ref{t9}. These
conditions will be crucial to show the main results of this section (see
Corollaries \ref{c38} e \ref{c39}).

\begin{lemma}
\label{bl32} Assume that $X$ is a separable metric space and let
$(X,\mathcal{M},\mu)$ be a measure space such that $\mathcal{M}$ contains the
$\sigma$-algebra of Borel. For each $x\in X$ and $r\in(0,\infty)$, let
$B(x,r)$ be the open ball of center $x$ and radius $r$. If there exists a
function $f\colon(0,\infty)\rightarrow(0,\infty)$ with $\lim
\limits_{x\rightarrow0}f(x)=0$ and $\mu(B(x,r))\leq f(r)$ for all $x\in X$ and
$r\in(0,\infty)$, then $(X,\mathcal{M},\mu)$ satisfies the condition ($\alpha
$) of Theorems \ref{t31} and \ref{t9}.
\end{lemma}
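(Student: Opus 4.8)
The plan is to verify condition $(\alpha)$ directly, i.e.\ to show that $\inf\{\mu(A) : A\in\mathcal{M},\ \mu(A)>0\}=0$, by producing, for each prescribed $\varepsilon>0$, a \emph{measurable} set of positive measure not exceeding $\varepsilon$. The only structural input needed is separability: fixing a countable dense subset $D=\{x_{n}\}_{n\in\mathbb{N}}$ of $X$, the density of $D$ guarantees that for every radius $r>0$ one has the countable covering
\[
X=\bigcup_{n\in\mathbb{N}}B(x_{n},r),
\]
since every $y\in X$ lies within distance $r$ of some $x_{n}$. Because $\mathcal{M}$ contains the Borel $\sigma$-algebra, each open ball $B(x_{n},r)$ belongs to $\mathcal{M}$, so this is indeed a covering by measurable sets.

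Next I would exploit the hypothesis on $f$. The limit there should be read as $r\to 0^{+}$ (the variable is the ball radius), so given $\varepsilon>0$ I can select $r>0$ with $f(r)<\varepsilon$. The bound $\mu(B(x,r))\leq f(r)$ then forces every member of the covering to satisfy $\mu(B(x_{n},r))\leq f(r)<\varepsilon$. Assuming the measure is nontrivial, so that condition $(\alpha)$ is not vacuous, we have $\mu(X)>0$, and countable subadditivity yields
\[
0<\mu(X)\leq\sum_{n\in\mathbb{N}}\mu(B(x_{n},r)).
\]
Hence at least one ball, say $B(x_{n_{0}},r)$, has $\mu(B(x_{n_{0}},r))>0$. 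This ball is then a measurable set with $0<\mu(B(x_{n_{0}},r))\leq f(r)<\varepsilon$, exactly the witness required. Since $\varepsilon>0$ was arbitrary and $\mu$ is nonnegative, the infimum in $(\alpha)$ equals $0$.

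The argument is short, so the points needing attention are bookkeeping rather than a genuine obstacle. First, I must ensure the balls are measurable, which is precisely why the hypothesis requires $\mathcal{M}$ to contain the Borel sets. Second, I should record the degenerate case $\mu\equiv 0$: then no set of positive measure exists and $(\alpha)$ is understood vacuously (equivalently, one restricts, as throughout the applications, to nontrivial measures with $\mu(X)>0$). Finally, I would emphasize that no finiteness or integrability of $\mu$ is used anywhere; the whole force of the statement is that the uniform smallness of small-radius balls, combined with separability, prevents the measure from admitting a positive lower bound on its positive-measure sets, which is exactly condition $(\alpha)$.
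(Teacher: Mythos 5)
Your proof is correct and follows essentially the same route as the paper's: fix a countable dense set, choose $r$ with $f(r)<\varepsilon$, cover $X$ by the balls $B(x_n,r)$, and use countable subadditivity to extract a ball of positive measure at most $f(r)<\varepsilon$. Your explicit remarks on ball measurability and the degenerate case $\mu\equiv 0$ (which the paper silently sidesteps by writing $0<\mu(X)$) are sound bookkeeping but do not change the argument.
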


\begin{proof}
Let $\{x_{k}\}_{k\in\mathbb{N}}$ be a dense set in $X$. Given $\varepsilon>0$,
let $r>0$ such that $f(r)<\varepsilon$. Thus, since
\[
X=\bigcup_{k\in\mathbb{N}}\left(  X\cap B(x_{k},r)\right)
\]
and
\[
0<\mu(X)\leq\sum_{k=1}^{\infty}\mu(X\cap B(x_{k},r))\text{,}%
\]
there exists $k\in\mathbb{N}$ such that $\mu(X\cap B(x_{k},r))>0$. This
implies that $\mu(X\cap B(x_{k},r))\leq\mu(B(x_{k},r))\leq
f(r)<\varepsilon$. Therefore, we can conclude that $\inf\{\mu(A);\ A\in
\mathcal{M},\ \mu(A)>0\}=0$.
\end{proof}

Using the Principle of Dependent Choice, we obtain the following lemma, which
will be very useful to give sufficient conditions for a measure space to
satisfy the conditions of Theorem \ref{t29}. We will use this lemma to give a
characterization of a measurable subset of positive measure through
$(\alpha,\beta)$-spaceability (see Corollaries \ref{c38} e \ref{c39}).

\begin{lemma}
\label{bl33} Let $X$ be a separable metric space and $(X,\mathcal{M},\mu)$ be
a measure space. Assume that $\mathcal{M}$ contains the $\sigma$-algebra of
Borel. If there is a function $f:(0,\infty)\rightarrow(0,\infty)$ with
$\lim_{x\rightarrow0}f(x)=0$ and $\mu(B(x,r))\leq f(r)$ for all $x\in X$ and
$r\in(0,\infty)$, then there is a sequence $\{X_{k}\}_{k\in\mathbb{N}}$ of
measurable pairwise disjoint subsets of $X$ such that $\mu(X_{k})>0$ for
all $k\in\mathbb{N}$ and $X=\cup_{k\in\mathbb{N}}X_{k}$.
\end{lemma}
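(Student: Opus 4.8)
The plan is to construct the sequence $\{X_k\}$ recursively, peeling off one positive-measure piece at a time while ensuring nothing of positive measure is lost at the end. The key observation is that Lemma \ref{bl32} already gives us $\inf\{\mu(A):A\in\mathcal{M},\ \mu(A)>0\}=0$, so for every $\varepsilon>0$ there is a measurable set of positive measure below $\varepsilon$; moreover, the covering argument inside Lemma \ref{bl32} shows that such small sets can be located \emph{inside any given piece of positive measure}. First I would verify this localized version: if $Y\in\mathcal{M}$ with $\mu(Y)>0$, then applying the ball-covering argument to $Y$ (which is itself a separable metric space with the restricted measure, and balls $B(x_k,r)\cap Y$ still obey the bound $f(r)$) produces a measurable $A\subset Y$ with $0<\mu(A)<\varepsilon$. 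This is the engine that drives the whole construction.

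Next I would set up the recursion via the Principle of Dependent Choice, as the statement suggests. Start with $R_0=X$, which has $\mu(R_0)=\mu(X)>0$. Having produced disjoint $X_1,\ldots,X_{k}$ with $R_k=X\smallsetminus\bigcup_{i=1}^k X_i$, split into two cases. If $\mu(R_k)=0$, I can absorb the (null) remainder into $X_k$ and stop, but to keep the sequence infinite one can instead continue by an arbitrary disjoint splitting of any already-chosen $X_i$ of positive measure into countably many positive-measure pieces (again using the localized small-set lemma), so that an infinite disjoint decomposition is always available. In the main case $\mu(R_k)>0$, I use the localized lemma to choose $X_{k+1}\subset R_k$ with $0<\mu(X_{k+1})<\tfrac12\mu(R_k)$; then $\mu(R_{k+1})=\mu(R_k)-\mu(X_{k+1})>\tfrac12\mu(R_k)>0$, so the recursion never stalls and every $X_{k+1}$ has positive measure and is disjoint from the earlier ones.

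The final and most delicate step is showing $X=\bigcup_{k}X_k$ up to the requirements of the statement, i.e.\ that the leftover $R_\infty=X\smallsetminus\bigcup_k X_k$ causes no problem. The naive recursion above only forces $\mu(R_{k+1})>\tfrac12\mu(R_k)$, which does \emph{not} drive $\mu(R_k)$ to $0$; this is the main obstacle. To fix it I would be greedier at each stage: let $s_k=\sup\{\mu(A):A\in\mathcal{M},\ A\subset R_k\}$ and choose $X_{k+1}\subset R_k$ with $\mu(X_{k+1})>\tfrac12 s_k$ (still keeping it small and of positive measure if desired by first intersecting with a small ball-neighbourhood). A standard exhaustion argument then shows $s_k\to 0$, forcing $\mu(R_\infty)=0$. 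Since the statement only asks for a countable disjoint family of positive-measure sets whose union is $X$, I would finally merge the null set $R_\infty$ into $X_1$ (replacing $X_1$ by $X_1\cup R_\infty$, which is still measurable of the same positive measure and still disjoint from the others), yielding the desired decomposition $X=\bigcup_{k\in\mathbb{N}}X_k$ with every $\mu(X_k)>0$. The use of Dependent Choice is exactly what legitimizes the simultaneous infinitely-many selections $X_{k+1}\subset R_k$.
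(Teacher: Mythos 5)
Your proof is, in substance, the same as the paper's, and it is correct once one superfluous detour is removed. The paper also proceeds by (i) extracting the localized form of Lemma \ref{bl32} (inside any measurable set of positive measure there is a measurable subset of strictly smaller, positive measure), (ii) running a recursion legitimized by the Principle of Dependent Choice to produce countably many pairwise disjoint positive-measure pieces $\tilde{X}_k$ whose removal always leaves a remainder of positive measure (the paper phrases this as a strictly increasing chain of finite disjoint families $\mathcal{X}_1\subsetneq\mathcal{X}_2\subsetneq\cdots$, each with positive-measure complement), and (iii) finishing with exactly your merge: $X_1=\tilde{X}_1\cup\bigl(X\smallsetminus\bigcup_{i}\tilde{X}_i\bigr)$ and $X_k=\tilde{X}_k$ for $k>1$.

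The one point to flag is that the ``main obstacle'' you identify is not an obstacle, and your fix for it is the only flawed part of the write-up. The merge in step (iii) works whether or not the residual $R_\infty$ is null: $X_1\cup R_\infty$ is measurable, disjoint from the other pieces, and has measure at least $\mu(X_1)>0$, which is all the lemma requires (your parenthetical ``of the same positive measure'' is true only when $R_\infty$ is null, but ``same'' is never needed). So the greedy exhaustion step can simply be deleted --- which is fortunate, because as literally stated it is broken. First, $s_k=\sup\{\mu(A):A\in\mathcal{M},\ A\subset R_k\}$ is just $\mu(R_k)$ (take $A=R_k$), so the supremum buys nothing. Worse, when $\mu(X)=\infty$ --- precisely the situation in the paper's main application, Corollary \ref{c38} with $X\subset\mathbb{R}^m$ of infinite Lebesgue measure --- one has $s_k=\infty$, and no measurable set can satisfy $\mu(X_{k+1})>\tfrac12 s_k$, so the prescribed recursion cannot even start. (It could be repaired, e.g.\ by choosing each remainder $R_{k+1}$ of measure less than $2^{-k}$ via the localized lemma, but again there is no need.) With that step excised, your naive recursion plus the final merge is exactly the paper's argument.
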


\begin{proof}
Consider the set
\[
A=\left\{  \mathcal{X}\subset\mathcal{P}(\mathcal{M}):
\begin{array}
[c]{l}%
1\leq\operatorname{card}(\mathcal{X})\leq\aleph_{0}; \ \ \ Y\cap Z=\emptyset \text{ if }
Y,Z\in\mathcal{X}, Y\neq Z;\\
\mu(Y)>0 \text{ if } Y\in\mathcal{X}; \ \ \ \mu\left(  X\smallsetminus
\textstyle\bigcup_{Y\in\mathcal{X}}Y\right)  >0
\end{array}
\right\}  .
\]
From Lemma \ref{bl32}, there is $Y\in\mathcal{M}$ such that $0<\mu
(Y)<\mu(X)$. Hence $\mu(X\smallsetminus Y)>0$, because $X=Y\cup
(X\smallsetminus Y)$ and otherwise, we have $\mu(X)=\mu(Y)+\mu
(X\smallsetminus Y)=\mu(Y)$, that is, $\{Y\}\in A$. Thus, $A$ is non-void.
Let us consider the following binary relation on $A$ given by
\[
R=\{(\mathcal{X},\mathcal{Y})\in A\times A;\ \mathcal{X}\subsetneq
\mathcal{Y}\}\text{.}%
\]
Given $\mathcal{X}\in A$, we have $\mu(X\smallsetminus%
{\textstyle\bigcup_{Y\in\mathcal{X}}}
Y)>0$. By Lemma \ref{bl32}, there is $Y_{\mathcal{X}}\in\mathcal{M}$ such that
$Y_{\mathcal{X}}\subset X\smallsetminus%
{\textstyle\bigcup_{Y\in\mathcal{X}}}
Y$ and $0<\mu(Y_{\mathcal{X}})<\mu(X\smallsetminus%
{\textstyle\bigcup_{Y\in\mathcal{X}}}
Y)$. This implies that
\begin{align*}
\mu\left(  X\smallsetminus%
{\textstyle\bigcup_{Y\in\mathcal{X}\cup\{Y_{\mathcal{X}}\}}}
Y\right)   &  =\mu\left(  X\smallsetminus\left(
{\textstyle\bigcup_{Y\in\mathcal{X}}}
Y\cup Y_{\mathcal{X}}\right)  \right) \\
&  =\mu\left(  \left(  X\smallsetminus%
{\textstyle\bigcup_{Y\in\mathcal{X}}}
Y\right)  \smallsetminus Y_{\mathcal{X}}\right)  >0,
\end{align*}
that is, $(\mathcal{X},\mathcal{X}\cup\{Y_{\mathcal{X}}\})\in R$. By the
Principle of Dependent Choice, we guarantee the existence of a sequence
$\{\mathcal{X}_{n}\}_{n\in\mathbb{N}}$ in $A$ such that $\mathcal{X}%
_{i}\subsetneq\mathcal{X}_{i+1}$, for each $i\in\mathbb{N}$. It is plain that
$\operatorname*{card}(\mathcal{X}_{n})\geq n$, for each $n\in\mathbb{N}$. This
assures that the set $B=%
{\textstyle\bigcup_{n\in\mathbb{N}}}
\mathcal{X}_{n}$ is countable. Let $\{\tilde{X}_{k}\}_{k\in\mathbb{N}}$ be an
enumeration of elements in $B$. If $n\neq m$, there are $r_{n},r_{m}%
\in\mathbb{N}$ such that $\tilde{X}_{n}\in\mathcal{X}_{r_{n}}$ and $\tilde
{X}_{m}\in\mathcal{X}_{r_{m}}$. Since the family $\{\mathcal{X}_{n}%
\}_{n\in\mathbb{N}}$ is fully ordered, we can assume without loss of
generality that $\mathcal{X}_{r_{n}}\subsetneq\mathcal{X}_{r_{m}}$, and thus
$\tilde{X}_{n},\tilde{X}_{m}\in\mathcal{X}_{r_{m}}$. Since $\mathcal{X}%
_{r_{m}}\in A$, we have $\mu(\tilde{X}_{n})>0$, $\mu(\tilde{X}_{m})>0$
and $\tilde{X}_{n}\cap\tilde{X}_{m}=\emptyset$. A sequence under the
conditions of the lemma is given by $X_{1}=\tilde{X}_{1}\cup\left(
X\smallsetminus%
{\textstyle\bigcup_{i\in\mathbb{N}}}
\tilde{X}_{i}\right)  $ and $X_{k}=\tilde{X}_{k}$, for $k>1$.
\end{proof}

Let us see a characterization of a measurable subset of positive measure
through $(\alpha,\beta)$-spaceability.

\begin{corollary}
\label{c38} Let $m\in\mathbb{N}$ and $p\in(0,\infty)$. Consider the Lebesgue
measure space $(\mathbb{R}^{m},\mathcal{L},\lambda)$ and let $X\in\mathcal{L}%
$. Then
\[
\lambda(X)>0\text{ if and only if }L_{p}(X)\smallsetminus%
{\textstyle\bigcup_{q\in(p,\infty]}}
L_{q}(X)\text{ is }(\alpha,\mathfrak{c})\text{-spaceable for all }%
\alpha<\aleph_{0}.
\]

\end{corollary}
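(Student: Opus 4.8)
The plan is to read the biconditional as two implications and to obtain the nontrivial (forward) direction as a direct application of Theorem \ref{t29} with $\Omega=X$, $\Gamma=(p,\infty]$, and a suitable countable partition of $X$. Since $(p,\infty]\subset\lbrack p,\infty]$, Proposition \ref{p2} tells us that $\bigcup_{q\in(p,\infty]}L_{q}(X_{k})\cap L_{p}(X_{k})$ is already a subspace for any measurable $X_{k}$, so $\operatorname{span}\{\bigcup_{q\in(p,\infty]}L_{q}(X_{k})\cap L_{p}(X_{k})\}=\bigcup_{q\in(p,\infty]}L_{q}(X_{k})\cap L_{p}(X_{k})$. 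The hypothesis of Theorem \ref{t29} therefore reduces to exhibiting pairwise disjoint measurable sets $\{X_{k}\}_{k\in\mathbb{N}}$ with $X=\bigcup_{k}X_{k}$ such that, for each $k$, the set $L_{p}(X_{k})\smallsetminus\bigcup_{q\in(p,\infty]}L_{q}(X_{k})$ is lineable.

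Assuming $\lambda(X)>0$, I would first produce the partition. Regarding $X$ as a separable metric space carrying the Lebesgue $\sigma$-algebra (which contains the Borel sets of $X$), the estimate $\lambda(B(x,r)\cap X)\leq\lambda(B(x,r))=c_{m}r^{m}=:f(r)$, with $f(r)\to 0$ as $r\to 0$, lets me invoke Lemma \ref{bl33} to obtain pairwise disjoint $X_{k}\in\mathcal{L}$ with $X=\bigcup_{k}X_{k}$ and $\lambda(X_{k})>0$ for every $k$. The same ball estimate, now applied on each $(X_{k},\mathcal{L}_{X_{k}},\lambda_{X_{k}})$ together with $\lambda(X_{k})>0$, yields condition $(\alpha)$ of Theorems \ref{t31} and \ref{t9} via Lemma \ref{bl32}. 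Consequently Theorem \ref{t9}\,$(1')$ guarantees that $L_{p}(X_{k})\smallsetminus\bigcup_{q\in(p,\infty]}L_{q}(X_{k})$ is spaceable, hence lineable, for each $k$. This is precisely the hypothesis of Theorem \ref{t29}, which then delivers that $L_{p}(X)\smallsetminus\bigcup_{q\in(p,\infty]}L_{q}(X)$ is $(\alpha,\mathfrak{c})$-spaceable for all $\alpha<\aleph_{0}$ (in fact, if and only if $\alpha<\aleph_{0}$).

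For the converse I would argue by contraposition. If $\lambda(X)=0$, then every measurable function on $X$ vanishes $\lambda$-almost everywhere, so $L_{p}(X)=\{0\}$ and the set $L_{p}(X)\smallsetminus\bigcup_{q\in(p,\infty]}L_{q}(X)$ is empty; a set whose union with $\{0\}$ has dimension $0$ cannot contain a closed $\mathfrak{c}$-dimensional subspace, so it fails to be $(0,\mathfrak{c})$-spaceable (that is, $\mathfrak{c}$-spaceable) and in particular is not $(\alpha,\mathfrak{c})$-spaceable for all $\alpha<\aleph_{0}$.

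The bulk of the work, and the only genuinely delicate point, lies in verifying the hypotheses of Theorem \ref{t29} in the forward direction --- namely, simultaneously partitioning $X$ into positive-measure pieces and ensuring each piece inherits condition $(\alpha)$. Both requirements are handled by the ball-volume bound $\lambda(B(x,r))=c_{m}r^{m}$, which feeds Lemmas \ref{bl33} and \ref{bl32}; everything else is bookkeeping, with Proposition \ref{p2} removing the $\operatorname{span}$ from the statement of Theorem \ref{t29} and Theorem \ref{t9}\,$(1')$ converting the measure-theoretic condition $(\alpha)$ into the lineability needed to start the argument.
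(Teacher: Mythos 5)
Your proof is correct and follows essentially the same route as the paper's own: Lemma \ref{bl33} for the positive-measure partition, Lemma \ref{bl32} combined with Theorem \ref{t9} for the spaceability (hence lineability) of each $L_{p}(X_{k})\smallsetminus\bigcup_{q\in(p,\infty]}L_{q}(X_{k})$, Proposition \ref{p2} to remove the span, and Theorem \ref{t29} to conclude, with the converse handled identically via $L_{p}(X)=\{0\}$. Your explicit verification of the ball-volume bound $\lambda(B(x,r))\leq c_{m}r^{m}$ makes precise a step the paper leaves implicit, but the argument is the same.
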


\begin{proof}
If $\lambda(X)=0$, we have $L_{p}(X)=\{0\}$, for each $p\in(0,\infty]$. Thus,
$L_{p}(X)\smallsetminus%
{\textstyle\bigcup_{q\in(p,\infty]}}
L_{q}(X)$ is not $(n,\mathfrak{c})$-spaceable, for each $n\in\mathbb{N}$. On
the other hand, if $\lambda(X)>0$, we can use Lemma \ref{bl33} and
consider the partition $X=
{\textstyle\bigcup_{i\in\mathbb{N}}}
X_{i}$ with $\lambda(X_{i})>0$ for all $i\in\mathbb{N}$. It follows from
Proposition \ref{p2} that $\operatorname{span}(
{\textstyle\bigcup_{q\in(p,\infty]}}
L_{q}(X_{i})\cap L_{p}(X_{i}))=
{\textstyle\bigcup_{q\in(p,\infty]}}
L_{q}(X_{i})\cap L_{p}(X_{i})$, for each $i\in\mathbb{N}$. Now, Lemma
\ref{bl32} together Theorem \ref{t9}, allows us to conclude that $L_{p}%
(X_{i})\smallsetminus%
{\textstyle\bigcup_{q\in(p,\infty]}}
L_{q}(X_{i})$ is spaceable, for every $i\in\mathbb{N}$. Finally, the Theorem
\ref{t29} assures that $L_{p}(X)\smallsetminus%
{\textstyle\bigcup_{q\in(p,\infty]}}
L_{q}(X)$ is $(n,\mathfrak{c})$-spaceable, for each $n\in\mathbb{N}$.
\end{proof}

Combining the Corollary \ref{c38} with \cite[Corollary 2.3]{fprs}, we also
obtain the following characterization:

\begin{corollary}
\label{c39} Let $m\in\mathbb{N}$, $p\in(0,\infty)$ and $X\subset\mathbb{R}%
^{m}$ be a measurable subset with $\lambda(X)>0$. The set $L_{p}(X)\smallsetminus%
{\textstyle\bigcup_{q\in(p,\infty]}}
L_{q}(X)$ is $(\alpha,\mathfrak{c})$-spaceable in $L_{p}(X)$ if and only if
$\alpha<\aleph_{0}$.
\end{corollary}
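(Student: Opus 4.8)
The plan is to obtain the biconditional by stitching together two one-sided facts already at our disposal: Corollary \ref{c38}, which supplies the implication ``$\alpha<\aleph_{0}\Rightarrow(\alpha,\mathfrak{c})$-spaceable'', and \cite[Corollary 2.3]{fprs}, which supplies the reverse implication in contrapositive form. Throughout I would abbreviate $A:=L_{p}(X)\smallsetminus\bigcup_{q\in(p,\infty]}L_{q}(X)$ and keep the standing assumption $\lambda(X)>0$.

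For the direction ``$\alpha<\aleph_{0}\Rightarrow A$ is $(\alpha,\mathfrak{c})$-spaceable'' there is essentially nothing to do: since $\lambda(X)>0$, Corollary \ref{c38} asserts precisely that $A$ is $(\alpha,\mathfrak{c})$-spaceable for every $\alpha<\aleph_{0}$, so I would simply quote it.

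For the converse I would argue by contraposition, proving that $\alpha\geq\aleph_{0}$ prevents $A$ from being $(\alpha,\mathfrak{c})$-spaceable. The first step is to realize $A$ as the complement of a single subspace. By Proposition \ref{p2}, applied with $\Gamma=(p,\infty]\subset[p,\infty]$, the set $W:=\bigcup_{q\in(p,\infty]}\left(L_{q}(X)\cap L_{p}(X)\right)$ is a subspace of $L_{p}(X)$; since $\bigcup_{q\in(p,\infty]}L_{q}(X)$ meets $L_{p}(X)$ exactly in $W$, one has $A=L_{p}(X)\smallsetminus W$. Moreover $W$ is a proper subspace, because Corollary \ref{c38} already forces $A\neq\emptyset$ (indeed it makes $A$ even $\mathfrak{c}$-spaceable). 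With $A$ displayed as the complement of a proper subspace of the (quasi-)Banach space $L_{p}(X)$, I would then invoke \cite[Corollary 2.3]{fprs}, exactly as in the discussion following Corollary \ref{Corollary 2.7}, to conclude that $A$ is not $(\alpha,\mathfrak{c})$-spaceable once $\alpha\geq\aleph_{0}$. Combining the two implications gives the asserted equivalence.

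The step that I expect to need the most care is the application of \cite[Corollary 2.3]{fprs} in the full range $p\in(0,\infty)$. Its use hinges on two structural facts that must be checked rather than assumed: that the removed union, intersected with the ambient space $L_{p}(X)$, is genuinely a vector subspace (this is exactly Proposition \ref{p2}, which in turn rests on the nestedness of $\{L_{q}(X)\cap L_{p}(X)\}_{q\in(p,\infty]}$ provided by the interpolation inequality of Proposition \ref{p1}), and that this subspace is proper. The delicate point is the non-locally-convex regime $0<p<1$, where the positive machinery of Theorem \ref{Theorem 2.4} is unavailable; here one must make sure the negative conclusion of \cite[Corollary 2.3]{fprs} still applies, which it does because that result is valid for all $p>0$. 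Once these verifications are in place, both implications are immediate.
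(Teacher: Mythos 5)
Your proposal is correct and follows exactly the paper's route: the paper derives Corollary \ref{c39} in one line by combining Corollary \ref{c38} (for the direction $\alpha<\aleph_{0}$) with \cite[Corollary 2.3]{fprs} (ruling out $\alpha\geq\aleph_{0}$), which is precisely your two-sided argument. Your additional verifications --- using Proposition \ref{p2} to exhibit the removed union as a proper subspace of $L_{p}(X)$ and noting that \cite[Corollary 2.3]{fprs} applies for all $p>0$, including $0<p<1$ --- are details the paper leaves implicit, and they check out.
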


\begin{remark}
Corollaries \ref{c38} and \ref{c39} are also true if we take $(X,\mathcal{M}
,\mu)$ as in Lemmas \ref{bl32} and \ref{bl33}, instead of $(\mathbb{R}
^{m},\mathcal{L},\lambda)$.
\end{remark}

In summary, we have:

\begin{proposition}
Let $m\in\mathbb{N}$ and $p\in(0,\infty)$. Consider the Lebesgue measure space
$(\mathbb{R}^{m},\mathcal{L},\lambda)$ and let $X\in\mathcal{L}$. The
following statements are equivalent:

\begin{itemize}
\item[(a)] $\lambda(X)>0$;

\item[(b)] $L_{p}(X)\smallsetminus%
{\textstyle\bigcup_{q\in(p,\infty]}}
L_{q}(X)$ is $(n,\mathfrak{c})$-spaceable for some $n<\aleph_{0}$;

\item[(c)] $L_{p}(X)\smallsetminus%
{\textstyle\bigcup_{q\in(p,\infty]}}
L_{q}(X)$ is $(\alpha,\mathfrak{c})$-spaceable for all $\alpha<\aleph_{0}$;

\item[(d)] $L_{p}(X)\smallsetminus%
{\textstyle\bigcup_{q\in(p,\infty]}}
L_{q}(X)$ is $\mathfrak{c}$-spaceable;

\item[(e)] $L_{p}(X)\smallsetminus%
{\textstyle\bigcup_{q\in(p,\infty]}}
L_{q}(X)$ is $\mathfrak{c}$-lineable.
\end{itemize}
\end{proposition}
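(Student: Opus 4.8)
The plan is to arrange the five statements into a single cycle of implications, so that the only substantive work is invoking Corollary \ref{c38}; everything else will be either a definitional specialization or a degenerate-case observation. Writing $A:=L_{p}(X)\smallsetminus\bigcup_{q\in(p,\infty]}L_{q}(X)$, I would first establish the forward chain (a) $\Rightarrow$ (c) $\Rightarrow$ (b) and (c) $\Rightarrow$ (d) $\Rightarrow$ (e), and then close the loop by a single contrapositive argument treating the case $\lambda(X)=0$.

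For the forward chain, the implication (a) $\Rightarrow$ (c) is exactly the content of Corollary \ref{c38}. From (c) the remaining forward steps are immediate. Indeed, (c) asserts $(\alpha,\mathfrak{c})$-spaceability of $A$ for \emph{every} $\alpha<\aleph_{0}$, so in particular for one finite value of $n$, which is (b); and specializing to $\alpha=0$ recovers plain $\mathfrak{c}$-spaceability (since $(0,\mathfrak{c})$-spaceability coincides with $\mathfrak{c}$-spaceability, just as $(0,\beta)$-lineability coincides with $\beta$-lineability in the definitions), which is (d). Finally (d) $\Rightarrow$ (e) holds because a closed $\mathfrak{c}$-dimensional subspace contained in $A\cup\{0\}$ is in particular a $\mathfrak{c}$-dimensional subspace contained in $A\cup\{0\}$; thus $\mathfrak{c}$-spaceability entails $\mathfrak{c}$-lineability.

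It then remains to prove (b) $\Rightarrow$ (a) and (e) $\Rightarrow$ (a), and I would dispatch both at once by contraposition. Assume $\lambda(X)=0$. Then, exactly as in the proof of Corollary \ref{c38}, $L_{q}(X)=\{0\}$ for every $q\in(0,\infty]$, whence $A=\{0\}\smallsetminus\{0\}=\emptyset$ and $A\cup\{0\}=\{0\}$. A zero-dimensional space contains no subspace of positive dimension, so $A$ is neither $\mathfrak{c}$-lineable (negating (e)) nor $(n,\mathfrak{c})$-spaceable for any $n<\aleph_{0}$ (negating (b): for $n\geq1$ such spaceability already presupposes $n$-lineability, which fails, while for $n=0$ it reduces to the impossible $\mathfrak{c}$-spaceability). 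This gives $\neg(\mathrm{a})\Rightarrow\neg(\mathrm{b})$ and $\neg(\mathrm{a})\Rightarrow\neg(\mathrm{e})$, i.e. (b) $\Rightarrow$ (a) and (e) $\Rightarrow$ (a).

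Assembling these yields the cycles (a) $\Rightarrow$ (c) $\Rightarrow$ (b) $\Rightarrow$ (a) and (c) $\Rightarrow$ (d) $\Rightarrow$ (e) $\Rightarrow$ (a) $\Rightarrow$ (c), so all five statements are equivalent. The genuine mathematical content rests entirely in Corollary \ref{c38}; the one point requiring care is the degenerate case $\lambda(X)=0$, where one must verify that the complement literally collapses to the empty set, so that even the weakest notion in the list, $\mathfrak{c}$-lineability, already fails.
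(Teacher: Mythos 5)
Your proof is correct and takes essentially the route the paper intends: the proposition appears there as a summary whose only substantive ingredient is Corollary \ref{c38} (giving (a) $\Leftrightarrow$ (c)), with the implications (c) $\Rightarrow$ (b), (c) $\Rightarrow$ (d) $\Rightarrow$ (e) being definitional and the degenerate case $\lambda(X)=0$, where $L_{p}(X)=\{0\}$ and the complement is empty, closing the cycle exactly as in the paper's proof of Corollary \ref{c38}. Your extra care with $n=0$ (using that $(0,\mathfrak{c})$-spaceability coincides with $\mathfrak{c}$-spaceability) is consistent with the paper's definitions and fills in a point the paper leaves implicit.
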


In \cite{Bernal} the authors showed that the set $\left(  \mathcal{C}%
_{0}\left(  I\right)  \cap R\left(  I\right)  \right)  \smallsetminus
{\bigcup_{p\in(0,\infty)}}L_{p}\left(  I\right)  $, is spaceable in
$\mathcal{C}_{0}\left(  I\right)  \cap R\left(  I\right)  $, under the norm
\[
\Vert f\Vert=\underset{x\geq0}{\sup}\left\vert f\left(  x\right)  \right\vert
+\underset{x\geq0}{\sup}\left\vert \int_{0}^{x}f\left(  t\right)
\,dt\right\vert \text{.}%
\]
Now we complement this result through the following corollary:

\begin{corollary}
The set
\[
\left(  \mathcal{C}_{0}\left(  I\right)  \cap R\left(  I\right)  \right)
\smallsetminus{\bigcup_{p\in(0,\infty)}}L_{p}\left(  I\right)
\]
is $(\alpha,\mathfrak{c})$-spaceable if and only if $\alpha<\aleph_{0}$.
\end{corollary}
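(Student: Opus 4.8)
The plan is to apply Corollary \ref{Corollary 2.7} with the ambient space $V = \mathcal{C}_{0}(I) \cap R(I)$, which is a Banach space under $\|f\| = \sup_{x\ge0}|f(x)| + \sup_{x\ge0}\bigl|\int_{0}^{x} f(t)\,dt\bigr|$, and a suitable countable family of subspaces whose union is exactly $\bigl(\bigcup_{p\in(0,\infty)} L_{p}(I)\bigr)\cap V$. The observation that drives the whole argument is that every $f\in V\subset\mathcal{C}_{0}(I)$ is bounded, so for $0<p\le q<\infty$ one has $\int_{I}|f|^{q}\,d\lambda \le \|f\|_{\infty}^{\,q-p}\int_{I}|f|^{p}\,d\lambda$, whence $L_{p}(I)\cap V \subseteq L_{q}(I)\cap V$. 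Setting $W_{r} := L_{r}(I)\cap V$ for $r\in\mathbb{N}$, the family $\{W_{r}\}_{r\in\mathbb{N}}$ is therefore an increasing chain of subspaces, so $\bigcup_{r\in\mathbb{N}}W_{r}$ is a subspace, and since every $p\in(0,\infty)$ satisfies $L_{p}(I)\cap V\subseteq W_{r}$ for any integer $r\ge\max\{p,1\}$, we get
\[
\bigcup_{p\in(0,\infty)} \bigl(L_{p}(I)\cap V\bigr) = \bigcup_{r\in\mathbb{N}} W_{r}.
\]
In particular $V\smallsetminus\bigcup_{p\in(0,\infty)}L_{p}(I) = V\smallsetminus\bigcup_{r\in\mathbb{N}}W_{r}$, which is precisely the shape required by Corollary \ref{Corollary 2.7}.

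Next I would verify the remaining hypotheses on the pieces $W_{r}$. Equip each with $\|f\|_{W_{r}} := \|f\| + \|f\|_{r}$ (here $r\ge1$, so $\|\cdot\|_{r}$ is the genuine $L_{r}$ norm); then trivially $\|\cdot\|\le\|\cdot\|_{W_{r}}$, so the domination $\|\cdot\|_{V}\le C_{r}\|\cdot\|_{W_{r}}$ holds with $C_{r}=1$. Completeness of $(W_{r},\|\cdot\|_{W_{r}})$ follows by the standard intersection argument: a $\|\cdot\|_{W_{r}}$-Cauchy sequence $(f_{n})$ is both $\|\cdot\|$-Cauchy, hence convergent in $V$ to some $f$ (in particular uniformly, so pointwise), and $\|\cdot\|_{r}$-Cauchy, hence convergent in $L_{r}(I)$ to some $g$ (so a.e.\ along a subsequence); the two limits coincide a.e., giving $f\in L_{r}(I)\cap V = W_{r}$ and $f_{n}\to f$ in $\|\cdot\|_{W_{r}}$.

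The main point, and the only step relying on external input, is to show that $\bigcup_{r\in\mathbb{N}}W_{r}$ has infinite codimension in $V$. Here I would invoke the result of Bernal-Gonz\'{a}lez and Ord\'{o}\~{n}ez Cabrera \cite{Bernal} recalled above: the set $V\smallsetminus\bigcup_{p\in(0,\infty)}L_{p}(I)$ is spaceable, so there is a closed, infinite-dimensional subspace $F\subseteq V$ with $(F\smallsetminus\{0\})\cap\bigcup_{p\in(0,\infty)}L_{p}(I)=\emptyset$. Since $F\subseteq V$, this says exactly $F\cap\bigcup_{r\in\mathbb{N}}W_{r}=\{0\}$, so the quotient map $V\to V\big/\bigcup_{r\in\mathbb{N}}W_{r}$ restricts to an injection on $F$, and hence $\dim\bigl(V\big/\bigcup_{r\in\mathbb{N}}W_{r}\bigr)\ge\dim F=\aleph_{0}$; the codimension is infinite.

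With all hypotheses in place, Corollary \ref{Corollary 2.7} applies directly and yields that $V\smallsetminus\bigcup_{r\in\mathbb{N}}W_{r} = (\mathcal{C}_{0}(I)\cap R(I))\smallsetminus\bigcup_{p\in(0,\infty)}L_{p}(I)$ is $(\alpha,\mathfrak{c})$-spaceable if and only if $\alpha<\aleph_{0}$, which is the claim. I expect the only genuinely delicate routine checks to be the completeness of $V$ under the stated combined norm and of each $(W_{r},\|\cdot\|_{W_{r}})$; the load-bearing idea is the nesting $L_{p}\cap V\subseteq L_{q}\cap V$ for $p\le q$, which collapses the uncountable union into a countable increasing chain and lets Corollary \ref{Corollary 2.7}, together with the spaceability input from \cite{Bernal}, do the rest.
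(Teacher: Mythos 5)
Your proposal is correct and follows essentially the same route as the paper: the same choice $V=\mathcal{C}_{0}(I)\cap R(I)$, the same pieces $W_{r}=V\cap L_{r}(I)$ made Banach under the combined norm, the same boundedness-based nesting $L_{p}(I)\cap V\subseteq L_{q}(I)\cap V$ collapsing the union to a countable chain, and the same appeal to Corollary \ref{Corollary 2.7}. The only difference is that you explicitly verify the infinite-codimension hypothesis via the spaceability result of \cite{Bernal} (correctly, since a closed infinite-dimensional $F$ meeting the union only in $0$ injects into the quotient), a step the paper's proof leaves implicit.
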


\begin{proof}
Considering $V=\mathcal{C}_{0}\left(  I\right)  \cap R\left(  I\right)  $ and
$W_{r}=\mathcal{C}_{0}\left(  I\right)  \cap R\left(  I\right)  \cap
L_{r}\left(  I\right)  $, $\left(  r\in\mathbb{N}\right)  $, we have
$\mathcal{C}_{0}\left(  I\right)  \cap L_{p}\left(  I\right)  \subset
\mathcal{C}_{0}\left(  I\right)  \cap L_{q}\left(  I\right)  $, for $p\leq q$.
Hence, the set $\bigcup_{p\in(0,\infty)}L_{p}\left(  I\right)  \cap
\mathcal{C}_{0}\left(  I\right)  \cap R\left(  I\right)  $ is a vector
subspace of $\mathcal{C}_{0}\left(  I\right)  \cap R\left(  I\right)  $. For
every $r\in\mathbb{N}$, since $\mathcal{C}_{0}\left(  I\right)  \cap R\left(
I\right)  \cap L_{r}\left(  I\right)  $ is a Banach space under the norm
\[
\Vert f\Vert=\underset{x\geq0}{\sup}\left\vert f\left(  x\right)  \right\vert
+\underset{x\geq0}{\sup}\left\vert \int_{0}^{x}f\left(  t\right)
\,dt\right\vert + \left(  \int_{0}^{\infty}|f(x)|^{r}dx\right)  ^{\frac{1}%
{r}}
\]
the result follows from Corollary \ref{Corollary 2.7}.
\end{proof}

\subsection{Sequence spaces}

Let $X$ be a Banach space over $\mathbb{K}$. As usual, $X^{\mathbb{N}}$ will
denote the space of all $X$-valued sequences endowed with the natural
operations of vector sum and scalar multiplication. Now we remember the
definitions of some subspaces of $X^{\mathbb{N}}$ that will be investigated in
this section:

\begin{itemize}
\item If $0< p<\infty$, then
\[
\ell_{p}\left(  X\right)  :=\left\{  \left(  x_{n}\right)  _{n=1}^{\infty}\in
X^{\mathbb{N}}:\sum_{n=1}^{\infty}\left\Vert x_{n}\right\Vert ^{p}%
<\infty\right\}  \text{.}%
\]
is the space of all $p$-summable sequences in $X^{\mathbb{N}}$. The function
$\left\Vert \cdot\right\Vert _{p}\colon\ell_{p}\left(  X\right)
\rightarrow\mathbb{R}$ given by%
\[
\left\Vert \left(  x_{n}\right)  _{n=1}^{\infty}\right\Vert _{p}=\left(
\sum_{n=1}^{\infty}\left\Vert x_{n}\right\Vert ^{p}\right)  ^{1/p}%
\]
defines a norm over $\ell_{p}\left(  X\right)  $ when $1\leq p<\infty$ and
defines a $p$-norm when $0<p<1$. In the first situation $\ell_{p}\left(
X\right)  $ is a Banach space and, in the sequence, a $p$-Banach space. When
$X=\mathbb{K}$, as usual, we write $\ell_{p}$ instead $\ell_{p}\left(
\mathbb{K}\right)  $.

\item The Banach space of all bounded sequences $\left(  x_{n}\right)
_{n=1}^{\infty}\in X^{\mathbb{N}}$ with the norm $\left\Vert \cdot\right\Vert
_{\infty}$ defined by%
\[
\left\Vert \left(  x_{n}\right)  _{n=1}^{\infty}\right\Vert _{\infty}%
=\sup_{n\in\mathbb{N}}\left\Vert x_{n}\right\Vert
\]
will be denoted by $\ell_{\infty}\left(  X\right)  $. When $X=\mathbb{K}$, we
write $\ell_{\infty}$ instead $\ell_{\infty}\left(  \mathbb{K}\right)  $.

\item The set $c(X)$ of all convergent sequences in $X^{\mathbb{N}}$ is a
closed subspace of $\ell_{\infty}(X)$ with the norm $\left\Vert \cdot
\right\Vert _{\infty}$. When $X=\mathbb{K}$, we write $c$ instead $c\left(
\mathbb{K}\right)  $.

\item The Banach space of all sequences $\left(  x_{n}\right)  _{n=1}^{\infty
}\in X^{\mathbb{N}}$ such that $\lim x_{n}=0$ with the norm $\left\Vert
\cdot\right\Vert _{\infty}$ will be denoted by $c_{0}\left(  X\right)  $. When
$X=\mathbb{K}$, we write $c_{0}$ instead $c_{0}\left(  \mathbb{K}\right)  $.

\item The space of all eventually null sequences in $X^{\mathbb{N}}$, that we
denote by $c_{00}\left(  X\right)  $ ($c_{00}$, when $X=\mathbb{K}$), is the
space of all sequence $\left(  x_{n}\right)  _{n=1}^{\infty}$ for which there
is a positive integer $n_{0}$ such that $x_{n}=0$ whenever $n\geq n_{0}$.

\item Let $X^{\ast}$ be the topological dual of $X$, that is, the space of all
continuous linear functional over $X$, and let $1\leq p<\infty$. Then $\ell
_{p}^{w}\left(  X\right)  $ will denote the Banach space of all sequences
$\left(  x_{n}\right)  _{n=1}^{\infty}\in X^{\mathbb{N}}$ such that $\left(
\varphi\left(  x_{n}\right)  \right)  _{n=1}^{\infty}\in\ell_{p}$ for every
$\varphi\in X^{\ast}$ with the norm $\left\Vert \cdot\right\Vert _{p,w}$
defined by%
\[
\left\Vert \left(  x_{n}\right)  _{n=1}^{\infty}\right\Vert _{p,w}%
=\sup_{\varphi\in B_{X^{\ast}}}\left\Vert \left(  \varphi\left(  x_{n}\right)
\right)  _{n=1}^{\infty}\right\Vert _{p}\text{,}%
\]
where $B_{X^{\ast}}$ is the closed unit ball of $X^{\ast}$. When
$X=\mathbb{K}$, we write $\ell_{p}^{w}$ instead $\ell_{p}^{w}\left(
\mathbb{K}\right)  $.

\item The space of all sequences $\left(  x_{n}\right)  _{n=1}^{\infty}\in
\ell_{p}^{w}\left(  X\right)  $ such that
\[
\lim_{k\rightarrow\infty}\left\Vert \left(  x_{n}\right)  _{n=k}^{\infty
}\right\Vert _{p,w}=0
\]
will be denoted by $\ell_{p}^{u}\left(  X\right)  $. When endowed with the
norm $\left\Vert \cdot\right\Vert _{p,w}$, $\ell_{p}^{u}\left(  X\right)  $
becomes a closed subspace of $\ell_{p}^{w}\left(  X\right)  $. It is usual to
say that $\ell_{p}^{u}\left(  X\right)  $ is the space of unconditional
$p$-summable sequences in $X^{\mathbb{N}}$.

\item For $1\leq p<\infty$, let $\ell_{p}\left\langle X\right\rangle $ the
Banach space of all Cohen $p$-summable sequences $\left(  x_{n}\right)
_{n=1}^{\infty}\in X^{\mathbb{N}}$, that is, the space of all $X$-valued
sequences such that%
\[
\left\Vert \left(  x_{n}\right)  _{n=1}^{\infty}\right\Vert _{\ell
_{p}\left\langle X\right\rangle }:=\sup\left\{  \sum_{n=1}^{\infty}\left\vert
y_{n}^{\ast}\left(  x_{n}\right)  \right\vert :\left(  y_{n}\right)
_{n=1}^{\infty}\in\ell_{p}^{w}\left(  X^{\ast}\right)  \text{ and }\left\Vert
\left(  y_{n}^{\ast}\right)  _{n=1}^{\infty}\right\Vert _{p^{\ast}%
,w}=1\right\}  <\infty\text{,}%
\]
where $p^{\ast}$ is the Lebesgue conjugate of $p$. If $X=\mathbb{K}$, then
$\ell_{p}\left\langle \mathbb{K}\right\rangle =\ell_{p}$.

\item Given $0<p\leq s\leq\infty$, by $\ell_{m(s;p)}(X)$ we mean the Banach
($p$-Banach if $0<p<1$) space of all mixed $(s,p)$-summable sequences on $X$
(see \cite{BDFP}), i.e., the space of all sequences $(x_n)_{n=1}^{\infty} \in X^{\mathbb{N}}$ such that $x_n=\lambda_ny_n$ for all $n\in\mathbb{N}$, with $(\lambda_n)_{n=1}^\infty\in \ell_{s^{\ast}(q)}$ and $(y_n)_{n=1}^\infty\in \ell_s^w(X)$, where $s^{\ast}(q)$ is the $q$-Lebesgue conjugate of $s$, that is, $1/s+1/s^{\ast}(q)=1/q$.


\end{itemize}

A first application in this context is the following result, which is an
immediate consequence of Corollary \ref{coro1}:

\begin{corollary}
Let $X=c,c_{0}$ or $\ell_{p}$, $p\in[1,\infty]$. The set $X\smallsetminus
c_{00}$ is $(\alpha,\mathfrak{c})$-spaceable in $X$ if and only if $\alpha<\aleph
_{0}$.
\end{corollary}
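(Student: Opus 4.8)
The plan is to recognize the statement as nothing more than the instance of Corollary \ref{Corollary 2.7} obtained by taking $V=X$ and $W_{r}=\operatorname{span}\{e_{1},\ldots,e_{r}\}$, where $(e_{n})_{n=1}^{\infty}$ denotes the canonical unit vectors. Accordingly, I would split the biconditional exactly as in the proof of Corollary \ref{Corollary 2.7}: the forward implication (that $\alpha<\aleph_{0}$ forces $(\alpha,\mathfrak{c})$-spaceability) comes from Corollary \ref{coro1}, while the converse (that $\alpha\geq\aleph_{0}$ precludes it) comes from \cite[Corollary 2.3]{fprs}. The only genuine work is checking that $c_{00}$ meets the hypotheses; once that is done, both halves are immediate.

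First I would verify the hypotheses of Corollary \ref{coro1} for the ``if'' direction. In each of the admissible cases $X=c$, $X=c_{0}$, and $X=\ell_{p}$ with $p\in[1,\infty]$, the space $X$ is Banach, and $c_{00}$ is a vector subspace of $X$: an eventually null sequence is convergent to $0$ (hence lies in $c_{0}\subset c$), is bounded (hence lies in $\ell_{\infty}$), and is $p$-summable for every $p$. Moreover $\dim(c_{00})=\aleph_{0}$, since $\{e_{n}\}_{n\in\mathbb{N}}$ is a Hamel basis of $c_{00}$. Thus Corollary \ref{coro1}, applied with $V=X$ and $W=c_{00}$, yields that $X\smallsetminus c_{00}$ is $(\alpha,\mathfrak{c})$-spaceable for every $\alpha<\aleph_{0}$.

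For the ``only if'' direction I would write $c_{00}=\bigcup_{r=1}^{\infty}W_{r}$ with $W_{r}=\operatorname{span}\{e_{1},\ldots,e_{r}\}$. Each $W_{r}$ is finite dimensional, hence $(W_{r},\Vert\cdot\Vert_{W_{r}})$ is Banach for $\Vert\cdot\Vert_{W_{r}}:=\Vert\cdot\Vert_{X}$, with the required domination holding trivially with $C_{r}=1$. Finally, $c_{00}$ has infinite codimension in $X$, because $\dim(X)=\mathfrak{c}>\aleph_{0}=\dim(c_{00})$ forces $\dim(X/c_{00})=\mathfrak{c}$. These are precisely the hypotheses of Corollary \ref{Corollary 2.7}, whose reverse implication (furnished by \cite[Corollary 2.3]{fprs}) gives that $X\smallsetminus c_{00}$ is not $(\alpha,\mathfrak{c})$-spaceable when $\alpha\geq\aleph_{0}$. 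I do not anticipate a real obstacle here; the single point deserving a line of care is the infinite-codimensionality of $c_{00}$, which is the one place the argument uses more than the mere fact that $c_{00}$ is a countably infinite-dimensional subspace.
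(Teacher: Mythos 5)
Your proof is correct and follows essentially the same route as the paper, which presents this corollary as an immediate consequence of Corollary \ref{coro1} (giving the positive direction for $\alpha<\aleph_{0}$), with the failure for $\alpha\geq\aleph_{0}$ coming from \cite[Corollary 2.3]{fprs} via the hypotheses of Corollary \ref{Corollary 2.7}, exactly as you argue with the decomposition $c_{00}=\bigcup_{r}\operatorname{span}\{e_{1},\ldots,e_{r}\}$. Your extra care over the infinite codimension of $c_{00}$ (via $\dim(X)=\mathfrak{c}>\aleph_{0}$) is the right point to check and is handled correctly.
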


The following result is already known in the literature (see \cite{FPT}).
Below, as a consequence of Theorems \ref{Theorem 2.4} and \ref{t29}, we show
how we can recover such a result.

\begin{corollary}
Let $p\in(0,\infty)$. The set $\ell_{p}\smallsetminus%
{\textstyle\bigcup_{0<q<p}}
\ell_{q}$ is $(\alpha,\mathfrak{c})$-spaceable if and only if $\alpha
<\aleph_{0}$.
\end{corollary}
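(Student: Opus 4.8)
The plan is to realize both spaces as Lebesgue spaces over the counting measure: writing $\Omega=\mathbb{N}$, $\mathcal{M}=\mathcal{P}(\mathbb{N})$ and $\mu$ the counting measure, one has $\ell_{q}=L_{q}(\Omega)$ for every $q\in(0,\infty)$, so that
\[
\ell_{p}\smallsetminus\bigcup_{0<q<p}\ell_{q}=L_{p}(\Omega)\smallsetminus\bigcup_{q\in\Gamma}L_{q}(\Omega),\qquad\Gamma:=(0,p).
\]
Since $\ell_{q_{1}}\subset\ell_{q_{2}}$ whenever $q_{1}\le q_{2}$, the family $\{\ell_{q}\}_{0<q<p}$ is totally ordered by inclusion, hence its union is a subspace of $\ell_{p}$; moreover, for any $q_{r}\nearrow p$ one has $\bigcup_{0<q<p}\ell_{q}=\bigcup_{r\in\mathbb{N}}\ell_{q_{r}}$. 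With this reduction, \emph{both} implications of the stated equivalence come directly from the theorems invoked below, since each is already phrased as an equivalence; the only genuine work is to exhibit lineability of the complement.

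For lineability I would fix a single sequence witnessing membership in $\ell_{p}$ but in no smaller $\ell_{q}$, e.g. $u=(u_{n})_{n}$ with $u_{n}=n^{-1/p}(\log(n+1))^{-2/p}$, for which $\sum_{n}|u_{n}|^{p}<\infty$ while $\sum_{n}|u_{n}|^{q}=\infty$ for every $q<p$ (because the exponent $q/p$ is $<1$, the log factor cannot restore convergence). Splitting $\mathbb{N}$ into infinitely many pairwise disjoint infinite blocks $\{N_{j}\}_{j}$ and letting $y_{j}$ be the copy of $u$ supported on $N_{j}$, the vectors $\{y_{j}\}_{j}$ are linearly independent and, by disjointness of supports, any nonzero finite combination with $a_{j_{0}}\neq0$ satisfies $\sum_{n}|(\sum_{j}a_{j}y_{j})_{n}|^{q}\ge|a_{j_{0}}|^{q}\sum_{n\in N_{j_{0}}}|u_{n}|^{q}=\infty$ for all $q<p$. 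Hence $\operatorname{span}\{y_{j}\}_{j}\smallsetminus\{0\}\subset\ell_{p}\smallsetminus\bigcup_{q<p}\ell_{q}$, so the complement is lineable; the identical argument carried out inside any infinite $\Lambda\subset\mathbb{N}$ shows $L_{p}(\Lambda)\smallsetminus\bigcup_{q<p}L_{q}(\Lambda)$ is lineable. In particular $\bigcup_{q<p}\ell_{q}$ has infinite codimension in $\ell_{p}$.

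For $p>1$ I would invoke Corollary \ref{Corollary 2.7} (hence Theorem \ref{Theorem 2.4}). Since $\ell_{q}\subset\ell_{1}$ for $q\le1$, one has $\bigcup_{0<q<p}\ell_{q}=\bigcup_{r}\ell_{q_{r}}$ for a sequence $q_{r}\in[1,p)$ with $q_{r}\nearrow p$; each $W_{r}:=\ell_{q_{r}}$ is then a genuine Banach space, and the elementary inequality $\|x\|_{p}\le\|x\|_{q_{r}}$ gives the required continuity $\|\cdot\|_{V}\le C_{r}\|\cdot\|_{W_{r}}$ with $C_{r}=1$. Together with the infinite codimension established above, Corollary \ref{Corollary 2.7} yields $(\alpha,\mathfrak{c})$-spaceability precisely for $\alpha<\aleph_{0}$.

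The case $0<p\le1$ is where Theorem \ref{Theorem 2.4} no longer applies, since the spaces $\ell_{q}$ with $q<p\le1$ are merely quasi-Banach; this is the main obstacle, and it is exactly what Theorem \ref{t29} is designed to overcome. Here I would partition $\mathbb{N}=\bigcup_{i}\Omega_{i}$ into pairwise disjoint infinite sets, so that $L_{p}(\Omega_{i})\cong\ell_{p}$ for each $i$. Because $\Gamma=(0,p)$ lies entirely below $p$, Proposition \ref{p2} (and the Remark following Theorem \ref{t29}) gives $\operatorname{span}\{\bigcup_{q\in\Gamma}L_{q}(\Omega_{i})\cap L_{p}(\Omega_{i})\}=\bigcup_{q\in\Gamma}L_{q}(\Omega_{i})\cap L_{p}(\Omega_{i})$, so the hypothesis of Theorem \ref{t29} reduces to lineability of $L_{p}(\Omega_{i})\smallsetminus\bigcup_{q<p}L_{q}(\Omega_{i})$, which holds by the construction above. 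Theorem \ref{t29} then delivers the equivalence for $0<p\le1$ as well, completing the proof.
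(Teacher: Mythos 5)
Your proof is correct and follows essentially the same two-step strategy as the paper: reduce to Lebesgue spaces over the counting measure, handle the locally convex range via Corollary \ref{Corollary 2.7}, and handle the quasi-Banach range via Theorem \ref{t29} applied to a partition of $\mathbb{N}$ into infinitely many disjoint infinite blocks, using Proposition \ref{p2} to identify the span with the union. There are two genuine differences worth noting. First, you construct the lineability witness explicitly (the sequence $u_{n}=n^{-1/p}(\log(n+1))^{-2/p}$ copied onto disjoint blocks, with the divergence for every $q<p$ checked by disjointness of supports), whereas the paper only invokes the isometric isomorphism $L_{r}(\mathbb{N}_{i})\cong\ell_{r}$ and treats the lineability of $\ell_{p}\smallsetminus\bigcup_{0<q<p}\ell_{q}$ as known; your version is therefore more self-contained. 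Second, and more substantively, you route the case $p=1$ through Theorem \ref{t29}, while the paper asserts that all of $p\geq1$ follows ``straightforwardly'' from Corollary \ref{Corollary 2.7}. Your split $p>1$ versus $0<p\leq1$ is in fact the more careful one: for $p=1$ the union is $\bigcup_{0<q<1}\ell_{q}$, whose members are quasi-Banach and non-locally-convex, so it is not at all clear that they admit Banach norms dominating $\Vert\cdot\Vert_{1}$ as Corollary \ref{Corollary 2.7} requires (note that Remark \ref{r2} and Corollary \ref{c5} themselves restrict to $\Gamma\subset(1,\infty]$ when $p=1$ for precisely this reason), whereas the Theorem \ref{t29} argument works verbatim at $p=1$. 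Your observation that for $p>1$ one may replace $\bigcup_{0<q<p}\ell_{q}$ by $\bigcup_{q\in[1,p)}\ell_{q}$, since $\ell_{q}\subset\ell_{1}$ for $q\leq1$, is the right way to make the Banach hypothesis of Corollary \ref{Corollary 2.7} available there.
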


\begin{proof}
The case $p\geq1$ is a straightforward consequence of Corollary
\ref{Corollary 2.7}. For the case $0<p<1$, it is enough to remember that
$\ell_{r}=\ell_{r}(\mathbb{N})=L_{r}(\mathbb{N},\mathcal{P}(\mathbb{N}),\mu)$
for each $r>0$, where the measure $\mu$ is given by $\mu
(A)=\operatorname*{card}(A)$ whenever $\operatorname*{card}(A)<\aleph_{0}$ and
$\mu(A)=\infty$ otherwise. It is well-known the existence of a sequence
$\{\mathbb{N}_{i}\}_{i\in\mathbb{N}}$ of pairwise disjoint sets such that
$\operatorname*{card}(\mathbb{N}_{i})=\aleph_{0}$ for each $i\in\mathbb{N}$
and $\mathbb{N}=%
{\textstyle\bigcup_{i\in\mathbb{N}}}
\mathbb{N}_{i}$. Furthermore, for each $i\in\mathbb{N}$,
\[
L_{r}(\mathbb{N}_{i},\mathcal{P}(\mathbb{N})_{\mathbb{N}_{i}},\mu
_{\mathbb{N}_{i}})=L_{r}(\mathbb{N}_{i},\mathcal{P}(\mathbb{N}_{i}%
),\mu_{\mathbb{N}_{i}})
\]
is naturally isometrically isomorphic to $L_{r}(\mathbb{N},\mathcal{P}%
(\mathbb{N}),\mu)$ for every $r>0$, which implies that we have the lineability of
\[
L_{p}(\mathbb{N}_{i},\mathcal{P}(\mathbb{N})_{\mathbb{N}_{i}},\mu
_{\mathbb{N}_{i}})\smallsetminus%
{\textstyle\bigcup_{0<q<p}}
L_{q}(\mathbb{N}_{i},\mathcal{P}(\mathbb{N})_{\mathbb{N}_{i}},\mu
_{\mathbb{N}_{i}})\text{.}%
\]
Therefore, as a consequence of Theorem \ref{t29} we can infer that
\[
\ell_{p}\smallsetminus%
{\textstyle\bigcup_{0<q<p}}
\ell_{q}=L_{p}(\mathbb{N},\mathcal{P}(\mathbb{N}),\mu)\smallsetminus%
{\textstyle\bigcup_{0<q<p}}
L_{q}(\mathbb{N},\mathcal{P}(\mathbb{N}),\mu)
\]
is $(\alpha,\mathfrak{c})$-spaceable for every $\alpha<\aleph_{0}$.
\end{proof}

In \cite{bbfp,BCP,BDFP} the authors use different techniques for checking
spaceability in different types of sequence spaces. Here, our goal now is to
investigate such results in the framework of $\left(  \alpha,\beta\right)  $-spaceability.

\begin{lemma}
\label{t12}Let $X$ be a normed space. If $E,F\subset X^{\mathbb{N}}$ are
Banach spaces carrying the norms $\Vert\cdot\Vert_{E}$ and $\Vert\cdot
\Vert_{F}$, respectively, and norm convergence in these spaces implies
coordinatewise convergence in $X$, then the space $E\cap F$ is a Banach space
with the norm $\Vert\cdot\Vert_{E}+\Vert\cdot\Vert_{F}$.
\end{lemma}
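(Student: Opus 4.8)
The plan is to show that $\Vert\cdot\Vert_{E}+\Vert\cdot\Vert_{F}$ is a norm on $E\cap F$ and that $(E\cap F,\Vert\cdot\Vert_{E}+\Vert\cdot\Vert_{F})$ is complete. First I would verify that $E\cap F$ is a vector subspace of $X^{\mathbb{N}}$ and that $\Vert\cdot\Vert_{E\cap F}:=\Vert\cdot\Vert_{E}+\Vert\cdot\Vert_{F}$ is indeed a norm. This is routine: nonnegativity, homogeneity, and the triangle inequality follow immediately from the corresponding properties of $\Vert\cdot\Vert_{E}$ and $\Vert\cdot\Vert_{F}$, and $\Vert x\Vert_{E\cap F}=0$ forces $\Vert x\Vert_{E}=0$, hence $x=0$.

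The substance of the proof is completeness, and this is where the hypothesis on coordinatewise convergence does the real work. Let $(x^{(k)})_{k=1}^{\infty}$ be a Cauchy sequence in $(E\cap F,\Vert\cdot\Vert_{E\cap F})$. Since $\Vert\cdot\Vert_{E}\leq\Vert\cdot\Vert_{E\cap F}$ and $\Vert\cdot\Vert_{F}\leq\Vert\cdot\Vert_{E\cap F}$, the sequence $(x^{(k)})_{k}$ is Cauchy in both $(E,\Vert\cdot\Vert_{E})$ and $(F,\Vert\cdot\Vert_{F})$. By completeness of these two spaces, there exist $a\in E$ and $b\in F$ with $x^{(k)}\to a$ in $\Vert\cdot\Vert_{E}$ and $x^{(k)}\to b$ in $\Vert\cdot\Vert_{F}$.

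The key step is to identify $a=b$, and this is precisely what coordinatewise convergence buys us. By hypothesis, norm convergence in $E$ implies coordinatewise convergence in $X$, so the $n$-th coordinate of $x^{(k)}$ converges to the $n$-th coordinate of $a$ in $X$, for each $n$; likewise the $n$-th coordinate of $x^{(k)}$ converges to the $n$-th coordinate of $b$. Since limits in the normed space $X$ are unique, the $n$-th coordinate of $a$ equals the $n$-th coordinate of $b$ for every $n\in\mathbb{N}$, whence $a=b$ as elements of $X^{\mathbb{N}}$. Set $x:=a=b$; then $x\in E\cap F$, and
\[
\Vert x^{(k)}-x\Vert_{E\cap F}=\Vert x^{(k)}-x\Vert_{E}+\Vert x^{(k)}-x\Vert_{F}\longrightarrow 0
\]
as $k\to\infty$, so $x^{(k)}\to x$ in $\Vert\cdot\Vert_{E\cap F}$. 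Thus every Cauchy sequence converges, and $(E\cap F,\Vert\cdot\Vert_{E}+\Vert\cdot\Vert_{F})$ is a Banach space.

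The \emph{main obstacle} — really the only nontrivial point — is matching the two limits $a$ and $b$; without the coordinatewise convergence hypothesis one could not conclude that the limit in the $E$-topology and the limit in the $F$-topology coincide as sequences, and the argument would collapse. Everything else is bookkeeping with the norm axioms and the elementary fact that a domination of norms transfers the Cauchy property.
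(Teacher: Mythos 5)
Your proof is correct and follows essentially the same route as the paper's: project the Cauchy sequence into $E$ and $F$, obtain limits in each, and use the coordinatewise-convergence hypothesis (with uniqueness of limits in $X$) to identify the two limits before concluding convergence in the sum norm. Your version merely makes explicit the routine norm-axiom checks and the limit-identification step that the paper leaves terse.
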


\begin{proof}
Let $\{(x_{n}^{(k)})_{n\in\mathbb{N}}\}_{k\in\mathbb{N}}$ be a Cauchy sequence
in $(E\cap F,\Vert\cdot\Vert_{E}+\Vert\cdot\Vert_{F})$. Then this sequence is
a Cauchy sequence both in $(E,\Vert\cdot\Vert_{E})$ and $(F,\Vert\cdot
\Vert_{F})$. Since $(E,\Vert\cdot\Vert_{E})$ and $(F,\Vert\cdot\Vert_{F})$ are
Banach spaces, there are sequences $(x_{n})_{n\in\mathbb{N}}$ and
$(y_{n})_{n\in\mathbb{N}}$ such that $\Vert(x_{n}^{(k)})_{n\in\mathbb{N}%
}-(x_{n})_{n\in\mathbb{N}}\Vert_{E}\overset{n\rightarrow\infty}%
{\longrightarrow}0$ and $\Vert(x_{n}^{(k)})_{n\in\mathbb{N}}-(y_{n}%
)_{n\in\mathbb{N}}\Vert_{F}\overset{n\rightarrow\infty}{\longrightarrow}0$.
Since norm convergence in this spaces implies coordinatewise convergence in
$X$, we can conclude that $(x_{n})_{n\in\mathbb{N}}=(y_{n})_{n\in\mathbb{N}%
}\in E\cap F$ and $(x_{n}^{(k)})_{n\in\mathbb{N}}\overset{n\rightarrow\infty
}{\longrightarrow}(x_{n})_{n\in\mathbb{N}}$ in $(E\cap F,\Vert\cdot\Vert
_{E}+\Vert\cdot\Vert_{F})$.
\end{proof}

Let $\left(  X_{n}\right)  _{n=1}^{\infty}$ be a sequence of Banach spaces over $\mathbb{K}$. Given $0<p<\infty$, by $\left(\sum_{n}X_{n}\right)_{p}$ we mean the vector space of of all sequences $\left(  x_{n}\right)_{n=1}^{\infty}$ such that $x_{n}\in X_{n}$ for every $n$ and $\left(\left\Vert x_{n}\right\Vert _{X_{n}}\right)  _{n=1}^{\infty}\in\ell_{p}$. Making the obvious modification for $p = 0$, we define $\left(\sum_{n}X_{n}\right)_{0}$ as the vector space of of all sequences $\left(x_{n}\right)_{n=1}^{\infty}$ such that $x_{n}\in X_{n}$ for every $n$ and $\left(\left\Vert x_{n}\right\Vert _{X_{n}}\right)_{n=1}^{\infty}\in c_{0}$ (see \cite{bbfp}).

Moreover, given a Banach space $X$, a family $(X_n)_{n=1}^{\infty}$ of Banach spaces is said to contain isomorphs of $X$ uniformly if there are $\delta>0$ and a family of isomorphisms into $R_n : X \to X_n$ such that $\min\{\|R_n\|,\|R_n^{-1}\|\} \leq\delta$ for every $n \in\mathbb{N}$.

\begin{corollary}
\label{c22}Let $(X_{n})_{n\in\mathbb{N}}$ be a sequence of Banach spaces that
contains a subsequence containing isomorphs of the infinite dimensional Banach
space $X$ uniformly. Then

\begin{itemize}
\item[(a)] for $1<p<\infty$, $\left(  \sum_{n}X_{n}\right)  _{p}\smallsetminus\textstyle\bigcup_{1\leq q<p}\left(  \sum_{n}X_{n}\right)  _{q}$ is $(\alpha,\mathfrak{c})$-spaceable if and only if $\alpha<\aleph_{0}$;
\item[(b)] $\left(  \sum_{n}X_{n}\right)  _{0}\smallsetminus%
{\textstyle\bigcup_{1\leq q}}
\left(  \sum_{n}X_{n}\right)  _{q}$ is $(\alpha,\mathfrak{c})$-spaceable if and only if $\alpha<\aleph_{0}$.
\end{itemize}
\end{corollary}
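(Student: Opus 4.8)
The plan is to deduce both parts directly from Corollary \ref{Corollary 2.7}, so that the equivalence with $\alpha<\aleph_{0}$ comes for free once its hypotheses are checked; the only substantive point will be the infinite codimension of the relevant union. For part (a), set $V=\left(\sum_{n}X_{n}\right)_{p}$, which is Banach since $1<p<\infty$, choose a strictly increasing sequence $(q_{r})_{r\in\mathbb{N}}$ with $q_{1}=1$, $q_{r}<p$ and $q_{r}\to p$, and put $W_{r}:=\left(\sum_{n}X_{n}\right)_{q_{r}}$. Since $\ell_{q_{r}}\subset\ell_{p}$ when $q_{r}<p$, each $W_{r}$ is a subspace of $V$; it is Banach under its own norm $\Vert\cdot\Vert_{q_{r}}$, and the elementary inequality $\Vert b\Vert_{\ell_{p}}\le\Vert b\Vert_{\ell_{q_{r}}}$ applied to $b=(\Vert x_{n}\Vert_{X_{n}})_{n}$ gives $\Vert\cdot\Vert_{V}\le\Vert\cdot\Vert_{W_{r}}$, so $C_{r}=1$ works. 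Monotonicity of $q\mapsto\left(\sum_{n}X_{n}\right)_{q}$ in the inclusion order shows that $\{W_{r}\}_{r}$ is an increasing chain, hence $\bigcup_{r}W_{r}$ is a subspace, and since $q_{r}\to p$ we have $\bigcup_{r}W_{r}=\bigcup_{1\le q<p}\left(\sum_{n}X_{n}\right)_{q}$. (One could instead norm each $W_{r}$ via Lemma \ref{t12}, but the intrinsic $q_{r}$-norm already suffices.)

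The main obstacle is the remaining hypothesis of Corollary \ref{Corollary 2.7}, namely that $\bigcup_{r}W_{r}$ has infinite codimension in $V$; equivalently, that the complement is lineable, i.e.\ there is an infinite-dimensional $F\subset V$ with $F\cap\bigcup_{r}W_{r}=\{0\}$. This is where the uniform-isomorphs hypothesis enters. Let $(X_{n_{k}})_{k}$ be the subsequence containing isomorphs of the infinite-dimensional space $X$, so in particular each $X_{n_{k}}\neq\{0\}$ and carries a unit vector. I would partition $\{n_{k}\}$ into countably many pairwise disjoint infinite blocks $\{A_{j}\}_{j\in\mathbb{N}}$, fix a scalar sequence $(a_{i})_{i}\in\ell_{p}\smallsetminus\bigcup_{q<p}\ell_{q}$ (a classical choice being $a_{i}=i^{-1/p}(\log(i+1))^{-1}$, which lies in $\ell_{p}$ precisely because $p>1$ yet in no $\ell_{q}$ with $q<p$), and for each $j$ define $z^{(j)}\in V$ supported on $A_{j}$ by setting its coordinate at the $i$-th element of $A_{j}$ equal to $a_{i}$ times the chosen unit vector there. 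Then $\Vert z^{(j)}_{n}\Vert_{X_{n}}=a_{i}$ along $A_{j}$, so $z^{(j)}\in V$ while $z^{(j)}\notin\left(\sum_{n}X_{n}\right)_{q}$ for every $q<p$.

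I would then take $F=\operatorname{span}\{z^{(j)}:j\in\mathbb{N}\}$. The disjoint supports make the $z^{(j)}$ linearly independent, so $\dim F=\aleph_{0}$; moreover, for any nonzero finite combination $w=\sum_{j\in S}c_{j}z^{(j)}$ there is $j_{0}\in S$ with $c_{j_{0}}\neq0$, and the restriction $w|_{A_{j_{0}}}=c_{j_{0}}z^{(j_{0})}$ fails to be $q_{r}$-summable for every $r$; since membership of $w$ in $W_{r}$ would force that restriction into $\ell_{q_{r}}$, we get $w\notin\bigcup_{r}W_{r}$. Hence $F\cap\bigcup_{r}W_{r}=\{0\}$, which yields infinite codimension, and Corollary \ref{Corollary 2.7} delivers that $V\smallsetminus\bigcup_{1\le q<p}\left(\sum_{n}X_{n}\right)_{q}$ is $(\alpha,\mathfrak{c})$-spaceable if and only if $\alpha<\aleph_{0}$.

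Part (b) follows the same scheme with $V=\left(\sum_{n}X_{n}\right)_{0}$ (Banach under $\Vert\cdot\Vert_{\infty}$), $q_{r}=r$, and $W_{r}=\left(\sum_{n}X_{n}\right)_{r}$: now $\ell_{r}\subset c_{0}$ gives $W_{r}\subset V$, the inequality $\Vert b\Vert_{\infty}\le\Vert b\Vert_{\ell_{r}}$ gives $\Vert\cdot\Vert_{V}\le\Vert\cdot\Vert_{W_{r}}$, and $q_{r}\to\infty$ gives $\bigcup_{r}W_{r}=\bigcup_{q\ge1}\left(\sum_{n}X_{n}\right)_{q}$. For the infinite-codimension step I would repeat the disjoint-block construction, this time with a coefficient sequence $(a_{i})_{i}\in c_{0}\smallsetminus\bigcup_{q\ge1}\ell_{q}$ (for instance $a_{i}=1/\log(i+1)$), after which Corollary \ref{Corollary 2.7} again closes the argument. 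I expect the only genuinely delicate point throughout to be the codimension/lineability step, while the Banach-ness, the norm domination, and the identification of the countable union with the continuous one are routine consequences of the monotonicity of the $\ell_{q}$-scale.
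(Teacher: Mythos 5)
Your proof is correct, and its skeleton coincides with the paper's: both reduce each part to Corollary \ref{Corollary 2.7} by observing that the scale $q\longmapsto\left(\sum_{n}X_{n}\right)_{q}$ is increasing, replacing the union over $[1,p)$ (resp.\ $[1,\infty)$) by a countable cofinal union along $q_{r}\uparrow p$ (resp.\ $q_{r}=r$), and noting that each $W_{r}$ is Banach in its intrinsic norm with $\Vert\cdot\Vert_{V}\leq\Vert\cdot\Vert_{W_{r}}$. Where you genuinely diverge is the one substantive hypothesis, the infinite codimension of $\bigcup_{r}W_{r}$: the paper dispatches it in a single line by citing \cite[Theorem 3.2]{bbfp}, whereas you prove it by hand via the disjoint-block construction --- partitioning infinitely many nonzero coordinates into infinite blocks $A_{j}$ and planting on each a copy of a fixed scalar sequence $(a_{i})\in\ell_{p}\smallsetminus\bigcup_{1\leq q<p}\ell_{q}$ (resp.\ $(a_{i})\in c_{0}\smallsetminus\bigcup_{q\geq1}\ell_{q}$) times unit vectors, so that any nonzero finite combination restricts on some block to a nonzero multiple of $(a_{i})$ and hence escapes every $W_{r}$. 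Your coefficient choices check out: $\sum_{i}i^{-1}\log^{-p}(i+1)<\infty$ exactly because $p>1$, while for $q<p$ the terms $i^{-q/p}\log^{-q}(i+1)$ eventually dominate $i^{-s}$ for any $s\in(q/p,1)$, and $1/\log(i+1)$ tends to $0$ but lies in no $\ell_{q}$. As for what each route buys: your argument is self-contained and, notably, never invokes the uniform-isomorphs hypothesis --- it only needs infinitely many $X_{n}$ to be nonzero, since one unit vector per coordinate suffices --- so you have in fact established the corollary under a strictly weaker assumption than stated; the paper's citation route is shorter and situates the codimension claim within the known spaceability results of \cite{bbfp} for these vector-valued sequence spaces, where the uniform isomorphs are needed for the stronger conclusions proved there, though, as your construction shows, not for bare infinite codimension.
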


\begin{proof}
\begin{itemize}

\item[(a)] Notice that $\left\{  \left(  \sum_{n}X_{n}\right)  _{q}\right\}_{q\in\lbrack1,p)}$ is totally ordered, and therefore $\textstyle\bigcup_{1\leq
q<p}\left(  \sum_{n}X_{n}\right)  _{q}$ is a linear subspace of $\left(
\sum_{n}X_{n}\right)  _{p}$. Let $(q_{n})_{n\in\mathbb{N}}$ a sequence of
positive integers such that
\begin{equation}
\bigcup_{1\leq q<p}\left(  \sum_{n}X_{n}\right)  _{q}=\bigcup_{n\in\mathbb{N}%
}\left(  \sum_{n}X_{n}\right)  _{q_{n}}. \label{ber}%
\end{equation}
Hence we are in the conditions of Corollary \ref{Corollary 2.7} since $\Vert
\cdot\Vert_{p}\leq\Vert\cdot\Vert_{q}$ for $q<p$ and \cite[Theorem 3.2]{bbfp}
assures that (\ref{ber}) has infinite codimension.

\item[(b)] Analogous to item (a).
\end{itemize}
\end{proof}

The following results are generalizations of the results obtained in
\cite{BCP,BDFP} and their proofs follow similar steps to Corollary \ref{c22}
together with Lemma \ref{t12}.

\begin{corollary}
Let $X$ be an infinite dimensional Banach space. Then
\[
c_{0}(X)\smallsetminus\bigcup_{1\leq p}\ell_{p}^{w}(X)
\]
is $(\alpha,\mathfrak{c})$-spaceable if and only if $\alpha<\aleph_{0}$.
\end{corollary}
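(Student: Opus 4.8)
The plan is to realize $c_{0}(X)\smallsetminus\bigcup_{1\le p}\ell_{p}^{w}(X)$ within the framework of Corollary \ref{Corollary 2.7}. Set $V=c_{0}(X)$ and, for each $p\in[1,\infty)$, put $W_{p}:=\ell_{p}^{w}(X)\cap c_{0}(X)$. Since the points of $\bigcup_{1\le p}\ell_{p}^{w}(X)$ lying outside $c_{0}(X)$ do not affect the complement, one has $c_{0}(X)\smallsetminus\bigcup_{1\le p}\ell_{p}^{w}(X)=c_{0}(X)\smallsetminus\bigcup_{1\le p}W_{p}$, so it suffices to verify the three hypotheses of Corollary \ref{Corollary 2.7} for the family $\{W_{p}\}_{p\ge 1}$ and then apply it.

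First I would supply each $W_{p}$ with a suitable Banach norm by invoking Lemma \ref{t12}. Taking $E=\ell_{p}^{w}(X)$ with $\Vert\cdot\Vert_{p,w}$ and $F=c_{0}(X)$ with $\Vert\cdot\Vert_{\infty}$, both are Banach subspaces of $X^{\mathbb{N}}$ in which norm convergence implies coordinatewise convergence: for a fixed index $m$ one has $\Vert x_{m}\Vert_{X}\le\Vert(x_{n})_{n}\Vert_{p,w}$ and $\Vert x_{m}\Vert_{X}\le\Vert(x_{n})_{n}\Vert_{\infty}$. Hence Lemma \ref{t12} guarantees that $W_{p}=\ell_{p}^{w}(X)\cap c_{0}(X)$ is a Banach space under $\Vert\cdot\Vert_{W_{p}}:=\Vert\cdot\Vert_{p,w}+\Vert\cdot\Vert_{\infty}$, and the required domination is immediate since $\Vert\cdot\Vert_{\infty}\le\Vert\cdot\Vert_{W_{p}}$, so one may take $C_{p}=1$.

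Next I would reduce the uncountable union to a countable one. Because $\ell_{q}^{w}(X)\subseteq\ell_{p}^{w}(X)$ with $\Vert\cdot\Vert_{p,w}\le\Vert\cdot\Vert_{q,w}$ whenever $1\le q\le p$, the family $\{W_{p}\}_{p\ge 1}$ is totally ordered by inclusion; consequently $\bigcup_{1\le p}W_{p}$ is a linear subspace of $c_{0}(X)$, and restricting $p$ to integers gives $\bigcup_{1\le p}W_{p}=\bigcup_{r\in\mathbb{N}}W_{r}$.

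The remaining, and \emph{main}, point is to check that $\bigcup_{r}W_{r}$ is infinite codimensional in $c_{0}(X)$. This is precisely where the earlier work enters: the spaceability of $c_{0}(X)\smallsetminus\bigcup_{1\le p}\ell_{p}^{w}(X)$ established in \cite{BCP} furnishes an infinite-dimensional closed subspace meeting $\bigcup_{r}W_{r}$ only at $0$, which forces $\bigcup_{r}W_{r}$ to have infinite codimension. With all the hypotheses of Corollary \ref{Corollary 2.7} verified, that corollary yields at once that $c_{0}(X)\smallsetminus\bigcup_{1\le p}\ell_{p}^{w}(X)$ is $(\alpha,\mathfrak{c})$-spaceable if and only if $\alpha<\aleph_{0}$. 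I expect the norm and completeness checks to be routine through Lemma \ref{t12}, the genuine input being the infinite-codimensionality, which is imported from \cite{BCP}.
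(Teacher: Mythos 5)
Your proposal is correct and matches the paper's intended argument: the paper gives no separate proof for this corollary, stating only that it ``follows similar steps to Corollary \ref{c22} together with Lemma \ref{t12},'' which is precisely what you did --- apply Corollary \ref{Corollary 2.7} to $W_{p}=\ell_{p}^{w}(X)\cap c_{0}(X)$ (the intersection being genuinely needed, since $\ell_{p}^{w}(X)\not\subset c_{0}(X)$, and Lemma \ref{t12} supplying the Banach norm $\Vert\cdot\Vert_{p,w}+\Vert\cdot\Vert_{\infty}$), reduce the union to integer indices by total ordering, and import the infinite codimensionality from the known spaceability result of \cite{BCP}. Your derivation of infinite codimension from the closed subspace furnished by \cite{BCP} is sound and plays the same role that \cite[Theorem 3.2]{bbfp} plays in the proof of Corollary \ref{c22}.
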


\begin{corollary}
Let $1<p<\infty$ and $X$ be an infinite dimensional Banach space. Then
\[
\ell_{p}(X)\smallsetminus\bigcup_{1<q<p}\ell_{q}^{w}(X)\text{,}\ \ \ \ \ \ell
_{p}^{u}(X)\smallsetminus\bigcup_{1<q<p}\ell_{q}^{w}(X)\ \ \ \ \ \text{and}%
\ \ \ \ \ \ell_{p}\left\langle X\right\rangle \smallsetminus\bigcup
_{1<q<p}\ell_{q}^{w}(X)
\]
are $(\alpha,\mathfrak{c})$-spaceable if and only if $\alpha<\aleph_{0}$. In
particular, $\ell_{p}^{w}(X)\smallsetminus%
{\textstyle\bigcup_{1<q<p}}
\ell_{q}^{w}(X)$ is $(\alpha,\mathfrak{c})$-spaceable if and only if $\alpha<\aleph_{0}$.
\end{corollary}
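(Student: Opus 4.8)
The plan is to fit each of the four complements into the setting of Corollary \ref{Corollary 2.7}, exactly as was done for Corollary \ref{c22}. Fix one of the ambient spaces and call it $V$, so that $V$ is one of $\ell_{p}(X)$, $\ell_{p}^{u}(X)$, $\ell_{p}\langle X\rangle$, or (for the final assertion) $\ell_{p}^{w}(X)$; in every case $V$ is a Banach space over $\mathbb{K}$. For each $q\in(1,p)$ I set $W_{q}:=\ell_{q}^{w}(X)\cap V$ and observe that $V\smallsetminus\bigcup_{1<q<p}\ell_{q}^{w}(X)=V\smallsetminus\bigcup_{1<q<p}W_{q}$, since intersecting each $\ell_{q}^{w}(X)$ with $V$ does not alter which elements of $V$ are deleted.

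First I would record the order structure. Because $\ell_{q}\subset\ell_{q'}$ whenever $q\leq q'$, one has $\ell_{q}^{w}(X)\subset\ell_{q'}^{w}(X)$, so $\{W_{q}\}_{q\in(1,p)}$ is a chain under inclusion and $\bigcup_{1<q<p}W_{q}$ is a subspace of $V$. Choosing any sequence $q_{n}\uparrow p$ with $q_{n}\in(1,p)$ gives $\bigcup_{1<q<p}W_{q}=\bigcup_{n\in\mathbb{N}}W_{q_{n}}$, so the union is genuinely countable, as demanded by Corollary \ref{Corollary 2.7}.

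Next I would supply the norm data. Norm convergence in $V$ and in $\ell_{q}^{w}(X)$ both force coordinatewise convergence in $X$, since the $n$-th coordinate is dominated by the ambient norm in each of these spaces (for $\ell_{p}\langle X\rangle$ this uses the standard Hahn--Banach choice of a single-term dual sequence). Hence Lemma \ref{t12} shows that $W_{q}=\ell_{q}^{w}(X)\cap V$ is a Banach space under $\Vert\cdot\Vert_{W_{q}}:=\Vert\cdot\Vert_{V}+\Vert\cdot\Vert_{q,w}$, and the domination $\Vert\cdot\Vert_{V}\leq\Vert\cdot\Vert_{W_{q}}$ holds trivially with constant $C_{q}=1$. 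In the ``in particular'' case $V=\ell_{p}^{w}(X)$ one has $\ell_{q}^{w}(X)\subset\ell_{p}^{w}(X)$, so $W_{q}=\ell_{q}^{w}(X)$ and one may instead use $\Vert\cdot\Vert_{q,w}$ together with $\Vert\cdot\Vert_{p,w}\leq\Vert\cdot\Vert_{q,w}$.

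The main obstacle is verifying the hypothesis that $\bigcup_{1<q<p}W_{q}$ has infinite codimension in $V$; this is precisely where the prior literature enters. The spaceability (indeed, already the lineability) theorems of \cite{BCP,BDFP} produce an infinite-dimensional subspace $F$ with $F\smallsetminus\{0\}\subset V\smallsetminus\bigcup_{1<q<p}\ell_{q}^{w}(X)$, whence $F\cap\bigcup_{1<q<p}W_{q}=\{0\}$, so $F$ injects into the quotient and $\bigcup_{1<q<p}W_{q}$ has infinite codimension. With all hypotheses of Corollary \ref{Corollary 2.7} in place, that corollary applies verbatim and yields that the complement is $(\alpha,\mathfrak{c})$-spaceable if and only if $\alpha<\aleph_{0}$. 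Running this argument for $V=\ell_{p}(X),\ \ell_{p}^{u}(X),\ \ell_{p}\langle X\rangle$ and then for $V=\ell_{p}^{w}(X)$ completes the proof.
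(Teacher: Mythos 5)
Your proposal is correct and follows essentially the same route the paper intends: the paper only sketches this corollary, saying the proofs ``follow similar steps to Corollary \ref{c22} together with Lemma \ref{t12},'' and you have fleshed out exactly that argument --- chain structure of $\{\ell_{q}^{w}(X)\cap V\}_{q\in(1,p)}$ giving a countable cofinal union, Lemma \ref{t12} (with the coordinate-norm domination, including the Hahn--Banach single-term functional for $\ell_{p}\langle X\rangle$) providing the Banach norms dominating $\Vert\cdot\Vert_{V}$, infinite codimension from the known spaceability results of \cite{BCP,BDFP}, and then Corollary \ref{Corollary 2.7} for the ``if and only if.'' Your direct treatment of the case $V=\ell_{p}^{w}(X)$ (where $W_{q}=\ell_{q}^{w}(X)$ with $\Vert\cdot\Vert_{p,w}\leq\Vert\cdot\Vert_{q,w}$) is also the right way to get the ``in particular'' assertion.
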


\begin{corollary}
The sets $\ell_{m(s;p)}(X)\smallsetminus\ell_{p}(X)$ and $\ell_{p}^{u}(X)\smallsetminus
\ell_{p}(X)$ are $(\alpha,\mathfrak{c})$-spaceable for every $\alpha
<\aleph_{0}$, $1\leq p\leq s<\infty$ and every infinite dimensional Banach
space $X$. In particular $\ell_{p}^{w}(X)\smallsetminus\ell_{p}(X)$ is
$(\alpha,\mathfrak{c})$-spaceable if and only if $\alpha<\aleph_{0}$.
\end{corollary}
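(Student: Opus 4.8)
The plan is to read each of these complements as the complement of a single closed subspace and then feed it into Corollary \ref{Corollary 2.7}, taking the constant family $W_{r}=W$ (so that $\bigcup_{r}W_{r}=W$). The three hypotheses I must supply are: a Banach norm on $W$, its domination of the ambient norm, and the infinite codimension of $W$. The Banach structure will come from Lemma \ref{t12}, exactly as in the proof of Corollary \ref{c22}. For the set $\ell_{p}^{u}(X)\smallsetminus\ell_{p}(X)$ I would set $V=\ell_{p}^{u}(X)$ and $W=\ell_{p}(X)\cap\ell_{p}^{u}(X)$; for $\ell_{m(s;p)}(X)\smallsetminus\ell_{p}(X)$ I would set $V=\ell_{m(s;p)}(X)$ and $W=\ell_{p}(X)\cap\ell_{m(s;p)}(X)$. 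In either case the set difference with $\ell_{p}(X)$ coincides with $V\smallsetminus W$, since deleting points outside $V$ changes nothing.

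With these choices the routine verifications run as follows. Both $V$ and $\ell_{p}(X)$ are Banach spaces of sequences in which norm convergence forces coordinatewise convergence (here $1\leq p\leq s<\infty$, so no quasi-Banach subtleties arise), so Lemma \ref{t12} makes $W$ Banach under $\Vert\cdot\Vert_{p}+\Vert\cdot\Vert_{V}$. That norm trivially dominates $\Vert\cdot\Vert_{V}$ with constant $C=1$, which is the inequality Corollary \ref{Corollary 2.7} requires. I would also record the elementary inclusion $\ell_{p}(X)\subseteq\ell_{p}^{u}(X)$: from $|\varphi(x_{n})|\leq\Vert x_{n}\Vert$ one gets $\Vert\cdot\Vert_{p,w}\leq\Vert\cdot\Vert_{p}$, and the tail condition $\lim_{k}\Vert(x_{n})_{n\geq k}\Vert_{p,w}=0$ follows from $\lim_{k}\Vert(x_{n})_{n\geq k}\Vert_{p}=0$; thus in the first case $W=\ell_{p}(X)$ outright.

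The one genuinely substantial hypothesis is that $W$ has infinite codimension in $V$, and this is where the infinite dimensionality of $X$ is used. I would import this from the literature in the same way Corollary \ref{c22} imports \cite[Theorem 3.2]{bbfp}: the spaceability results of \cite{BDFP} guarantee that $\ell_{p}(X)$ sits with infinite codimension inside $\ell_{m(s;p)}(X)$ whenever $X$ is infinite dimensional, and the corresponding statement for $\ell_{p}(X)\subseteq\ell_{p}^{u}(X)$ is supplied by \cite{BCP}. Once infinite codimension is in hand, Corollary \ref{Corollary 2.7} yields directly that $V\smallsetminus\ell_{p}(X)$ is $(\alpha,\mathfrak{c})$-spaceable for every $\alpha<\aleph_{0}$, together with the failure for $\alpha\geq\aleph_{0}$ coming from \cite[Corollary 2.3]{fprs}.

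Finally, the ``in particular'' clause is just the borderline case $s=p$. When $s=p$ one has $s^{\ast}(p)=\infty$, so $(\lambda_{n})\in\ell_{\infty}$; since $\Vert(\lambda_{n}x_{n})\Vert_{p,w}\leq\Vert(\lambda_{n})\Vert_{\infty}\Vert(x_{n})\Vert_{p,w}$ and every $(x_{n})\in\ell_{p}^{w}(X)$ factors as $x_{n}=1\cdot x_{n}$, we obtain $\ell_{m(p;p)}(X)=\ell_{p}^{w}(X)$. Applying the scheme above with $V=\ell_{p}^{w}(X)$ and $W=\ell_{p}(X)$ then delivers the full equivalence. I expect the infinite-codimension step to be the only real obstacle: the Banach intersection and the norm comparisons are mechanical, whereas producing infinitely many independent directions in the complement (or pinning down the exact form of \cite{BCP,BDFP} under the uniform-copy and infinite-dimensionality hypotheses) is what carries the weight of the argument.
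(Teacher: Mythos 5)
Your proposal is correct and follows essentially the same route as the paper, which proves this corollary precisely by the scheme you describe: the paper states that the proofs ``follow similar steps to Corollary \ref{c22} together with Lemma \ref{t12}'', i.e.\ one views each set as $V\smallsetminus W$ with $W=\ell_{p}(X)\cap V$ made Banach under the sum norm via Lemma \ref{t12}, imports the infinite codimension of $\ell_{p}(X)$ from the spaceability results of \cites{BCP,BDFP}, and applies Corollary \ref{Corollary 2.7} (with \cite[Corollary 2.3]{fprs} giving the failure for $\alpha\geq\aleph_{0}$). Your additional checks --- that $\ell_{p}(X)\subseteq\ell_{p}^{u}(X)$ so that $W=\ell_{p}(X)$ outright, and that $s=p$ gives $s^{\ast}(p)=\infty$ and hence $\ell_{m(p;p)}(X)=\ell_{p}^{w}(X)$ for the ``in particular'' clause --- are exactly the details the paper leaves implicit, and they are verified correctly.
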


\subsection{H\"{o}lder spaces}

Let $X$ and $Y$ be metric spaces. For $\alpha\in\lbrack0,1]$, we define the
H\"{o}lder's seminorm of a function $f\colon X\longrightarrow Y$ by
\[
\lbrack f]_{\alpha}=\sup\limits_{x\neq y}\frac{d_{Y}(f(x),f(y))}%
{d_{X}(x,y)^{\alpha}}\text{.}%
\]
Let $E$ be a Banach space. For $\alpha\in\lbrack0,1]$ we define the
H\"{o}lder's norm of order $\alpha$ of a function $f\colon X\longrightarrow E$
by
\[
\Vert f\Vert_{\mathcal{C}^{0,\alpha}}=\Vert f\Vert_{\mathcal{C}^{0}%
}+[f]_{\alpha},
\]
where $\Vert f\Vert_{\mathcal{C}^{0}}=\sup\limits_{x\in X}\Vert f(x)\Vert$. We
define the H\"{o}lder space of order $\alpha$ by
\[
\mathcal{C}^{0,\alpha}(X,E)=\{f\colon X\longrightarrow E;\ \Vert
f\Vert_{\mathcal{C}^{0,\alpha}}<\infty\}\text{.}%
\]
When $E=\mathbb{K}$, we will denote $\mathcal{C}^{0,\alpha}(X,E)$ by
$\mathcal{C}^{0,\alpha}(X)$.

In particular, when $\alpha\in(0,1]$, every function in $\mathcal{C}%
^{0,\alpha}(X,E)$ is continuous, while $\mathcal{C}^{0,0}(X,E)$ is the set of
all bounded functions $f\colon X\longrightarrow E$.

\begin{lemma}
\label{l6} For each $\alpha\in\lbrack0,1]$, $(\mathcal{C}^{0,\alpha
}(X,E),\Vert\cdot\Vert_{\mathcal{C}^{0,\alpha}})$ is a Banach space.
\end{lemma}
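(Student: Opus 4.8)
The plan is to show that $(\mathcal{C}^{0,\alpha}(X,E),\Vert\cdot\Vert_{\mathcal{C}^{0,\alpha}})$ is complete, since the fact that $\Vert\cdot\Vert_{\mathcal{C}^{0,\alpha}}$ is a norm follows routinely (the H\"older seminorm $[\cdot]_\alpha$ is a seminorm, $\Vert\cdot\Vert_{\mathcal{C}^0}$ is a norm on the bounded functions, and their sum vanishes only on the zero function). First I would take a Cauchy sequence $(f_k)_{k\in\mathbb{N}}$ in $\mathcal{C}^{0,\alpha}(X,E)$. Since $\Vert f_j - f_k\Vert_{\mathcal{C}^0}\leq\Vert f_j-f_k\Vert_{\mathcal{C}^{0,\alpha}}$, the sequence is Cauchy in the sup norm, and because $E$ is a Banach space, the space of bounded functions $X\to E$ with $\Vert\cdot\Vert_{\mathcal{C}^0}$ is complete; hence there is a bounded function $f$ with $\Vert f_k - f\Vert_{\mathcal{C}^0}\to 0$, in particular $f_k(x)\to f(x)$ for every $x\in X$.

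The core of the argument is to verify that $f\in\mathcal{C}^{0,\alpha}(X,E)$ and that $\Vert f_k - f\Vert_{\mathcal{C}^{0,\alpha}}\to 0$, and the key technical device is the standard trick of taking limits inside the seminorm using pointwise convergence. Given $\varepsilon>0$, choose $N$ so that $\Vert f_j - f_k\Vert_{\mathcal{C}^{0,\alpha}}<\varepsilon$ for all $j,k\geq N$; in particular $[f_j - f_k]_\alpha<\varepsilon$, which means that for every pair $x\neq y$,
\[
\frac{d_E\bigl(f_j(x)-f_k(x),\,f_j(y)-f_k(y)\bigr)}{d_X(x,y)^\alpha}<\varepsilon .
\]
Fixing $j\geq N$ and $x\neq y$, and letting $k\to\infty$, the pointwise convergence $f_k(x)\to f(x)$ and $f_k(y)\to f(y)$ together with the continuity of the norm on $E$ gives
\[
\frac{d_E\bigl(f_j(x)-f(x),\,f_j(y)-f(y)\bigr)}{d_X(x,y)^\alpha}\leq\varepsilon .
\]
Taking the supremum over all $x\neq y$ yields $[f_j - f]_\alpha\leq\varepsilon$ for all $j\geq N$.

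Combining this with the sup-norm estimate $\Vert f_j - f\Vert_{\mathcal{C}^0}\to 0$ shows that $\Vert f_j - f\Vert_{\mathcal{C}^{0,\alpha}}=\Vert f_j-f\Vert_{\mathcal{C}^0}+[f_j-f]_\alpha\to 0$, so $f_j\to f$ in $\mathcal{C}^{0,\alpha}(X,E)$. Finally, $[f]_\alpha\leq[f - f_N]_\alpha + [f_N]_\alpha<\infty$ by the triangle inequality, and $f$ is bounded, so $f\in\mathcal{C}^{0,\alpha}(X,E)$; this confirms both completeness and that the limit lies in the space. I expect the main obstacle to be purely notational rather than conceptual: one must be careful to take the limit $k\to\infty$ \emph{after} fixing the pair $(x,y)$ and the index $j$, so that the supremum defining $[\cdot]_\alpha$ is handled correctly (one cannot interchange the supremum with the limit, but passing to the limit at each fixed pair and only then taking the supremum is legitimate and gives the desired inequality).
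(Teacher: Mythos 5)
Your proof is correct. The paper in fact states Lemma \ref{l6} without proof (treating it as well known), and your argument is precisely the standard one: extract the uniform limit via completeness of the bounded $E$-valued functions, then pass to the limit $k\to\infty$ in the Cauchy estimate at each fixed pair $(x,y)$ and fixed $j$ before taking the supremum, which correctly yields $[f_j-f]_\alpha\leq\varepsilon$ and membership of $f$ in $\mathcal{C}^{0,\alpha}(X,E)$. One trivial notational remark: since $E$ is a Banach space you should write $\Vert (f_j-f_k)(x)-(f_j-f_k)(y)\Vert$ rather than $d_E$ applied to the two differences, matching the paper's definition of $[\cdot]_\alpha$ for $E$-valued maps; the substance of the argument is unaffected, and it covers the case $\alpha=0$ uniformly as well.
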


It is well-known that, for $0\leq\alpha<\beta\leq1$,
\[
\mathcal{C}^{0,\beta}(X,E)\subset\mathcal{C}^{0,\alpha}(X,E)\text{.}%
\]


The following result will\ be very useful for our applications in this framework.

\begin{theorem}
\label{t10}If $X$ is compact and $E$ is finite dimensional, then, for all
$0\leq\alpha<\beta\leq1$ the natural embedding%
\[
\mathcal{C}^{0,\beta}(X,E)\hookrightarrow{C}^{0,\alpha}(X,E)
\]
is compact.
\end{theorem}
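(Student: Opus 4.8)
The plan is to prove that the embedding sends norm-bounded subsets of $\mathcal{C}^{0,\beta}(X,E)$ to relatively compact subsets of $\mathcal{C}^{0,\alpha}(X,E)$; equivalently, I would fix an arbitrary sequence $(f_{n})_{n}$ with $\Vert f_{n}\Vert_{\mathcal{C}^{0,\beta}}\leq M$ and extract a subsequence converging in the $\mathcal{C}^{0,\alpha}$-norm. The argument has two stages: a standard Arzel\`{a}--Ascoli compactness step in the uniform norm, followed by an interpolation step that upgrades uniform convergence to convergence in the (weaker) Hölder norm of order $\alpha$.

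For the first step, the bound $\Vert f_{n}\Vert_{\mathcal{C}^{0,\beta}}\leq M$ yields both $\Vert f_{n}\Vert_{\mathcal{C}^{0}}\leq M$ (uniform boundedness) and $[f_{n}]_{\beta}\leq M$, so that $\Vert f_{n}(x)-f_{n}(y)\Vert\leq M\, d_{X}(x,y)^{\beta}$, which gives equicontinuity. Since $X$ is compact and $E$ is finite dimensional (so that bounded subsets of $E$ are relatively compact by Heine--Borel), Arzel\`{a}--Ascoli produces a subsequence, still denoted $(f_{n})$, converging uniformly to some $f\in\mathcal{C}^{0}(X,E)$. Passing to the limit in the inequality $\Vert f_{n}(x)-f_{n}(y)\Vert\leq M\, d_{X}(x,y)^{\beta}$ shows $[f]_{\beta}\leq M$, hence $f\in\mathcal{C}^{0,\beta}(X,E)$; moreover $\Vert f_{n}-f\Vert_{\mathcal{C}^{0}}\to 0$ and $[f_{n}-f]_{\beta}\leq 2M$.

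The second (and key) step upgrades this to $\mathcal{C}^{0,\alpha}$-convergence. Writing $g_{n}=f_{n}-f$, I split the difference quotient at a threshold $\delta>0$. For $d_{X}(x,y)\leq\delta$,
\[
\frac{\Vert g_{n}(x)-g_{n}(y)\Vert}{d_{X}(x,y)^{\alpha}}=\frac{\Vert g_{n}(x)-g_{n}(y)\Vert}{d_{X}(x,y)^{\beta}}\, d_{X}(x,y)^{\beta-\alpha}\leq [g_{n}]_{\beta}\,\delta^{\beta-\alpha}\leq 2M\,\delta^{\beta-\alpha},
\]
while for $d_{X}(x,y)>\delta$,
\[
\frac{\Vert g_{n}(x)-g_{n}(y)\Vert}{d_{X}(x,y)^{\alpha}}\leq\frac{2\Vert g_{n}\Vert_{\mathcal{C}^{0}}}{\delta^{\alpha}}.
\]
Taking the supremum over all pairs gives $[g_{n}]_{\alpha}\leq\max\{2M\,\delta^{\beta-\alpha},\, 2\delta^{-\alpha}\Vert g_{n}\Vert_{\mathcal{C}^{0}}\}$. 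Given $\varepsilon>0$, I first fix $\delta$ small enough that $2M\,\delta^{\beta-\alpha}<\varepsilon$ (possible precisely because $\beta-\alpha>0$), and then invoke $\Vert g_{n}\Vert_{\mathcal{C}^{0}}\to 0$ to make the second term less than $\varepsilon$ for all large $n$. Thus $[f_{n}-f]_{\alpha}\to 0$, and together with $\Vert f_{n}-f\Vert_{\mathcal{C}^{0}}\to 0$ this yields $\Vert f_{n}-f\Vert_{\mathcal{C}^{0,\alpha}}\to 0$, completing the extraction.

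The main obstacle is this last step: Arzel\`{a}--Ascoli is routine, but uniform convergence by itself gives no control on the Hölder seminorm, so the whole point is the scale-splitting at $\delta$, using the uniform $\beta$-Hölder bound $[f_{n}-f]_{\beta}\leq 2M$ on the short-distance regime and the uniform smallness of $\Vert f_{n}-f\Vert_{\mathcal{C}^{0}}$ on the long-distance regime. This is exactly where the hypotheses enter: compactness of $X$ and finite-dimensionality of $E$ furnish the uniform limit, and the strict inequality $\alpha<\beta$ forces $\delta^{\beta-\alpha}\to 0$, which is indispensable for the first term to be controlled.
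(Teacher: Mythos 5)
Your proposal is correct and follows essentially the same route as the paper: Arzel\`a--Ascoli (using compactness of $X$ and finite-dimensionality of $E$) to extract a uniform limit $f$ with $[f]_{\beta}\leq M$, followed by upgrading uniform convergence of $g_{n}=f_{n}-f$ to $\mathcal{C}^{0,\alpha}$-convergence via the uniform bound on $[g_{n}]_{\beta}$. The only cosmetic difference is in the final estimate: you split the difference quotients at a distance threshold $\delta$, whereas the paper uses the equivalent multiplicative interpolation bound $[g_{n}]_{\alpha}\leq[g_{n}]_{\beta}^{\alpha/\beta}\left(2\Vert g_{n}\Vert_{\mathcal{C}^{0}}\right)^{1-\alpha/\beta}$, which is what your splitting yields after optimizing over $\delta$.
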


\begin{proof}
Let $(f_{n})_{n\in\mathbb{N}}$ be a bounded sequence in $\mathcal{C}^{0,\beta
}(X,E)$. The boundedness of the functions in $\mathcal{C}^{0,\beta}(X,E)$
provides, simultaneously, equicontinuity and uniform boundedness for $(f_n)_{n\in\mathbb{N}}$. Hence, by
Arzela-Ascoli Theorem, up to a subsequence, we can assume that $(f_{n}%
)_{n\in\mathbb{N}}$ converges uniformly for certain $f_{\infty}$. Since%
\[
\frac{\Vert f_{\infty}(x)-f_{\infty}(y)\Vert}{d_{X}(x,y)^{\beta}}%
=\lim\limits_{n\rightarrow\infty}\frac{\Vert f_{n}(x)-f_{n}(y)\Vert}%
{d_{X}(x,y)^{\beta}}\leq\limsup\limits_{n\rightarrow\infty}[f_{n}]_{\beta
}<\infty
\]
for any $x\neq y$ in $X$, and, obviously, we have $\Vert f_{\infty}%
\Vert_{\mathcal{C}^{0}}<\infty$, it follows that $f_{\infty}\in\mathcal{C}%
^{0,\beta}(X,E)\subset\mathcal{C}^{0,\alpha}(X,E)$. Denoting $g_{n}%
=f_{n}-f_{\infty}$, we have $\Vert g_{n}\Vert_{\mathcal{C}^{0}}\overset
{n\rightarrow\infty}{\longrightarrow}0$. It remains only to verify that
$[g_{n}]_{\alpha}\overset{n\rightarrow\infty}{\longrightarrow}0$. Notice that
for $x,y\in X$, with $x\neq y$, we have%
\[
\frac{\Vert g_{n}(x)-g_{n}(y)\Vert}{d_{X}(x,y)^{\alpha}}=\left(  \frac{\Vert
g_{n}(x)-g_{n}(y)\Vert}{d_{X}(x,y)^{\beta}}\right)  ^{\frac{\alpha}{\beta}%
}\cdot\Vert g_{n}(x)-g_{n}(y)\Vert^{1-\frac{\alpha}{\beta}}\leq\lbrack
g_{n}]_{\beta}^{\frac{\alpha}{\beta}}\left(  2\Vert g_{n}\Vert_{\mathcal{C}%
^{0}}\right)  ^{1-\frac{\alpha}{\beta}}\text{,}%
\]
that is, $[g_{n}]_{\alpha}\leq\lbrack g_{n}]_{\beta}^{\frac{\alpha}{\beta}%
}\left(  2\Vert g_{n}\Vert_{\mathcal{C}^{0}}\right)  ^{1-\frac{\alpha}{\beta}%
}$. Since the sequence $([g_{n}]_{\beta})_{n\in\mathbb{N}}$ is bounded, the
result follows.
\end{proof}

The following proposition will be\ useful for our first application in the
setting of Banach spaces.

\begin{proposition}
\label{p4}Let $(V,\Vert\cdot\Vert_{V})$ be an infinite dimensional Banach
space and let $W$ be a linear subspace of $V$. If there exists a norm
$\Vert\cdot\Vert_{W}$ that makes $W$ a Banach space and makes the inclusion
$W\hookrightarrow V$ compact, then $W$ has infinite codimension.
\end{proposition}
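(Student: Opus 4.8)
The plan is to argue by contradiction: I would assume that $W$ has finite codimension in $V$ and then exploit the compactness of the inclusion to force $V$ itself to be finite dimensional, contradicting the hypothesis that $V$ is infinite dimensional. Throughout I use that a compact operator is in particular bounded, so there is a constant $C>0$ with $\|w\|_V\le C\|w\|_W$ for every $w\in W$.

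Suppose then that $\dim(V/W)=n<\infty$, and choose a finite dimensional subspace $F\subset V$ with $\dim F=n$ and $V=W\oplus F$ as an algebraic direct sum. The central idea is to package the inclusion together with this finite dimensional complement into a single operator. Concretely, I would form the Banach space $E:=W\times F$ with norm $\|(w,f)\|_E=\|w\|_W+\|f\|_F$ (completeness is immediate, since $(W,\|\cdot\|_W)$ is Banach and $F$ is finite dimensional), and define $\Phi\colon E\to V$ by $\Phi(w,f)=w+f$. The direct sum decomposition makes $\Phi$ a linear bijection, while the bound $\|w\|_V\le C\|w\|_W$ together with the equivalence of norms on the finite dimensional $F$ shows that $\Phi$ is bounded. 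Since both $E$ and $V$ are Banach spaces, the open mapping (bounded inverse) theorem then yields that $\Phi^{-1}\colon V\to E$ is bounded as well.

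The key step is to observe that $\Phi$ is compact. Indeed, writing $\Phi=A+B$ with $A(w,f)=w$ (the inclusion applied to the first coordinate) and $B(w,f)=f$, the operator $A$ factors as the compact inclusion composed with the bounded coordinate projection $E\to W$ and is therefore compact, whereas $B$ has range inside the finite dimensional $F$ and is finite rank. Hence $\Phi$, a sum of two compact operators, is compact. Composing, $\mathrm{id}_V=\Phi\circ\Phi^{-1}$ is the composition of a compact operator with a bounded one, hence compact; by Riesz's theorem this forces $V$ to be finite dimensional, the desired contradiction.

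The step I expect to demand the most care is the handling of the possible non-closedness of $W$ in $V$: a finite codimensional subspace carrying a strictly finer Banach norm need not be closed, so one cannot simply claim that the compact inclusion has closed range and invoke the corresponding finite dimensionality criterion directly. The device of enlarging the domain to $E=W\times F$ is precisely what sidesteps this obstacle, upgrading the merely partial inclusion into a genuine continuous bijection to which the open mapping theorem applies; verifying that $\Phi$ is simultaneously an isomorphism and compact is where the real content of the argument lies.
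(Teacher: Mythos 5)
Your proof is correct and follows essentially the same route as the paper's: both assume finite codimension, choose a finite-dimensional complement, equip the algebraic direct sum with the combined norm $\Vert w\Vert_{W}+\Vert f\Vert_{F}$ (your external product $E=W\times F$ with $\Phi(w,f)=w+f$ is the same construction as the paper's $(W\oplus U,\Vert\cdot\Vert_{W}+\Vert\cdot\Vert_{V})$), invoke the Open Mapping Theorem on the resulting continuous bijection onto $V$, and conclude via Riesz's theorem that $V$ would be finite dimensional. The only cosmetic difference is how compactness of this bijection is checked: the paper extracts convergent subsequences by hand, while you decompose it as a compact operator plus a finite-rank operator --- both verifications are standard and equivalent in content.
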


\begin{proof}
Assume that $W$ has finite codimension and let $U$ be a finite dimensional linear subspace of $V$
such that $W\oplus U=V$. Obviously, $W\oplus U$ is complete with the norm
$\Vert\cdot\Vert_{W}+\Vert\cdot\Vert_{V}$. If $(w_{n}+u_{n})_{n\in\mathbb{N}}$
is a bounded sequence in $(W\oplus U,\Vert\cdot\Vert_{W}+\Vert\cdot\Vert_{V}%
)$, then the sequence $(w_{n})_{n\in\mathbb{N}}$ admits a subsequence
$(w_{n})_{n\in\mathbb{N}_{1}}$ ($\mathbb{N}_{1}\subset\mathbb{N}$) that
converges in $(V,\Vert\cdot\Vert_{V})$ thanks to the compactness of
$W\hookrightarrow V$, and $(u_{n})_{n\in\mathbb{N}_{1}}$ admits a subsequence
$(u_{n})_{n\in\mathbb{N}_{2}}$ ($\mathbb{N}_{2}\subset\mathbb{N}_{1}$) that
converges in $(V,\Vert\cdot\Vert_{V})$ since $U$ is finite dimensional. Hence,
we infer that $(w_{n}+u_{n})_{n\in\mathbb{N}}$ admits a convergent subsequence
in $(V,\Vert\cdot\Vert_{V})$, and this yields\ that the inclusion%
\begin{equation}
(W\oplus U,\Vert\cdot\Vert_{W}+\Vert\cdot\Vert_{V})\hookrightarrow
(V,\Vert\cdot\Vert_{V}) \label{equa2}%
\end{equation}
is compact. Since the inclusion (\ref{equa2}) is also an isomorphism, by the
Open Mapping Theorem, we conclude that the unit ball of $V$ is compact, and
this is a contradiction since $V$ is infinite dimensional.
\end{proof}

A joint application of Proposition \ref{p4} and Corollary \ref{Corollary 2.7} yield the following result:

\begin{corollary}
Let $(V,\Vert\cdot\Vert_{V})$ be an infinite dimensional Banach space and let
$W$ be a linear subspace of $V$. If there exists a norm $\Vert\cdot\Vert_{W}$
that makes $W$ a Banach space and makes the inclusion $W\hookrightarrow V$
compact, then
\[
V\smallsetminus W\text{ is }\left(  \alpha,\mathfrak{c}\right)
\text{-spaceable if and only if }\alpha<\aleph_{0}\text{.}%
\]

\end{corollary}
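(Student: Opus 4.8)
The plan is to specialize Corollary \ref{Corollary 2.7} to a \emph{constant} family of subspaces. I would set $W_r := W$ for every $r\in\mathbb{N}$, so that $\bigcup_{r=1}^{\infty}W_r = W$. With this choice, proving the corollary reduces to checking the three hypotheses of Corollary \ref{Corollary 2.7}: that the union $\bigcup_{r=1}^{\infty}W_r = W$ is an infinite codimensional subspace of $V$; that $(W,\Vert\cdot\Vert_W)$ is Banach; and that there is a constant $C>0$ with $\Vert\cdot\Vert_V\leq C\Vert\cdot\Vert_W$ on $W$.

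The first hypothesis is exactly the conclusion of Proposition \ref{p4}: the standing assumptions of the present corollary coincide with those of that proposition, so $W$ has infinite codimension in $V$, and hence so does the union. The second hypothesis is assumed outright. For the third, I would invoke the elementary fact that a compact linear operator between normed spaces is automatically bounded, since it sends the unit ball to a relatively compact, and therefore bounded, set. Applying this to the compact inclusion $\iota\colon(W,\Vert\cdot\Vert_W)\hookrightarrow(V,\Vert\cdot\Vert_V)$ produces a constant $C>0$ such that $\Vert x\Vert_V=\Vert\iota(x)\Vert_V\leq C\Vert x\Vert_W$ for all $x\in W$; taking $C_r=C$ for each $r$ supplies the required norm domination.

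With all hypotheses of Corollary \ref{Corollary 2.7} verified, that corollary applies directly to the constant family $\{W_r\}_{r\in\mathbb{N}}$ and yields that $V\smallsetminus W$ is $(\alpha,\mathfrak{c})$-spaceable if and only if $\alpha<\aleph_0$, which is the claim (both implications being packaged in the \emph{if and only if} of Corollary \ref{Corollary 2.7}). I do not anticipate any genuine obstacle here: the only point worth recording is that compactness of the inclusion is in fact stronger than the mere continuity needed for the norm-domination hypothesis, and that the full strength of compactness is used solely through Proposition \ref{p4} to force the infinite codimension of $W$, which is the essential ingredient.
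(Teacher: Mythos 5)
Your proof is correct and takes exactly the paper's route: the paper derives this corollary as ``a joint application of Proposition \ref{p4} and Corollary \ref{Corollary 2.7}'' with no further detail, and you have simply made the routine steps explicit (taking the constant family $W_{r}=W$, using Proposition \ref{p4} for the infinite codimension, and noting that compactness of the inclusion yields the norm domination $\Vert\cdot\Vert_{V}\leq C\Vert\cdot\Vert_{W}$). Your closing remark is also accurate: compactness is used in an essential way only through Proposition \ref{p4}, while the norm inequality needs only boundedness.
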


Since $\mathcal{C}^{1}[0,1]\subset\mathcal{C}^{0,1}[0,1]\subset\mathcal{C}%
^{0,\alpha}[0,1]$ for each $\alpha\in\lbrack0,1]$, we conclude that
$\mathcal{C}^{0,\alpha}[0,1]$ is an infinite dimensional Banach space.

\begin{corollary}
If $\alpha,\beta\in\lbrack0,1]$ are such that $\alpha<\beta$, then
$\mathcal{C}^{0,\alpha}[0,1]\smallsetminus\mathcal{C}^{0,\beta}[0,1]$ is
$\left(  \alpha,\mathfrak{c}\right)  $-spaceable if and only if
$\alpha<\aleph_{0}$.
\end{corollary}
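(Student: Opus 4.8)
The plan is to recognize this statement as a direct instance of the immediately preceding corollary, applied with $V=\mathcal{C}^{0,\alpha}[0,1]$ and $W=\mathcal{C}^{0,\beta}[0,1]$. First I would record that $V$ is an infinite dimensional Banach space; this was just observed above via the chain $\mathcal{C}^{1}[0,1]\subset\mathcal{C}^{0,1}[0,1]\subset\mathcal{C}^{0,\alpha}[0,1]$. Since $0\le\alpha<\beta\le 1$, the standard inclusion $\mathcal{C}^{0,\beta}[0,1]\subset\mathcal{C}^{0,\alpha}[0,1]$ shows that $W$ is a linear subspace of $V$, so the ambient setup of the preceding corollary is in place.

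Next I would equip $W$ with the H\"older norm $\Vert\cdot\Vert_{\mathcal{C}^{0,\beta}}$; by Lemma \ref{l6} this makes $W$ a Banach space, which supplies the required auxiliary norm. It then remains only to verify that the inclusion $W\hookrightarrow V$, that is $(\mathcal{C}^{0,\beta}[0,1],\Vert\cdot\Vert_{\mathcal{C}^{0,\beta}})\hookrightarrow(\mathcal{C}^{0,\alpha}[0,1],\Vert\cdot\Vert_{\mathcal{C}^{0,\alpha}})$, is compact. This is exactly the content of Theorem \ref{t10} with the compact space $X=[0,1]$ and the finite dimensional space $E=\mathbb{K}$. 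With all three hypotheses of the preceding corollary now confirmed, namely $V$ infinite dimensional Banach, $W$ a subspace carrying a Banach norm, and $W\hookrightarrow V$ compact, the stated equivalence follows immediately.

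As for difficulty, there is essentially no obstacle at this final step: the entire substance has been front-loaded into Theorem \ref{t10} (resting on the Arzel\`{a}--Ascoli theorem together with the interpolation-type bound $[g_{n}]_{\alpha}\le[g_{n}]_{\beta}^{\alpha/\beta}(2\Vert g_{n}\Vert_{\mathcal{C}^{0}})^{1-\alpha/\beta}$) and into Proposition \ref{p4} and Corollary \ref{Corollary 2.7}, which underpin the cited corollary. The one point requiring a moment's care is purely notational: the H\"older exponents $\alpha,\beta$ are fixed parameters of the spaces, whereas the $\alpha$ appearing in ``$(\alpha,\mathfrak{c})$-spaceable if and only if $\alpha<\aleph_{0}$'' is the cardinal delivered by the preceding corollary, and I would flag this clash explicitly so that no reader conflates the H\"older index with the dimension parameter.
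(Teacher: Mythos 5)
Your proposal is correct and matches the paper's intended argument exactly: the paper states this corollary without proof immediately after the compact-inclusion corollary, precisely because it follows by taking $V=\mathcal{C}^{0,\alpha}[0,1]$ and $W=\mathcal{C}^{0,\beta}[0,1]$, with Lemma \ref{l6} supplying the Banach norm and Theorem \ref{t10} (with $X=[0,1]$, $E=\mathbb{K}$) supplying compactness of the embedding. Your remark on the notational clash between the H\"older exponent $\alpha$ and the cardinal $\alpha$ in the $(\alpha,\mathfrak{c})$-spaceability statement is also apt, as the paper itself overloads the symbol.
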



\begin{proposition}
\label{p45}For each $\alpha,\lambda\in(0,1)$ the function%
\[
f_{\lambda,\alpha}(x)=%
\begin{cases}
0\text{,} & \text{if}\ x\leq\lambda,\\
(x-\lambda)^{\alpha}\text{,} & \text{if}\ x>\lambda
\end{cases}
\]
belongs to $\mathcal{C}^{0,\alpha}[0,1]\smallsetminus%
{\textstyle\bigcup_{\alpha<\beta}}
\mathcal{C}^{0,\beta}[0,1]$.
\end{proposition}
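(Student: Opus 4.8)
The plan is to verify the two membership claims separately: that $f_{\lambda,\alpha}$ lies in $\mathcal{C}^{0,\alpha}[0,1]$, and that it fails to lie in $\mathcal{C}^{0,\beta}[0,1]$ for every $\beta\in(\alpha,1]$. Both reduce to the elementary subadditivity inequality $(a+b)^{\alpha}\le a^{\alpha}+b^{\alpha}$ for $a,b\ge 0$ and $0<\alpha\le 1$, which (writing the larger argument as the smaller plus their difference) yields $\left|s^{\alpha}-t^{\alpha}\right|\le\left|s-t\right|^{\alpha}$ for all $s,t\ge 0$.

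For the membership, I would first note that $f_{\lambda,\alpha}$ is bounded on $[0,1]$, since $0\le f_{\lambda,\alpha}\le(1-\lambda)^{\alpha}$, so $\Vert f_{\lambda,\alpha}\Vert_{\mathcal{C}^{0}}<\infty$. To control the H\"older seminorm, write $f_{\lambda,\alpha}(x)=g(x)^{\alpha}$ where $g(x)=\max\{x-\lambda,0\}$. The function $g$ is $1$-Lipschitz on $[0,1]$, so combining $\left|g(x)-g(y)\right|\le\left|x-y\right|$ with the inequality above gives, for every $x\ne y$,
\[
\left|f_{\lambda,\alpha}(x)-f_{\lambda,\alpha}(y)\right|=\left|g(x)^{\alpha}-g(y)^{\alpha}\right|\le\left|g(x)-g(y)\right|^{\alpha}\le\left|x-y\right|^{\alpha}.
\]
Hence $[f_{\lambda,\alpha}]_{\alpha}\le 1$, and therefore $f_{\lambda,\alpha}\in\mathcal{C}^{0,\alpha}[0,1]$.

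For the non-membership, fix $\beta\in(\alpha,1]$ and test the H\"older quotient along the pair $(\lambda,\lambda+\varepsilon)$ for small $\varepsilon>0$; since $\lambda<1$, these points lie in $[0,1]$. Here $f_{\lambda,\alpha}(\lambda)=0$ and $f_{\lambda,\alpha}(\lambda+\varepsilon)=\varepsilon^{\alpha}$, so
\[
\frac{\left|f_{\lambda,\alpha}(\lambda+\varepsilon)-f_{\lambda,\alpha}(\lambda)\right|}{\varepsilon^{\beta}}=\frac{\varepsilon^{\alpha}}{\varepsilon^{\beta}}=\varepsilon^{\alpha-\beta},
\]
and since $\alpha-\beta<0$ this tends to $+\infty$ as $\varepsilon\to 0^{+}$. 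Consequently $[f_{\lambda,\alpha}]_{\beta}=\infty$, so $f_{\lambda,\alpha}\notin\mathcal{C}^{0,\beta}[0,1]$; as $\beta\in(\alpha,1]$ was arbitrary, $f_{\lambda,\alpha}\notin\bigcup_{\alpha<\beta}\mathcal{C}^{0,\beta}[0,1]$.

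There is no genuine obstacle here; the one point worth emphasizing is that a single inequality carries the whole argument. The estimate $\left|s^{\alpha}-t^{\alpha}\right|\le\left|s-t\right|^{\alpha}$ delivers the upper bound for the $\alpha$-seminorm, while its sharpness at $s=t=0$, i.e.\ precisely at the kink $x=\lambda$, is exactly what forces the $\beta$-seminorm to diverge for every $\beta>\alpha$. I would also note that the divergence computation is uniform in $\beta$ and covers the endpoint $\beta=1$, so no separate treatment of the Lipschitz case is required, and that the hypothesis $\lambda\in(0,1)$ is used only to keep the test points $\lambda+\varepsilon$ inside $[0,1]$.
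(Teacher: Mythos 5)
Your proof is correct, and it diverges from the paper's in one half. The non-membership argument is essentially the paper's: both blow up the H\"older quotient at the kink $x=\lambda$ (the paper with the symmetric pair $(\lambda-t,\lambda+t)$, you with $(\lambda,\lambda+\varepsilon)$; the factor $2^{\beta}$ is immaterial), getting $[f_{\lambda,\alpha}]_{\beta}=\infty$ for every $\beta>\alpha$. For the membership, however, the paper argues by a three-way case analysis on the positions of $x,y$ relative to $\lambda$, proving the key estimate $(x-\lambda)^{\alpha}-(y-\lambda)^{\alpha}\le(x-y)^{\alpha}$ for $x,y\ge\lambda$ via the integral comparison $\int_{y-\lambda}^{x-\lambda}\alpha t^{\alpha-1}\,dt\le\int_{y-\lambda}^{x-\lambda}\alpha(t-y+\lambda)^{\alpha-1}\,dt$, and treating the mixed case $y<\lambda<x$ by a separate direct bound. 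You instead factor $f_{\lambda,\alpha}=g^{\alpha}$ with $g(x)=\max\{x-\lambda,0\}$ a $1$-Lipschitz function and invoke the elementary inequality $\lvert s^{\alpha}-t^{\alpha}\rvert\le\lvert s-t\rvert^{\alpha}$ (from subadditivity of $t\mapsto t^{\alpha}$) exactly once, which disposes of all cases uniformly. Your route is cleaner and more reusable -- the same composition trick bounds the $\alpha$-seminorm of $h\circ g$ for any $1$-Lipschitz $g$ -- while the paper's computation yields the slightly sharper conclusion $[f_{\lambda,\alpha}]_{\alpha}=1$ exactly; but only finiteness of the seminorm is needed here, and nothing downstream (in particular Theorem \ref{t11}, which reuses only the divergence computation) depends on the exact value, so nothing is lost.
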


\begin{proof}
Let $y<x$ in $[0,1]$. If $x,y\in\lbrack0,\lambda]$, we have%
\[
\frac{|f_{\lambda,\alpha}(x)-f_{\lambda,\alpha}(y)|}{|x-y|^{\alpha}}=0\text{.}%
\]
If $x,y\in\lbrack\lambda,1]$, we have
\begin{align*}
|f_{\lambda,\alpha}(x)-f_{\lambda,\alpha}(y)|  &  =(x-\lambda)^{\alpha
}-(y-\lambda)^{\alpha}\\
&  =\int_{y-\lambda}^{x-\lambda}\alpha t^{\alpha-1}dt\\
&  \leq\int_{y-\lambda}^{x-\lambda}\alpha(t-y+\lambda)^{\alpha-1}%
dt=(x-y)^{\alpha}=|x-y|^{\alpha}\text{.}%
\end{align*}
If $x\in(\lambda,1]$ and $y\in\lbrack0,\lambda)$ we have
\[
\frac{|f_{\lambda,\alpha}(x)-f_{\lambda,\alpha}(y)|}{|x-y|^{\alpha}}%
=\frac{(x-\lambda)^{\alpha}}{|x-y|^{\alpha}}<1\text{.}%
\]
Therefore, we can conclude that $[f_{\lambda,\alpha}]_{\alpha}=1$, that is,
$f_{\lambda,\alpha}\in\mathcal{C}^{0,\alpha}[0,1]$. On the other hand, for
$\beta>\alpha$ we have%
\[
\lbrack f_{\lambda,\alpha}]_{\beta}\geq\lim\limits_{t\rightarrow0^{+}}%
\frac{|f_{\lambda,\alpha}(\lambda+t)-f_{\lambda,\alpha}(\lambda-t)|}%
{|2t|^{\beta}}=\lim\limits_{t\rightarrow0^{+}}\frac{t^{\alpha}}{|2t|^{\beta}%
}=\lim\limits_{t\rightarrow0^{+}}\frac{1}{2^{\beta}\cdot t^{\beta-\alpha}%
}=\infty\text{.}%
\]
In other words, $f\notin \mathcal{C}^{0,\beta}[0,1]$, as required.
\end{proof}

\begin{theorem}
\label{t11}Given $\alpha,\lambda\in(0,1)$, let $f_{\lambda,\alpha}$ be as in
Proposition \ref{p45}. Then the set $\left\{  f_{\lambda,\alpha}:\lambda
\in(0,1)\right\}  $ is linearly independent and
\[
\operatorname{span}\left\{  f_{\lambda,\alpha}:\lambda\in(0,1)\right\}
\subset\left(  \mathcal{C}^{0,\alpha}[0,1]\smallsetminus%
{\textstyle\bigcup_{\alpha<\beta}}
\mathcal{C}^{0,\beta}[0,1]\right)  \cup\{0\}\text{.}%
\]
Hence $\mathcal{C}^{0,\alpha}[0,1]\smallsetminus\textstyle\bigcup_{\alpha<\beta}\mathcal{C}^{0,\beta}[0,1]$ is $\mathfrak{c}$-lineable.
\end{theorem}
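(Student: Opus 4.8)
The plan is to prove the two assertions of the statement separately and then read off $\mathfrak{c}$-lineability essentially for free. By Proposition \ref{p45}, each $f_{\lambda,\alpha}$ lies in $\mathcal{C}^{0,\alpha}[0,1]$, which is a vector space (Lemma \ref{l6}), so every element of $\operatorname{span}\{f_{\lambda,\alpha}:\lambda\in(0,1)\}$ automatically belongs to $\mathcal{C}^{0,\alpha}[0,1]$; the whole difficulty is thus to control a nontrivial finite combination and show it escapes every $\mathcal{C}^{0,\beta}[0,1]$ with $\beta>\alpha$. The structural fact I would exploit throughout is that $f_{\lambda,\alpha}$ vanishes identically on $[0,\lambda]$ and is nonzero precisely on $(\lambda,1]$; hence among finitely many such functions indexed by $\lambda_{1}<\cdots<\lambda_{k}$, on an interval $(\lambda_{j},\lambda_{j+1})$ only the summands of index $\leq j$ are active.

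For linear independence I would take distinct $\lambda_{1}<\cdots<\lambda_{k}$ and a relation $\sum_{i=1}^{k}c_{i}f_{\lambda_{i},\alpha}=0$. Restricting to $(\lambda_{1},\lambda_{2})$, every term with $i\geq 2$ vanishes there (since $x<\lambda_{i}$), so the relation reads $c_{1}(x-\lambda_{1})^{\alpha}=0$ on a nondegenerate interval, forcing $c_{1}=0$. Peeling off one breakpoint at a time --- passing to $(\lambda_{2},\lambda_{3})$, then $(\lambda_{3},\lambda_{4})$, and so on --- yields $c_{2}=\cdots=c_{k}=0$ by the same local argument, giving independence.

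For the inclusion of the span in the complement I would fix a nonzero combination $f=\sum_{i=1}^{k}c_{i}f_{\lambda_{i},\alpha}$ with $\lambda_{1}<\cdots<\lambda_{k}$ and let $j$ be the least index with $c_{j}\neq 0$, so that $f=\sum_{i\geq j}c_{i}f_{\lambda_{i},\alpha}$. Near $\lambda_{j}$ the function is completely transparent: for $x$ slightly below $\lambda_{j}$ every active summand still vanishes, so $f\equiv 0$ there, while for $\lambda_{j}<x<\lambda_{j+1}$ (with the convention $\lambda_{k+1}=1$) only the $j$-th term survives and $f(x)=c_{j}(x-\lambda_{j})^{\alpha}$. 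I would then mimic exactly the seminorm computation of Proposition \ref{p45}: for small $t>0$ one has $f(\lambda_{j}+t)-f(\lambda_{j}-t)=c_{j}t^{\alpha}$, whence
\[
[f]_{\beta}\ \geq\ \lim_{t\to 0^{+}}\frac{|f(\lambda_{j}+t)-f(\lambda_{j}-t)|}{|2t|^{\beta}}=\lim_{t\to 0^{+}}\frac{|c_{j}|\,t^{\alpha}}{2^{\beta}t^{\beta}}=\lim_{t\to 0^{+}}\frac{|c_{j}|}{2^{\beta}\,t^{\beta-\alpha}}=\infty,
\]
because $\beta-\alpha>0$ and $c_{j}\neq 0$. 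Hence $f\notin\mathcal{C}^{0,\beta}[0,1]$ for every $\beta>\alpha$, which establishes $\operatorname{span}\{f_{\lambda,\alpha}:\lambda\in(0,1)\}\smallsetminus\{0\}\subset\mathcal{C}^{0,\alpha}[0,1]\smallsetminus\bigcup_{\alpha<\beta}\mathcal{C}^{0,\beta}[0,1]$.

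Finally, since the index set $(0,1)$ has cardinality $\mathfrak{c}$ and $\{f_{\lambda,\alpha}:\lambda\in(0,1)\}$ is linearly independent, its span is a subspace of dimension $\mathfrak{c}$ all of whose nonzero vectors lie in the target set, which is precisely $\mathfrak{c}$-lineability. I expect the only delicate point to be the clean isolation of the behaviour of $f$ immediately to the right of the smallest active breakpoint $\lambda_{j}$ --- one must check that no other summand interferes on $(\lambda_{j},\lambda_{j+1})$ and that $f$ vanishes just to the left of $\lambda_{j}$ --- but once this local picture is secured, the seminorm estimate is verbatim the single-function case already handled in Proposition \ref{p45}.
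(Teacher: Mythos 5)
Your proposal is correct and follows essentially the same route as the paper: both hinge on the identical computation that the symmetric difference quotient of a nonzero combination at its leftmost active breakpoint $\lambda_{j}$ grows like $|c_{j}|/(2^{\beta}t^{\beta-\alpha})$, forcing $[f]_{\beta}=\infty$ for every $\beta>\alpha$. The only cosmetic difference is that you establish linear independence by a separate restriction-to-subintervals argument (peeling off breakpoints one at a time), whereas the paper extracts independence directly from the same seminorm blow-up after discarding zero coefficients; the two are interchangeable.
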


\begin{proof}
Given $\beta>\alpha$, let $a_{1},\ldots,a_{n}\in\mathbb{R}\smallsetminus\{0\}$
and $\lambda_{1},\ldots,\lambda_{n}\in(0,1)$ with $\lambda_{1}<\ldots
<\lambda_{n}$. Then we have
\begin{align*}
\left[  \sum_{i=1}^{n}a_{i}f_{\lambda_{i},\alpha}\right]  _{\beta}  &
\geq\lim_{t\rightarrow0^{+}}\frac{\left\vert \sum_{i=1}^{n}a_{i}f_{\lambda
_{i},\alpha}(\lambda_{1}+t)-\sum_{i=1}^{n}a_{i}f_{\lambda_{i},\alpha}%
(\lambda_{1}-t)\right\vert }{|2t|^{\beta}}\\
&  =\lim_{t\rightarrow0^{+}}\frac{\left\vert a_{1}f_{\lambda_{1},\alpha
}(\lambda_{1}+t)\right\vert }{|2t|^{\beta}}\\
&  =\lim_{t\rightarrow0^{+}}\frac{\left\vert a_{1}\right\vert }{2^{\beta
}t^{\beta-\alpha}}=\infty\text{.}%
\end{align*}
The previous calculation gives us,\ simultaneously, that $\{f_{\lambda,\alpha
}:\ \lambda\in(0,1)\}$ is linearly independent and
\[
\operatorname{span}\{f_{\lambda,\alpha}:\ \lambda\in(0,1)\}\subset\left(
\mathcal{C}^{0,\alpha}[0,1]\smallsetminus%
{\textstyle\bigcup_{\alpha<\beta}}
\mathcal{C}^{0,\beta}[0,1]\right)  \cup\{0\}\text{.}%
\]

\end{proof}

\begin{corollary}
Given $\alpha\in\lbrack0,1]$ the set $\mathcal{C}^{0,\alpha}%
[0,1]\smallsetminus%
{\textstyle\bigcup_{\alpha<\beta}}
\mathcal{C}^{0,\beta}[0,1]$ is $(\alpha,\mathfrak{c})$-spaceable in
$\mathcal{C}^{0,\alpha}[0,1]$ if and only if $\alpha<\aleph_{0}$.
\end{corollary}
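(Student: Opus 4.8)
The plan is to realize this as an instance of Corollary \ref{Corollary 2.7}, with $V=\mathcal{C}^{0,\alpha}[0,1]$ and the union written as a countable union of H\"older spaces. We may assume $0\le\alpha<1$, so that the union is nonempty. Fixing the H\"older exponent $\alpha$, I would choose a sequence $\beta_{r}\in(\alpha,1]$ with $\beta_{r}\downarrow\alpha$, say $\beta_{r}=\alpha+(1-\alpha)/r$, and set $W_{r}:=\mathcal{C}^{0,\beta_{r}}[0,1]$. Since $\mathcal{C}^{0,\beta_{2}}[0,1]\subset\mathcal{C}^{0,\beta_{1}}[0,1]$ whenever $\beta_{1}<\beta_{2}$, the family $\{\mathcal{C}^{0,\beta}[0,1]\}_{\alpha<\beta\le1}$ is totally ordered by inclusion; hence its union is a subspace, and because $(\beta_{r})$ decreases to $\alpha$ it is cofinal in this chain, so that
\[
\bigcup_{\alpha<\beta}\mathcal{C}^{0,\beta}[0,1]=\bigcup_{r\in\mathbb{N}}W_{r}.
\]

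Next I would check the two analytic hypotheses of Corollary \ref{Corollary 2.7}. Each $(W_{r},\Vert\cdot\Vert_{\mathcal{C}^{0,\beta_{r}}})$ is a Banach space by Lemma \ref{l6}. The norm domination $\Vert\cdot\Vert_{\mathcal{C}^{0,\alpha}}\le C_{r}\Vert\cdot\Vert_{\mathcal{C}^{0,\beta_{r}}}$ holds with $C_{r}=1$: the $\mathcal{C}^{0}$-parts of the two norms agree, and for $x\neq y$ in $[0,1]$ one has $|x-y|^{\alpha}\ge|x-y|^{\beta_{r}}$ (since $\operatorname{diam}[0,1]\le1$ and $\alpha<\beta_{r}$), whence $[f]_{\alpha}\le[f]_{\beta_{r}}$. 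This continuity is, in any case, already contained in the compact embedding of Theorem \ref{t10}.

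The step I expect to be the crux is verifying that $W:=\bigcup_{r}W_{r}$ has infinite codimension in $V$, and here I would lean on Theorem \ref{t11}. That theorem produces the $\mathfrak{c}$-dimensional subspace $S:=\operatorname{span}\{f_{\lambda,\alpha}:\lambda\in(0,1)\}$ with $S\smallsetminus\{0\}\subset\mathcal{C}^{0,\alpha}[0,1]\smallsetminus\bigcup_{\alpha<\beta}\mathcal{C}^{0,\beta}[0,1]$, i.e.\ $S\cap W=\{0\}$. A subspace whose complement, with $0$ adjoined, contains an infinite-dimensional subspace must itself be of infinite codimension: if $W$ had finite codimension $k$, the quotient map $V\to V/W$ would be injective on $S$, forcing $\dim S\le k$, a contradiction. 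Hence $\operatorname{codim}W\ge\mathfrak{c}$; in particular $W$ is infinite codimensional. (For $\alpha=0$ the same conclusion follows from the analogous, even simpler, witnesses, e.g.\ the step functions $\mathcal{X}_{(\lambda,1]}$, which are bounded but discontinuous and so lie outside every $\mathcal{C}^{0,\beta}$ with $\beta>0$.)

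With all hypotheses of Corollary \ref{Corollary 2.7} in place, it follows at once that $\mathcal{C}^{0,\alpha}[0,1]\smallsetminus\bigcup_{\alpha<\beta}\mathcal{C}^{0,\beta}[0,1]$ is $(\alpha,\mathfrak{c})$-spaceable if and only if $\alpha<\aleph_{0}$, the ``only if'' direction being exactly the part of Corollary \ref{Corollary 2.7} inherited from \cite{fprs}. This is the desired conclusion; the only genuinely content-bearing input is the infinite codimension supplied by Theorem \ref{t11}, everything else being a routine verification or a citation.
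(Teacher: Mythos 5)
Your proposal is correct and follows essentially the same route as the paper's proof: the same cofinal countable decomposition $\bigcup_{\alpha<\beta}\mathcal{C}^{0,\beta}[0,1]=\bigcup_{r\in\mathbb{N}}\mathcal{C}^{0,\beta_{r}}[0,1]$, completeness of each $\mathcal{C}^{0,\beta_{r}}[0,1]$ from Lemma \ref{l6}, domination of $\Vert\cdot\Vert_{\mathcal{C}^{0,\alpha}}$ by the stronger norms, infinite codimension of the union supplied by Theorem \ref{t11}, and the conclusion read off from Corollary \ref{Corollary 2.7}. The only (harmless) deviations are that you prove the norm inequality directly with $C_{r}=1$ where the paper instead cites the compact embedding of Theorem \ref{t10}, and that you spell out the quotient-map codimension argument and the $\alpha=0$ witnesses explicitly where the paper merely calls the endpoint cases simple, while leaving the degenerate case $\alpha=1$ (empty union) aside just as the paper does.
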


\begin{proof}
For $\alpha=0$ and $\alpha=1$ the result is simple. If $\alpha\in(0,1)$, then%
\[
\bigcup_{\alpha<\lambda}\mathcal{C}^{0,\lambda}[0,1]=\bigcup_{n\in\mathbb{N}%
}\mathcal{C}^{0,\alpha+\frac{1}{n}}[0,1]\text{.}%
\]
Lemma \ref{l6} assures that $(\mathcal{C}^{0,\alpha+\frac{1}{n}%
}[0,1],\Vert\cdot\Vert_{\mathcal{C}^{0,\alpha+\frac{1}{n}}})$ is a Banach
space while Theorem \ref{t10} assures that for each $n\in\mathbb{N}$ there is
$C_{n}>0$ such that $\Vert\cdot\Vert_{\mathcal{C}^{0,\alpha}}\leq C_{n}%
\Vert\cdot\Vert_{\mathcal{C}^{0,\alpha+\frac{1}{n}}}$. Theorem \ref{t11}
assures that $%
{\textstyle\bigcup_{\alpha<\lambda}}
\mathcal{C}^{0,\lambda}[0,1]=%
{\textstyle\bigcup_{n\in\mathbb{N}}}
\mathcal{C}^{0,\alpha+\frac{1}{n}}[0,1]$ has infinite codimension in
$\mathcal{C}^{0,\alpha}[0,1]$. Finally, the result follows from Corollary
\ref{Corollary 2.7}.
\end{proof}


\subsection{Sobolev spaces}


Let us remember the definition of the Sobolev space in one dimension. Given
$m\in\mathbb{N}$ and $1\leq p\leq\infty$ the Sobolev space $W^{m,p}(a,b)$ can
be defined as follows:
\[
W^{m,p}(a,b):=\left\{  u\in L_{p}(a,b):
\begin{array}
[c]{l}%
\text{for each }j\in\left\{  1,\ldots,m\right\}  \text{ there exists }g_{j}\in
L_{p}(a,b)\text{ with }\\%
{\displaystyle\int_{\left(  a,b\right)  }}
u\phi^{\left(  j\right)  }=\left(  -1\right)  ^{j}
{\displaystyle\int_{\left(  a,b\right)  }}
g_{j}\phi\text{ for all }\phi\in C_{c}^{\infty}(a,b)
\end{array}
\right\}
\]
where $C_{c}^{\infty}(a,b)$ denotes the space of compactly supported,
infinitely differentiable functions on $(a,b)$. The function $g_{j}$ is the
well known weak derivative of $j$-order of the function $u$ and it is denoted,
as usual, by $g_{j}\equiv u^{\left(  j\right)  }$.

The norm of $W^{m,p}(a,b)$ is given by
\[
\left\Vert u\right\Vert _{m,p}=\left\Vert u\right\Vert _{p}+\sum
\limits_{j=1}^{m}\left\Vert u^{\left(  j\right)  }\right\Vert _{p}\text{,}
\]
where $\left\Vert \cdot\right\Vert _{p}$ denotes the usual $L_{p}$-norm.

Since $W^{m,r}(a,b)\subset W^{m,s}(a,b)$,
whenever $1\leq s\leq r<\infty$, it follows that the set $%
{\textstyle\bigcup_{q\in\left(  p,\infty\right)  }}
W^{m,q}(a,b)$ is a vector space. Recently Carmona Tapia \textit{et al.} \cite{a1}
proved that, for $1\leq p<\infty$, the set
\[
W^{m,p}(a,b)\smallsetminus\bigcup_{q\in(p,\infty)}W^{m,q}(a,b)
\]
is $\mathfrak{c}$-spaceable.

As application of Corollary \ref{Corollary 2.7}, we have:

\begin{theorem}
For every $1\leq p<\infty$, the set
\[
W^{m,p}(a,b)\smallsetminus\bigcup_{q\in(p,\infty)} W^{m,q}(a,b)
\]
is $\left(  \alpha,\mathfrak{c}\right)  $-spaceable if and only if
$\alpha<\aleph_{0}$.
\end{theorem}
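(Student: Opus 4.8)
The plan is to apply Corollary \ref{Corollary 2.7} with $V = W^{m,p}(a,b)$, so that the task reduces to three things: realizing the union as a countable nested union of Banach subspaces, verifying the norm-domination inequality, and establishing infinite codimension. For the first, recall that $W^{m,r}(a,b) \subset W^{m,s}(a,b)$ whenever $1 \le s \le r < \infty$, so the family $\{W^{m,q}(a,b)\}_{q \in (p,\infty)}$ is totally ordered by inclusion and its union is a subspace. Writing $q_r := p + \tfrac{1}{r}$, I claim
\[
\bigcup_{q \in (p,\infty)} W^{m,q}(a,b) = \bigcup_{r \in \mathbb{N}} W^{m,q_r}(a,b),
\]
since every $q > p$ satisfies $q \ge q_r$ for all large $r$, whence $W^{m,q}(a,b) \subset W^{m,q_r}(a,b)$; the reverse inclusion is trivial. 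I therefore set $W_r := W^{m,q_r}(a,b)$, endowed with its own Sobolev norm $\Vert\cdot\Vert_{m,q_r}$, under which it is a Banach space.

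Next I would verify the norm-domination hypothesis. Because $(a,b)$ has finite Lebesgue measure, Proposition \ref{p1} (equivalently, a direct application of H\"{o}lder's inequality) gives $\Vert g\Vert_{p} \le (b-a)^{1/p - 1/q_r}\Vert g\Vert_{q_r}$ for every $g \in L_{q_r}(a,b)$. Applying this to $u$ and to each weak derivative $u^{(j)}$ and summing yields
\[
\Vert u\Vert_{m,p} = \Vert u\Vert_{p} + \sum_{j=1}^{m}\Vert u^{(j)}\Vert_{p} \le C_r\left(\Vert u\Vert_{q_r} + \sum_{j=1}^{m}\Vert u^{(j)}\Vert_{q_r}\right) = C_r\,\Vert u\Vert_{m,q_r},
\]
with $C_r = (b-a)^{1/p - 1/q_r} > 0$. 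Thus $\Vert\cdot\Vert_{m,p} \le C_r\,\Vert\cdot\Vert_{m,q_r}$ for every $r$, exactly as required by Corollary \ref{Corollary 2.7}.

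The remaining and principal point is the infinite codimension of $\bigcup_{q \in (p,\infty)} W^{m,q}(a,b)$ in $W^{m,p}(a,b)$. Here I would invoke the result of Carmona Tapia \textit{et al.}\ \cite{a1}, who proved that $W^{m,p}(a,b) \smallsetminus \bigcup_{q \in (p,\infty)} W^{m,q}(a,b)$ is $\mathfrak{c}$-spaceable and hence, in particular, lineable: there is an infinite-dimensional subspace $F$ with $F \smallsetminus \{0\}$ contained in the complement, so that $F \cap \bigcup_{q} W^{m,q}(a,b) = \{0\}$. The quotient map $W^{m,p}(a,b) \to W^{m,p}(a,b)\big/\bigcup_{q} W^{m,q}(a,b)$ is then injective on $F$, forcing the union to have infinite codimension. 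With all three hypotheses verified, Corollary \ref{Corollary 2.7} delivers the claimed equivalence: $W^{m,p}(a,b) \smallsetminus \bigcup_{q\in(p,\infty)} W^{m,q}(a,b)$ is $(\alpha,\mathfrak{c})$-spaceable if and only if $\alpha < \aleph_0$. The subtlety to flag is that the infinite-codimension condition is \emph{not} supplied by the machinery of Corollary \ref{Corollary 2.7} itself but is imported from \cite{a1}; a self-contained alternative would be to construct an explicit infinite linearly independent family in the complement using functions with controlled local singularities, analogous to those in Proposition \ref{p45}, but the cited lineability result renders this unnecessary.
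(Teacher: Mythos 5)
Your proof is correct and follows essentially the same route as the paper: the theorem is obtained from Corollary \ref{Corollary 2.7} with $W_{r}=W^{m,p+\frac{1}{r}}(a,b)$, using the nested countable union $\bigcup_{q\in(p,\infty)}W^{m,q}(a,b)=\bigcup_{r\in\mathbb{N}}W^{m,p+\frac{1}{r}}(a,b)$, the H\"{o}lder-type norm domination $\Vert\cdot\Vert_{m,p}\leq C_{r}\Vert\cdot\Vert_{m,p+\frac{1}{r}}$ on the bounded interval, and the $\mathfrak{c}$-spaceability result of Carmona Tapia \textit{et al.}\ \cite{a1} to supply the infinite codimension of the union. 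Your explicit verification of the infinite-codimension step (injectivity of the quotient map on the infinite-dimensional subspace furnished by \cite{a1}) is a detail the paper leaves implicit, but it is exactly the intended argument.
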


More generally, for an open subset $\Omega$ of $\mathbb{R}^{n}$, $n\geq2$,
recall that
\[
W^{m,p}(\Omega):= \left\{  u\in L_{p}(\Omega):
\begin{array}
[c]{l}%
\displaystyle\text{for each } \alpha\ \text{with } |\alpha|\leq m, \exists
g_{\alpha}\in L_{p}(\Omega) \text{ such that }\\
\displaystyle\int_{\Omega}uD^{\alpha}\phi=\left(  -1\right)  ^{|\alpha|}%
\int_{\Omega}g_{\alpha}\phi\text{ for all }\phi\in C_{c}^{\infty}(\Omega)
\end{array}
\right\}  .
\]
Above we are using the standard multi-index notation: $\alpha=(\alpha
_{1},\ldots,\alpha_{n})$, with $\alpha_{1},\ldots,\alpha_{n}\geq0$ integers,
\[
|\alpha|=\sum_{i=1}^{n} \alpha_{i}\ \ \text{and}\ \ D^{\alpha}\phi
=\frac{\partial^{|\alpha|}\phi}{\partial x^{\alpha_{1}}_{1}\cdots\partial
x^{\alpha_{n}}_{n}}.
\]
We set $D^{\alpha}u = g_{\alpha}$. The space $W^{m,p}(\Omega)$ equipped with
the norm
\[
\left\Vert u\right\Vert _{m,p}=\sum\limits_{0\leq|\alpha|\leq m}\|D^{\alpha
}u\|_{p}
\]
is a Banach space. As noted in \cite{a1}, we can obtain the spaceability of
$W^{m,p}(I)\smallsetminus\bigcup_{q>p}W^{m,q}(I)$ from the spaceability of
$W^{m,p}(I_{1})\smallsetminus\bigcup_{q>p}W^{m,q}(I_{1})$, where
$I=I_{1}\times\cdots\times I_{n}\subset\mathbb{R }^{n}$ is a cartesian product
of bounded open intervals.

With that we have the following result:

\begin{theorem}
If $I=I_{1}\times\cdots\times I_{n}$ is a cartesian product of bounded open
intervals, then $W^{m,p}(I)\smallsetminus\bigcup_{q>p}W^{m,q} (I)$ is
$(\alpha,\mathfrak{c})$-spaceable if and only if $\alpha<\aleph_{0}$.
\end{theorem}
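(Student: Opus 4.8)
The plan is to apply Corollary \ref{Corollary 2.7} with $V=W^{m,p}(I)$ and, for each $r\in\mathbb{N}$, with $W_{r}=W^{m,p+\frac{1}{r}}(I)$ carrying its natural Sobolev norm $\Vert\cdot\Vert_{m,p+\frac{1}{r}}$. Since $I$ is a product of bounded intervals it has finite Lebesgue measure, so $W^{m,s}(I)\subset W^{m,t}(I)$ whenever $t\le s$; in particular the family $\{W^{m,q}(I)\}_{q\in(p,\infty)}$ is totally ordered by inclusion, and
\[
\bigcup_{q\in(p,\infty)}W^{m,q}(I)=\bigcup_{r=1}^{\infty}W^{m,p+\frac{1}{r}}(I)=\bigcup_{r=1}^{\infty}W_{r},
\]
because every $q>p$ satisfies $p+\frac{1}{r}<q$ for $r$ large, whence $W^{m,q}(I)\subset W_{r}$. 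Thus rewriting the union as a countable increasing union is immediate, and it remains only to verify the hypotheses of Corollary \ref{Corollary 2.7}.

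First, each $W_{r}=W^{m,p+\frac{1}{r}}(I)$ is a Banach space under $\Vert\cdot\Vert_{m,p+\frac{1}{r}}$. Second, the norm domination $\Vert\cdot\Vert_{m,p}\le C_{r}\Vert\cdot\Vert_{m,p+\frac{1}{r}}$ follows from the finiteness of the measure of $I$: by H\"older's inequality (equivalently, Proposition \ref{p1}) there is a constant $\kappa_{r}=|I|^{\frac{1}{p}-\frac{1}{p+1/r}}>0$ with $\Vert g\Vert_{p}\le\kappa_{r}\Vert g\Vert_{p+\frac{1}{r}}$ for every $g\in L_{p+\frac{1}{r}}(I)$. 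Applying this to $g=D^{\alpha}u$ for each multi-index $\alpha$ with $|\alpha|\le m$ and summing gives
\[
\Vert u\Vert_{m,p}=\sum_{0\le|\alpha|\le m}\Vert D^{\alpha}u\Vert_{p}\le\kappa_{r}\sum_{0\le|\alpha|\le m}\Vert D^{\alpha}u\Vert_{p+\frac{1}{r}}=\kappa_{r}\Vert u\Vert_{m,p+\frac{1}{r}},
\]
so one may take $C_{r}=\kappa_{r}$.

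The main obstacle is the remaining hypothesis: that $\bigcup_{r}W_{r}=\bigcup_{q>p}W^{m,q}(I)$ has infinite codimension in $W^{m,p}(I)$. Here I would invoke the reduction noted just before the statement together with the one-dimensional version proved above. Writing $I=I_{1}\times\cdots\times I_{n}$, choose $g_{1},g_{2},\ldots\in W^{m,p}(I_{1})$ linearly independent with $\operatorname{span}\{g_{k}\}_{k}\smallsetminus\{0\}\subset W^{m,p}(I_{1})\smallsetminus\bigcup_{q>p}W^{m,q}(I_{1})$, which exist because the one-dimensional complement is $(\alpha,\mathfrak{c})$-spaceable, hence lineable. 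Fixing a nonzero $\psi\in C_{c}^{\infty}(I_{2}\times\cdots\times I_{n})$ and setting $G_{k}(x_{1},\ldots,x_{n})=g_{k}(x_{1})\,\psi(x_{2},\ldots,x_{n})$, a Fubini computation shows $G_{k}\in W^{m,p}(I)$; conversely, if some nontrivial combination of the $G_{k}$ belonged to $W^{m,q}(I)$ with $q>p$, then integrating out the variables $x_{2},\ldots,x_{n}$ would force the corresponding combination of the $g_{k}$ into $W^{m,q}(I_{1})$, contradicting the choice of the $g_{k}$. Hence $\operatorname{span}\{G_{k}\}_{k}$ is an infinite-dimensional subspace meeting $\bigcup_{q>p}W^{m,q}(I)$ only at $0$, which forces $\dim\bigl(W^{m,p}(I)/\bigcup_{q>p}W^{m,q}(I)\bigr)=\infty$, i.e.\ infinite codimension.

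With all hypotheses in place, Corollary \ref{Corollary 2.7} yields that $W^{m,p}(I)\smallsetminus\bigcup_{q>p}W^{m,q}(I)$ is $(\alpha,\mathfrak{c})$-spaceable if and only if $\alpha<\aleph_{0}$, completing the argument. The delicate point to get right is precisely the Fubini step in the codimension estimate, where one must check that tensoring with a fixed test function neither creates membership in a smaller $W^{m,q}$ nor destroys linear independence; the product structure of $I$ is exactly what makes the required disjoint-variable integration clean.
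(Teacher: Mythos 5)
Your proof is correct and takes essentially the same route as the paper: the same countable decomposition $\bigcup_{q>p}W^{m,q}(I)=\bigcup_{r\in\mathbb{N}}W^{m,p+\frac{1}{r}}(I)$, the same H\"older-type norm domination coming from $\lambda(I)<\infty$, and the same appeal to Corollary \ref{Corollary 2.7}. The only difference is that where the paper cites \cite{a1} for the reduction to the one-dimensional case (which is what supplies the infinite codimension of the union), you carry out that tensor-product argument explicitly; to make it airtight, phrase the ``integrating out'' step as the factorization $\Vert g^{(j)}\otimes D^{\alpha'}\psi\Vert_{q}=\Vert g^{(j)}\Vert_{L_{q}(I_{1})}\Vert D^{\alpha'}\psi\Vert_{q}$ combined with uniqueness of weak derivatives, rather than literal integration against $\psi$, which could vanish.
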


\begin{proof}
Since $I$ is bounded, we have for $p\leq q$ that $L^{q}(I)\subset L^{p}(I)$
with $\|\cdot\|_{p}\leq\lambda(I) ^{\frac{1}{p}-\frac{1}{q}}\|\cdot\|_{q}$.
The inclusions between the spaces $L^{p}(I)$ and the inequality between the
norms immediately imply the same inclusions and inequalities between the norms
of the spaces $W^{m,p}(I)$. Since $\bigcup_{q>p}W^{m,q}(I)=\bigcup
_{r\in\mathbb{N}}W^{m,p+\frac{1}{r }}(I)$, we conclude the result from Corollary \ref{Corollary 2.7}.
\end{proof}

\subsection{Functions of bounded variation}

Let $f\colon\left[  0,1\right]  \longrightarrow\mathbb{R}$ be a continuous
function. The total variation of $f$ is defined by%
\[
\operatorname{Var}\left(  f\right)  =\sup_{P\in\mathcal{P}}\sum_{i=0}%
^{n_{P}-1}\left\vert f\left(  x_{i+1}\right)  -f\left(  x_{i}\right)
\right\vert \text{,}%
\]
where the supremum is taken over the set $\mathcal{P}$ of all partitions
\[
P=\left\{  0=x_{0}<x_{1}<\cdots<x_{n_{P}}=1\right\}
\]
of $\left[  0,1\right]  $. When $\operatorname{Var}\left(  f\right)  <\infty$,
$f$ is said to be of bounded variation. The set $\mathcal{BVC}[0,1]$ of all
continuous functions of bounded variation $f\colon\lbrack0,1]\longrightarrow
\mathbb{R}$ is a vector space and it is easy to check that $\mathcal{BVC}%
[0,1]$, endowed with the norm $\Vert f\Vert=\Vert f\Vert_{\infty
}+\operatorname{Var}(f)$ is a Banach space.

A function $f\colon\left[  0,1\right]  \longrightarrow\mathbb{R}$ is said to
be absolutely continuous if for every $\varepsilon>0$ there is $\delta>0$ such
that, whenever $\left(  \left(  x_{k},y_{k}\right)  \right)  _{k=1}^{n}$ is a
finite sequence of pairwise disjoint open subintervals of $\left[  0,1\right]
$ such that%
\[
\sum_{k=1}^{n}\left(  y_{k}-x_{k}\right)  <\delta\text{,}%
\]
then%
\[
\sum_{k=1}^{n}\left\vert f\left(  y_{k}\right)  -f\left(  x_{k}\right)
\right\vert <\varepsilon\text{.}
\]
The set of all absolutely continuous functions $f\colon\left[  0,1\right]
\longrightarrow\mathbb{R}$ is indicated by $\mathcal{AC}[0,1]$. It is
well-known that $\mathcal{AC}[0,1]$ is a closed linear subspace of
$\mathcal{BVC}[0,1]$ (see \cite{Adams}) and has infinite codimension (see
\cite{Bernal}). Hence, as a consequence of Corollary \ref{Corollary 2.7}, we
have the following result:

\begin{corollary}
The set $\mathcal{BVC}[0,1]\smallsetminus\mathcal{AC}[0,1]$ is $(\alpha
,\mathfrak{c})$-spaceable in $\mathcal{BVC}[0,1]$ if and only if $\alpha<\aleph_{0}$.
\end{corollary}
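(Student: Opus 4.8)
The plan is to read this off directly from Corollary \ref{Corollary 2.7} by presenting the single target subspace $\mathcal{AC}[0,1]$ as a constant countable union of subspaces. Concretely, I would set $V=\mathcal{BVC}[0,1]$, equipped with its Banach norm $\Vert f\Vert=\Vert f\Vert_{\infty}+\operatorname{Var}(f)$, and take $W_{r}:=\mathcal{AC}[0,1]$ for every $r\in\mathbb{N}$, so that $\bigcup_{r=1}^{\infty}W_{r}=\mathcal{AC}[0,1]$. Since, as recalled just above, $\mathcal{AC}[0,1]$ is a subspace of $\mathcal{BVC}[0,1]$ of infinite codimension (see \cite{Bernal}), the hypothesis of Corollary \ref{Corollary 2.7} on the codimension of the union is immediately satisfied.

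It then remains to equip each $W_{r}$ with a suitable complete norm satisfying the required domination. Here I would exploit the fact that $\mathcal{AC}[0,1]$ is a \emph{closed} linear subspace of $\mathcal{BVC}[0,1]$ (see \cite{Adams}): taking $\Vert\cdot\Vert_{W_{r}}$ to be simply the restriction of $\Vert\cdot\Vert_{V}$ to $\mathcal{AC}[0,1]$, closedness guarantees that $(W_{r},\Vert\cdot\Vert_{W_{r}})$ is a Banach space, and the domination $\Vert\cdot\Vert_{V}\leq C_{r}\Vert\cdot\Vert_{W_{r}}$ holds trivially with $C_{r}=1$ (in fact with equality). With every hypothesis of Corollary \ref{Corollary 2.7} verified, I conclude at once that $\mathcal{BVC}[0,1]\smallsetminus\mathcal{AC}[0,1]$ is $(\alpha,\mathfrak{c})$-spaceable if and only if $\alpha<\aleph_{0}$.

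There is essentially no hard analytic step in this argument: the two facts that carry it --- that $\mathcal{AC}[0,1]$ is closed in $\mathcal{BVC}[0,1]$ and that it has infinite codimension --- are both already available in the literature and quoted in the text, so the work is purely one of fitting the situation into the template of Corollary \ref{Corollary 2.7}. The only point worth flagging is the (standard but essential) device of viewing a single target subspace as a constant family $\{W_{r}\}_{r\in\mathbb{N}}$, which is precisely what allows the single-subspace case to be subsumed under the countable-union framework; once this is noticed, the completeness of $W_{r}$ and the norm inequality come for free, since the auxiliary norm may be taken to be the ambient norm itself.
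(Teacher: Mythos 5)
Your proof is correct and matches the paper's own argument: the paper likewise derives this corollary directly from Corollary \ref{Corollary 2.7}, citing the closedness of $\mathcal{AC}[0,1]$ in $\mathcal{BVC}[0,1]$ (so the ambient norm already makes it Banach, with $C_r=1$) and its infinite codimension from the literature. Your explicit remark about viewing the single subspace as a constant family $\{W_r\}_{r\in\mathbb{N}}$ is exactly the device the paper uses implicitly.
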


We know that $\mathcal{BVC}[0,1]$ it is not closed in $\mathcal{C}[0,1]$ and,
if $\mathcal{ND}[0,1]$ is the set of nowhere differentiable continuous
functions, then $\mathcal{ND}[0,1]\subset\mathcal{C}[0,1]\smallsetminus
\mathcal{BVC}[0,1]$, since every $f\in\mathcal{BVC}[0,1]$ is differentiable in
almost everywhere. This shows us that $\mathcal{BVC}[0,1]$ has infinite
codimension in $\mathcal{C}[0,1]$ (because the lineability of $\mathcal{ND}%
[0,1]$ is known) and, obviously, we have the inequality $\Vert\cdot
\Vert_{\infty}\leq\Vert\cdot\Vert$. Hence we can infer the following result:

\begin{corollary}
The set $\mathcal{C}[0,1]\smallsetminus\mathcal{BVC}[0,1]$ is $(\alpha
,\mathfrak{c})$-spaceable in $\mathcal{C}[0,1]$ if and only if $\alpha<\aleph_{0}$.
\end{corollary}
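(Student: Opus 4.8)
The plan is to obtain this as a direct application of Corollary \ref{Corollary 2.7}, taking $V=\mathcal{C}[0,1]$ with the uniform norm $\Vert\cdot\Vert_\infty$ and the constant countable family $W_r:=\mathcal{BVC}[0,1]$ for every $r\in\mathbb{N}$, so that $\bigcup_{r=1}^{\infty}W_r=\mathcal{BVC}[0,1]$. In fact all the ingredients demanded by that corollary have already been assembled in the discussion preceding the statement, so the proof reduces to verifying its hypotheses one by one. I would first record that $\mathcal{C}[0,1]$ is a Banach space and that, as noted above, $\mathcal{BVC}[0,1]$ endowed with the norm $\Vert f\Vert=\Vert f\Vert_\infty+\operatorname{Var}(f)$ is a Banach space; this serves as $\Vert\cdot\Vert_{W_r}$ for every $r$. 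The required domination $\Vert\cdot\Vert_V\leq C_r\Vert\cdot\Vert_{W_r}$ then holds with $C_r=1$, since $\Vert f\Vert_\infty\leq\Vert f\Vert_\infty+\operatorname{Var}(f)$ for every $f\in\mathcal{BVC}[0,1]$.

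The one point that genuinely draws on the preceding remarks is that $\bigcup_{r=1}^{\infty}W_r=\mathcal{BVC}[0,1]$ has infinite codimension in $\mathcal{C}[0,1]$. This is where the set $\mathcal{ND}[0,1]$ of nowhere differentiable continuous functions enters: since every function of bounded variation is differentiable almost everywhere, one has $\mathcal{ND}[0,1]\subset\mathcal{C}[0,1]\smallsetminus\mathcal{BVC}[0,1]$, and the known lineability of $\mathcal{ND}[0,1]$ produces an infinite dimensional subspace $M$ with $M\smallsetminus\{0\}\subset\mathcal{ND}[0,1]$, whence $M\cap\mathcal{BVC}[0,1]=\{0\}$. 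The quotient map $\mathcal{C}[0,1]\to\mathcal{C}[0,1]/\mathcal{BVC}[0,1]$ is therefore injective on $M$, which forces $\mathcal{BVC}[0,1]$ to have infinite codimension.

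With every hypothesis in place, Corollary \ref{Corollary 2.7} applies verbatim and yields that $\mathcal{C}[0,1]\smallsetminus\mathcal{BVC}[0,1]$ is $(\alpha,\mathfrak{c})$-spaceable if and only if $\alpha<\aleph_{0}$. I expect no real obstacle: the verification of the Banach and norm-domination conditions is immediate, and the only mild subtlety is the infinite-codimensionality of $\mathcal{BVC}[0,1]$, which the argument via $\mathcal{ND}[0,1]$ settles cleanly.
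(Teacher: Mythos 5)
Your proposal is correct and is essentially identical to the paper's own argument: the paper also applies Corollary \ref{Corollary 2.7} with $V=\mathcal{C}[0,1]$ and $W=\mathcal{BVC}[0,1]$ carrying the Banach norm $\Vert f\Vert=\Vert f\Vert_\infty+\operatorname{Var}(f)$, notes the domination $\Vert\cdot\Vert_\infty\leq\Vert\cdot\Vert$, and derives infinite codimension from the lineability of $\mathcal{ND}[0,1]$ together with the almost-everywhere differentiability of functions of bounded variation. Your only addition is spelling out the quotient-map argument for infinite codimension, which the paper leaves implicit.
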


\subsection{Operator spaces}

Let $E_{1},\ldots,E_{m},F$ be Banach spaces. The space of all continuous
$m$-linear operators $T\colon E_{1}\times\cdots\times E_{m}\longrightarrow F$
with the norm%
\[
\left\Vert T\right\Vert =\sup\left\{  \left\Vert T\left(  x_{1},\ldots
,x_{n}\right)  \right\Vert :x_{i}\in B_{E_{i}},\,i=1,\ldots,m\right\}
\]
is denoted by $\mathcal{L}\left(  E_{1},\ldots,E_{m};F\right)  $. When
$E_{1}=\cdots=E_{m}=E$, we simply write $\mathcal{L}\left(  ^{m}E;F\right)  $.
If $m\geq2$, we say that an $m$-linear operator $T\in\mathcal{L}\left(
E_{1},\ldots,E_{m};F\right)  $ is multiple $\left(  r;s\right)  $-summing if
there exists a constant $C>0$ such that%
\begin{equation}
\left(  \sum_{j_{1}=1}^{\infty}\cdots\sum_{j_{m}=1}^{\infty}\left\Vert
T\left(  x_{j_{1}}^{\left(  1\right)  },\ldots x_{j_{m}}^{\left(  m\right)
}\right)  \right\Vert \right)  ^{1/r}\leq C\prod_{k=1}^{m}\left\Vert \left(
x_{j}^{\left(  k\right)  }\right)  _{j=1}^{\infty}\right\Vert _{s,w}
\label{jhlb}%
\end{equation}
for all $\left(  x_{j}^{\left(  k\right)  }\right)  _{j=1}^{\infty}\in\ell
_{s}^{w}\left(  E_{k}\right)  $, $k=1,\ldots,m$. The set of all multiple
$\left(  r;s\right)  $-summing operators from $E_{1}\times\cdots\times E_{m}$
to $F$ is a vector space and is denoted by $\Pi_{\operatorname{mult}\left(
r;s\right)  }\left(  E_{1},\ldots,E_{m};F\right)  $. If $\pi
_{\operatorname{mult}\left(  r;s\right)  }\left(  T\right)  $ represents the
infimum over all constants $C$ in \eqref{jhlb}, then the correspondence
$T\longmapsto\pi_{\operatorname{mult}\left(  r;s\right)  }\left(  T\right)  $
defines a complete norm for $\Pi_{\operatorname{mult}\left(  r;s\right)
}\left(  E_{1},\ldots,E_{m};F\right)  $.

In \cite{AP} the authors proved that, under certain conditions, the set
$\mathcal{L}(^{m}\ell_{p};\mathbb{K})\smallsetminus\Pi_{\operatorname{mult}%
(r;s)}(^{m}\ell_{p};\mathbb{K})$ is spaceable in $\mathcal{L}(^{m}\ell
_{p};\mathbb{K})$. Hence, since $\Pi_{\operatorname{mult}(r;s)}(^{m}\ell
_{p};\mathbb{K})$ is a Banach space with a norm that exceeds the norm of
$\mathcal{L}(^{m}\ell_{p};\mathbb{K})$, we can conclude the following result:

\begin{corollary}
Let $m\in\mathbb{N}$, $m\geq2$, $p\in\left[  2,\infty\right)$ and $1\leq s<p^{\ast}$, where $p^{\ast}$ is the Lebesgue conjugate of $p$. If
\begin{equation}\label{aaabbb}
r<\frac{2ms}{s+2m-ms},
\end{equation}
then $\mathcal{L}(^{m}\ell_{p};\mathbb{K})\smallsetminus\Pi
_{\operatorname{mult}(r;s)}(^{m}\ell_{p};\mathbb{K})$ is $(\alpha,\mathfrak{c})$-spaceable if and only if $\alpha<\aleph_{0}$.
\end{corollary}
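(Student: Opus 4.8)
The plan is to reduce the statement to a single application of Corollary~\ref{Corollary 2.7}, using the \emph{constant} family $W_r := W$ for every $r\in\mathbb{N}$, where $V := \mathcal{L}(^m\ell_p;\mathbb{K})$ and $W := \Pi_{\operatorname{mult}(r;s)}(^m\ell_p;\mathbb{K})$. First I would record the structural facts that feed into that corollary. The ambient space $V$ is a Banach space under the operator norm $\left\Vert\cdot\right\Vert$; the subspace $W$ is itself a Banach space under the summing norm $\pi_{\operatorname{mult}(r;s)}(\cdot)$; and, since every multiple $(r;s)$-summing operator satisfies $\left\Vert T\right\Vert\leq\pi_{\operatorname{mult}(r;s)}(T)$ (take single-term weakly $s$-summable sequences in \eqref{jhlb}), the domination hypothesis holds with $C_r=1$. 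For the constant family one has $\bigcup_{r=1}^{\infty}W_r=W$, so the only remaining hypothesis of Corollary~\ref{Corollary 2.7} to be checked is that $W$ is an \emph{infinite-codimensional} subspace of $V$.

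The infinite codimension is precisely where the arithmetic condition \eqref{aaabbb} enters, and I would obtain it from the spaceability result of \cite{AP}. Under the inequality $r<\frac{2ms}{s+2m-ms}$, that reference guarantees that $V\smallsetminus W$ is spaceable, i.e.\ there is a closed infinite-dimensional subspace $F\subset(V\smallsetminus W)\cup\{0\}$. Then $F\cap W=\{0\}$, so the restriction to $F$ of the quotient map $V\rightarrow V/W$ is injective; hence $V/W$ contains an infinite-dimensional subspace and $W$ has infinite codimension in $V$. (That $W$ is a genuine subspace is part of its definition as the space of multiple $(r;s)$-summing operators.)

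With every hypothesis verified, Corollary~\ref{Corollary 2.7} then applies verbatim and yields that
\[
V\smallsetminus W=\mathcal{L}(^m\ell_p;\mathbb{K})\smallsetminus\Pi_{\operatorname{mult}(r;s)}(^m\ell_p;\mathbb{K})
\]
is $(\alpha,\mathfrak{c})$-spaceable if and only if $\alpha<\aleph_{0}$; in particular the necessity of $\alpha<\aleph_{0}$ is exactly the ``only if'' half of that corollary, which ultimately rests on \cite[Corollary~2.3]{fprs}.

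The main obstacle is the infinite-codimensionality of $W$ in $V$: unlike the $L_p$ and sequence-space applications, here there is no elementary well-ordering or nesting argument available, and one must lean on the nontrivial construction of \cite{AP}, valid precisely in the range \eqref{aaabbb}, to produce an infinite-dimensional closed subspace avoiding the summing operators. Everything else --- the Banach-space structure of $\Pi_{\operatorname{mult}(r;s)}(^m\ell_p;\mathbb{K})$ under $\pi_{\operatorname{mult}(r;s)}$ and the domination of the operator norm by the summing norm --- is standard summing-operator theory and requires no new work.
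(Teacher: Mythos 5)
Your proposal is correct and follows essentially the same route as the paper, which likewise deduces the result from Corollary~\ref{Corollary 2.7} by combining the spaceability theorem of \cite{AP} (valid exactly under \eqref{aaabbb}) with the facts that $\Pi_{\operatorname{mult}(r;s)}(^{m}\ell_{p};\mathbb{K})$ is Banach under $\pi_{\operatorname{mult}(r;s)}$ and that this norm dominates the operator norm. You merely make explicit two steps the paper leaves implicit --- the constant-family reduction $W_r:=W$ and the derivation of infinite codimension from the closed subspace of \cite{AP} via injectivity of the quotient map on it --- both of which are sound.
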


The condition \eqref{aaabbb} comes from \cite{AP}, where it was necessary for the authors to prove the spaceability of the set $\mathcal{L}(^{m}\ell_{p};\mathbb{K})\smallsetminus\Pi_{\operatorname{mult}(r;s)}(^{m}\ell_{p};\mathbb{K})$.

Let $1\leq p<\infty$ and let $\Gamma$ be an arbitrary non-void set. We denote
by $\ell_{p}\left(  \Gamma\right)  $ the vector space of all functions
$f\colon\Gamma\longrightarrow\mathbb{K}$ such that $\sum_{\gamma\in\Gamma
}\left\vert f\left(  \gamma\right)  \right\vert ^{p}<\infty$, which becomes a
Banach space with the norm%
\[
\left\Vert f\right\Vert _{p}=\left(  \sum_{\gamma\in\Gamma}\left\vert f\left(
\gamma\right)  \right\vert ^{p}\right)  ^{1/p}\text{,}%
\]
where the sum is defined by%
\[
\sum_{\gamma\in\Gamma}\left\vert f\left(  \gamma\right)  \right\vert ^{p}%
=\sup\left\{  \sum_{\gamma\in L}\left\vert f\left(  \gamma\right)  \right\vert
^{p}:L\text{ is a finite subset of }\Gamma\right\}  \text{.}%
\]
If $E$ and $F$ are Banach spaces, we say that a continuous linear operator
$T\colon E\longrightarrow F$ is absolutely $p$-summing if $\left(  T\left(
x_{j}\right)  \right)  _{j=1}^{\infty}\in\ell_{p}\left(  F\right)  $ whenever
$\left(  x_{j}\right)  _{j=1}^{\infty}\in\ell_{p}^{w}\left(  E\right)  $. The
class of all absolutely $p$-summing linear operators $E\to F$ will be denoted
by $\Pi_{p}(E,F)$.

In \cite{PS} Puglisi and Seoane established, under a certain condition on the
Banach space $E$, that the set $\mathcal{L}(E,\ell_{2})\smallsetminus\Pi
_{1}(E,\ell_{2})$ of non absolutely summing linear operators is lineable.
However, in \cite{FPR} the result obtained by Puglisi and Seoane was improved
in two senses: (i) it is not necessary to assume certain hypotheses on $E$ as
long as $\mathcal{L}(E,\ell_{2})\smallsetminus\Pi_{1}(E,\ell_{2})$ is
non-void, and (ii) the space $\ell_{2}$ can be replaced by $\ell_{2}(\Gamma)$,
an infinite dimensional Hilbert space. The result obtained was as follows:

\begin{theorem}
Let $E$ be a infinite dimensional Banach space and let $\ell_{2}(\Gamma)$ be
an infinite dimensional Hilbert space. If $\mathcal{L}(E,\ell_{2}%
(\Gamma))\smallsetminus\Pi_{1}(E,\ell_{2}(\Gamma))$ is non-empty, then it is
$\operatorname*{card}(\Gamma)$-lineable.
\end{theorem}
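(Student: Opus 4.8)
The plan is to take a single witness $T\in\mathcal{L}(E,\ell_{2}(\Gamma))\smallsetminus\Pi_{1}(E,\ell_{2}(\Gamma))$ and manufacture from it an entire $\operatorname*{card}(\Gamma)$-dimensional space of non-summing operators by spreading the range of $T$ across $\operatorname*{card}(\Gamma)$ mutually orthogonal copies of $\ell_{2}(\Gamma)$ sitting inside $\ell_{2}(\Gamma)$. Since $\Gamma$ is infinite, $\operatorname*{card}(\Gamma)=\operatorname*{card}(\Gamma)\cdot\operatorname*{card}(\Gamma)$, so I would first fix a partition $\Gamma=\bigsqcup_{i\in I}\Gamma_{i}$ with $\operatorname*{card}(I)=\operatorname*{card}(\Gamma)$ and $\operatorname*{card}(\Gamma_{i})=\operatorname*{card}(\Gamma)$ for every $i$. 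For each $i\in I$, a bijection $\Gamma\to\Gamma_{i}$ induces an isometric embedding $U_{i}\colon\ell_{2}(\Gamma)\to\ell_{2}(\Gamma)$ whose image is the closed subspace $\ell_{2}(\Gamma_{i})$; I let $P_{i}$ denote the orthogonal projection of $\ell_{2}(\Gamma)$ onto $\ell_{2}(\Gamma_{i})$, so that $P_{i}U_{i}=U_{i}$ and $P_{i}U_{j}=0$ for $i\neq j$. I then define $T_{i}:=U_{i}\circ T\in\mathcal{L}(E,\ell_{2}(\Gamma))$.

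The key mechanism is the operator-ideal property of $\Pi_{1}$: pre- and post-composition of a $1$-summing operator with bounded operators again yields a $1$-summing operator. I would exploit this through the norm-one ``retraction'' $V_{i}:=U_{i}^{-1}P_{i}\colon\ell_{2}(\Gamma)\to\ell_{2}(\Gamma)$, where $U_{i}^{-1}$ is the inverse isometry on $\ell_{2}(\Gamma_{i})$. It satisfies $V_{i}T_{i}=U_{i}^{-1}P_{i}U_{i}T=T$ and, crucially, $V_{i}T_{j}=U_{i}^{-1}P_{i}U_{j}T=0$ for $i\neq j$. Now take any nontrivial finite combination $S=\sum_{k=1}^{n}a_{k}T_{i_{k}}$ with distinct indices $i_{1},\dots,i_{n}$ and some $a_{k_{0}}\neq 0$. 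Applying $V_{i_{k_{0}}}$ annihilates every term except the $k_{0}$-th and yields $V_{i_{k_{0}}}S=a_{k_{0}}T$. If $S$ belonged to $\Pi_{1}$, the ideal property would force $a_{k_{0}}T=V_{i_{k_{0}}}S\in\Pi_{1}$, hence $T\in\Pi_{1}$, contradicting the choice of $T$. Therefore $S\notin\Pi_{1}$; in particular $S\neq 0$, which simultaneously proves that $\{T_{i}\}_{i\in I}$ is linearly independent and that every nonzero element of its span fails to be absolutely summing.

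Putting these together, $F:=\operatorname{span}\{T_{i}:i\in I\}$ is a subspace of $\mathcal{L}(E,\ell_{2}(\Gamma))$ with $\dim F=\operatorname*{card}(I)=\operatorname*{card}(\Gamma)$ and $F\smallsetminus\{0\}\subset\mathcal{L}(E,\ell_{2}(\Gamma))\smallsetminus\Pi_{1}(E,\ell_{2}(\Gamma))$, which is exactly $\operatorname*{card}(\Gamma)$-lineability. The main obstacle, and the only place requiring genuine care, is the transfer of non-summability along the isometric embeddings: rather than estimating summing norms directly, the argument hinges on recovering $T$ from $T_{i}$ via the bounded left inverse $V_{i}$, so that the ideal property of $\Pi_{1}$ does all the work, while the orthogonality relation $P_{i}U_{j}=0$ for $i\neq j$ is what lets the projections isolate a single summand at a time. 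I would also remark that the same scheme goes through verbatim with $\ell_{p}(\Gamma)$ and $\Pi_{p}$ in place of $\ell_{2}(\Gamma)$ and $\Pi_{1}$, since only the ideal property and the existence of norm-one projections onto the blocks are used.
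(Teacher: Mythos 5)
Your proof is correct: the retraction identity $V_{i}T_{j}=\delta_{ij}T$ combined with the ideal property of $\Pi_{1}$ cleanly yields both the linear independence of $\{T_{i}\}_{i\in I}$ and the failure of summability for every nonzero element of the span, and the cardinality bookkeeping via $\operatorname{card}(\Gamma)=\operatorname{card}(\Gamma)\cdot\operatorname{card}(\Gamma)$ is sound. Note that the paper itself gives no proof of this statement --- it is quoted as a known theorem from \cite{FPR} --- and your argument (spreading a single witness $T$ across $\operatorname{card}(\Gamma)$ mutually orthogonal copies $\ell_{2}(\Gamma_{i})$ inside $\ell_{2}(\Gamma)$ and recovering $T$ by a norm-one left inverse) is essentially the standard argument used there, so no genuinely different route is involved.
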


With the help of our main result we can improve the above result inserting it
in the framework of $(\alpha,\beta)$-spaceability:

\begin{corollary}
Let $E$ be an infinite dimensional Banach space and let $\ell_{2}(\Gamma)$ an
infinite dimensional Hilbert space. If $\mathcal{L}(E,\ell_{2}(\Gamma
))\smallsetminus\Pi_{1}(E,\ell_{2}(\Gamma))$ is non-empty, then it is
$(\alpha,\mathfrak{c})$-spaceable if and only if $\alpha<\aleph_{0}$.
\end{corollary}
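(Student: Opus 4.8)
The plan is to realize this statement as a direct application of Corollary \ref{Corollary 2.7}, with $V=\mathcal{L}(E,\ell_{2}(\Gamma))$ endowed with the operator norm and with the single subspace $W:=\Pi_{1}(E,\ell_{2}(\Gamma))$; concretely, I would set $W_{r}:=W$ for every $r\in\mathbb{N}$, so that $\bigcup_{r=1}^{\infty}W_{r}=W$. Once the hypotheses of Corollary \ref{Corollary 2.7} are checked, its conclusion reads exactly as the desired equivalence, and in particular the \emph{only if} direction is already supplied by it through \cite[Corollary 2.3]{fprs}. It therefore remains only to verify the two structural requirements: that $W$ carries a complete norm dominating $\Vert\cdot\Vert_{V}$, and that $W$ has infinite codimension in $V$.

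For the first requirement I would invoke the standard theory of absolutely summing operators: $(\Pi_{1}(E,\ell_{2}(\Gamma)),\pi_{1})$ is a Banach space, where $\pi_{1}(\cdot)$ denotes the $1$-summing norm, and one always has the domination $\Vert T\Vert\leq\pi_{1}(T)$. Thus, taking $C_{r}=1$ for every $r$, the inequality $\Vert\cdot\Vert_{V}\leq C_{r}\,\pi_{1}(\cdot)$ holds on $W$ and $(W,\pi_{1})$ is complete, exactly as demanded of each $W_{r}$ in Corollary \ref{Corollary 2.7}.

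The key step, and the place where the nonemptiness hypothesis enters, will be showing that $W=\Pi_{1}(E,\ell_{2}(\Gamma))$ has infinite codimension in $\mathcal{L}(E,\ell_{2}(\Gamma))$. Here I would appeal to the result of \cite{FPR} stated just above: since $\mathcal{L}(E,\ell_{2}(\Gamma))\smallsetminus\Pi_{1}(E,\ell_{2}(\Gamma))$ is assumed non-empty, that theorem gives its $\operatorname{card}(\Gamma)$-lineability, hence in particular its lineability. Consequently there is an infinite-dimensional subspace $F\subset\mathcal{L}(E,\ell_{2}(\Gamma))$ with $F\smallsetminus\{0\}\subset\mathcal{L}(E,\ell_{2}(\Gamma))\smallsetminus\Pi_{1}(E,\ell_{2}(\Gamma))$, i.e.\ $F\cap\Pi_{1}(E,\ell_{2}(\Gamma))=\{0\}$. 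Hence the quotient $\mathcal{L}(E,\ell_{2}(\Gamma))/\Pi_{1}(E,\ell_{2}(\Gamma))$ contains an isomorphic copy of the infinite-dimensional space $F$, so $W$ has infinite codimension. With both hypotheses in place, Corollary \ref{Corollary 2.7} yields that $\mathcal{L}(E,\ell_{2}(\Gamma))\smallsetminus\Pi_{1}(E,\ell_{2}(\Gamma))$ is $(\alpha,\mathfrak{c})$-spaceable if and only if $\alpha<\aleph_{0}$. I expect this last step — establishing infinite codimension — to be the only genuinely nontrivial input, and it is handled entirely by the cited lineability result; the remaining verifications are routine operator-ideal facts.
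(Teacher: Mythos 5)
Your proposal is correct and follows exactly the route the paper intends (and uses for the neighboring operator-space corollaries): take $V=\mathcal{L}(E,\ell_{2}(\Gamma))$, note that $(\Pi_{1}(E,\ell_{2}(\Gamma)),\pi_{1})$ is Banach with $\Vert\cdot\Vert\leq\pi_{1}(\cdot)$, obtain infinite codimension from the $\operatorname{card}(\Gamma)$-lineability result of \cite{FPR} applied under the nonemptiness hypothesis, and invoke Corollary \ref{Corollary 2.7}, whose negative direction for $\alpha\geq\aleph_{0}$ comes from \cite[Corollary 2.3]{fprs}. The paper leaves this argument implicit, and your write-up supplies precisely the verifications it takes for granted, so there is nothing to correct.
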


We say that a normed vector space $X$ is finitely representable in a normed
linear space $Y$ if, for each finite-dimensional subspace $X_{n}$ of $X$ and
each number $\lambda>1$, there is an isomorphism $T_{n}$ of $X_{n}$ into $Y$
for which
\[
\lambda^{-1}\left\Vert x\right\Vert \leq\left\Vert T_{n}\left(  x\right)
\right\Vert \leq\lambda\left\Vert x\right\Vert \text{ \ \ \ \ if }x\in
X_{n}\text{.}%
\]
A Banach space $E$ is called super-reflexive if it has the property that no
non-reflexive Banach space is finitely representable in $E$.

In \cite{Kitson} Kitson and Timoney established a remarkable spaceability
result in Fr\'{e}chet spaces with several applications, among them, the
spaceability of the set $\mathcal{K}(E,F)\smallsetminus\bigcup_{1\leq
p<\infty}\Pi_{p}(E,F)$, imposing only that $E$ and $F$ are infinite
dimensional Banach spaces with $E$ super-reflexive. Above, as usual,
$\mathcal{K}(E,F)$ denotes the closed ideal of compact operators in
$\mathcal{L}(E,F)$. The result of Kitson and Timoney allows us to state the
following assertation:

\begin{corollary}
Let $E$ and $F$ be infinite dimensional Banach spaces with $E$
super-reflexive. Then $\mathcal{K}(E,F)\smallsetminus\bigcup_{1\leq p<\infty
}\Pi_{p}(E,F)$ is $(\alpha,\mathfrak{c})$-spaceable in $\mathcal{K}(E,F)$ if and only if $\alpha<\aleph_{0}$.
\end{corollary}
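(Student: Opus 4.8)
The plan is to apply Corollary \ref{Corollary 2.7} with $V=\mathcal{K}(E,F)$ and a suitable countable increasing family of subspaces whose union is $\bigcup_{1\le p<\infty}\Pi_p(E,F)\cap\mathcal{K}(E,F)$. First I would invoke the inclusion theorem for absolutely summing operators, namely that $1\le p\le q<\infty$ implies $\Pi_p(E,F)\subseteq\Pi_q(E,F)$ with $\pi_q(\cdot)\le\pi_p(\cdot)$, in order to collapse the continuum union to a countable one. Setting $W_r:=\Pi_r(E,F)\cap\mathcal{K}(E,F)$ for $r\in\mathbb{N}$, the family $\{W_r\}_{r\in\mathbb{N}}$ is increasing, so $\bigcup_r W_r$ is a subspace and $\bigcup_{1\le p<\infty}\Pi_p(E,F)\cap\mathcal{K}(E,F)=\bigcup_{r\in\mathbb{N}}W_r$. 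Consequently $\mathcal{K}(E,F)\smallsetminus\bigcup_{1\le p<\infty}\Pi_p(E,F)=\mathcal{K}(E,F)\smallsetminus\bigcup_r W_r$, which is the shape required by Corollary \ref{Corollary 2.7}.

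Next I would equip each $W_r$ with the $r$-summing norm $\pi_r$ and check the two normed hypotheses. The domination $\Vert T\Vert\le\pi_r(T)$ holds for every $r$-summing operator, so one may take $C_r=1$. For completeness, let $(T_n)$ be $\pi_r$-Cauchy in $W_r$; since $(\Pi_r(E,F),\pi_r)$ is Banach, $T_n\to T$ in $\pi_r$ for some $T\in\Pi_r(E,F)$, and because $\pi_r$-convergence forces operator-norm convergence while $\mathcal{K}(E,F)$ is closed in $(\mathcal{L}(E,F),\Vert\cdot\Vert)$, the limit $T$ is compact, i.e. $T\in W_r$. Thus $(W_r,\pi_r)$ is Banach, and both normed conditions of Corollary \ref{Corollary 2.7} are met.

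The remaining and genuinely substantial hypothesis is that $\bigcup_r W_r$ has infinite codimension in $\mathcal{K}(E,F)$, and this is exactly where the super-reflexivity of $E$ enters. The Kitson--Timoney spaceability theorem \cite{Kitson}, valid for infinite dimensional Banach spaces $E,F$ with $E$ super-reflexive, guarantees that $\mathcal{K}(E,F)\smallsetminus\bigcup_{1\le p<\infty}\Pi_p(E,F)$ is spaceable. Spaceability produces an infinite dimensional closed subspace $S\subset\mathcal{K}(E,F)$ with $S\cap\bigcup_r W_r=\{0\}$, so $S$ embeds into the quotient $\mathcal{K}(E,F)/\bigcup_r W_r$, forcing $\bigcup_r W_r$ to have infinite codimension. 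With every hypothesis of Corollary \ref{Corollary 2.7} in place, that corollary delivers precisely that $\mathcal{K}(E,F)\smallsetminus\bigcup_{1\le p<\infty}\Pi_p(E,F)$ is $(\alpha,\mathfrak{c})$-spaceable if and only if $\alpha<\aleph_0$.

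The main obstacle is the infinite-codimension condition, which I expect to discharge entirely by citing the Kitson--Timoney result rather than re-deriving it, since their construction already encodes the super-reflexivity hypothesis. The completeness of the intersection $W_r$ and the norm domination are routine, and once the hypotheses are confirmed the two implications of the ``if and only if'' are produced simultaneously by Corollary \ref{Corollary 2.7}, with the necessity of $\alpha<\aleph_0$ ultimately tracing back to \cite[Corollary 2.3]{fprs}.
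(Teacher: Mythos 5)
Your proposal is correct and follows essentially the same route the paper intends: the paper states this corollary without a written proof, deducing it from the Kitson--Timoney spaceability result together with Corollary \ref{Corollary 2.7}, which is exactly your argument (countable reduction via the inclusion theorem $\Pi_p\subseteq\Pi_q$ with $\pi_q\le\pi_p$, completeness of $W_r=\Pi_r(E,F)\cap\mathcal{K}(E,F)$ under $\pi_r$ using closedness of $\mathcal{K}(E,F)$ in operator norm, infinite codimension extracted from the Kitson--Timoney spaceability, and the necessity of $\alpha<\aleph_0$ from \cite[Corollary 2.3]{fprs}). You have in fact supplied the routine verifications the paper leaves implicit, and all of them are sound.
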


\end{document}